\newtheorem*{theorem*}{Theorem}
 \newtheorem{corollary}{Corollary} 
 \newtheorem*{corollary*}{Corollary}
\newtheorem{lemma}{Lemma}
\newtheorem*{lemma*}{Lemma}
\newtheorem{proposition}{Proposition}
\newtheorem*{proposition*}{Proposition}
\newtheorem*{claim*}{Claim}
\theoremstyle{definition}
\newtheorem*{definition*}{Definition}
\newtheorem*{example*}{Example}
\newtheorem*{question*}{Question}
\newtheorem*{problem*}{Problem}
\newtheorem*{remark*}{Remark}
\def\R{\mathbb R}
\def\C{\mathbb C}
\def\N{\mathbb N}
\def\Z{\mathbb Z}
\def\fg{\mathfrak{g}}
\def\fk{\mathfrak{k}}
\def\ft{\mathfrak{t}}
\def\fh{\mathfrak{h}}
\def\fl{\mathfrak{l}}
\def\fp{\mathfrak{p}}
\def\g{\boldsymbol{\mathfrak{g}}}
\def\k{\boldsymbol{\mathfrak{k}}}
\def\p{\boldsymbol{\mathfrak{p}}}
\def\t{\boldsymbol{\mathfrak{t}}}
\def\l{\boldsymbol{\mathfrak{l}}}
\title{Dirac operators for algebraic families}
\author{Spyridon Afentoulidis-Almpanis \& Eyal Subag
}
\newcommand{\Addresses}{{% additional braces for segregating \footnotesize
		\bigskip
		\footnotesize
		
		Spyridon~Afentoulidis-Almpanis \par\nopagebreak\textsc{Dept. of Mathematics, Bar-Ilan University, Ramat-Gan, 5290002 Israel}\par\nopagebreak
		\textit{E-mail address}: \texttt{spyridon.almpanis@biu.ac.il}
		
		\medskip
		
Eyal~Subag \par\nopagebreak\textsc{Dept. of Mathematics, Bar-Ilan University, Ramat-Gan, 5290002 Israel}\par\nopagebreak
\textit{E-mail address}: \texttt{eyal.subag@biu.ac.il}
		
		\medskip

}}
\date{\today}
\begin{document}

\maketitle

 \begin{abstract} We introduce algebraic families of Dirac operators for the deformation family (and other related families) associated with a real reductive Lie group that interpolates the reductive group and the corresponding Cartan motion group. We prove  Vogan's conjecture in this setting, relating the infinitesimal character of an algebraic family of  Harish-Chandra modules and its Dirac cohomology. %We define Dirac operators for Cartan motion groups and  determine their Dirac series. 
	\end{abstract}

\tableofcontents

\section{Introduction}

Algebraic Dirac operators were introduced by Vogan as a tool to study representations of real reductive Lie groups. 

The deformation family of Harish-Chandra pairs associated with a real reductive Lie group $G(\mathbb{R})$ interpolates $G(\mathbb{R})$ and its Cartan motion group.

In this paper, we explore algebraic Dirac operators in the context of algebraic families of Harish-Chandra modules for the deformation family (and other related families) of a real reductive Lie group. We begin by providing the necessary background and context for both algebraic families and Dirac operators, followed by a summary of our main contributions and results.

\subsection{Motivation and context}

Algebraic families of Harish-Chandra pairs and their modules were introduced by Bernstein et al.~\cite{eyalbern,MR4123111}. Their main goal was to provide an algebraic framework for studying limits of Lie groups and their representations, commonly referred to as \emph{contractions} in the physics literature (e.g., \cite{MR3077834,MR779059,MR1275599,MR55352,MR2942592}). In particular, they constructed two one-parameter algebraic families of Harish-Chandra pairs interpolating between a real reductive Lie group $G(\mathbb{R})$ and its Cartan motion group $G(\mathbb{R})_0$. One of these leads to the \emph{contraction family}, while the other gives the \emph{deformation family} associated with $G(\mathbb{R})$.

Both families have significant applications. For instance, in the case of $G(\mathbb{R}) = SO(3,1)$, the contraction family and its modules play a central role in the quantum mechanics of the hydrogen atom system \cite{MR4460278,MR3827131}. On the other hand, the deformation family has been used in the study of the \emph{Mackey-Higson bijection}, which we now briefly recall.

Inspired by certain contractions in physics, Mackey observed an analogy between many unitary irreducible representations of a non-compact semisimple Lie group $G(\mathbb{R})$ and those of its Cartan motion group $G(\mathbb{R})_0$ \cite{MR409726}. Decades later, in the case of complex semisimple groups Higson extended this analogy to a bijection, first between the tempered duals \cite{MR2391803},
\[
\mathcal{MH} : \widehat{G(\mathbb{R})}_{\text{temp}} \xrightarrow{\sim} \widehat{G(\mathbb{R})_0}_{\text{temp}},
\]
and then between the admissible duals \cite{MR2815133},
\[
\mathcal{MH} : \widehat{G(\mathbb{R})}_{\text{adms}} \xrightarrow{\sim} \widehat{G(\mathbb{R})_0}_{\text{adms}}.
\]
This bijection played a key role in Higson's new proof of the Connes-Kasparov conjecture for complex semisimple Lie groups \cite{MR2391803}. Subsequently, Afgoustidis very elegantly extended these results to the setting of real reductive Lie groups \cite{MR4079418,MR4400734}. For a different approach using $\mathcal{D}$-modules, see \cite{MR4542720}.

Underlying the works on the Mackey-Higson bijection is a  family of Lie groups $\{G_t\}_{t \in [0,1]}$ satisfying
\[
G_t \cong
\begin{cases}
G(\mathbb{R}) & t \neq 0, \\
G(\mathbb{R})_0 & t = 0.
\end{cases}
\]
This family is a real continuous  counterpart of the algebraic deformation family of Harish-Chandra pairs that was mentioned above.

%In fact the algebraic deformation family was used directly used in the context of  the Mackey-Higson bijection. 
In \cite{eyallietheory}, algebraic families of Harish-Chandra modules for the deformation family were used to formulate a conjectural algebraic characterization of the Mackey-Higson bijection which was verified in the case of $G(\mathbb{R}) = SL_2(\mathbb{R})$.

Following Afgoustidis, we denote by $\widehat{G(\mathbb{R})}_{\text{tempiric}}$ the set of equivalence classes of \emph{tempiric} representations of $G(\mathbb{R})$, i.e., tempered irreducible representations with real infinitesimal character. 

 Vogan showed that a tempiric representation has a unique lowest $K(\mathbb{R})$-type, where  $K(\R)$ is  a maximal compact subgroups of $G(\R)$.  He also proven that the map $L_K$ sending a tempiric representation to its unique minimal $K(\mathbb{R})$-type defines a bijection onto $\widehat{K(\mathbb{R})}$ \cite{MR2401817}.  Under the natural inclusion $\widehat{K(\mathbb{R})} \hookrightarrow \widehat{G(\mathbb{R})_0}_{\text{temp}}$, the Mackey-Higson bijection extends Vogan's:
\[
\xymatrix{
\widehat{G(\mathbb{R})}_{\text{tempiric}} \ar[d]^{\cong}_{L_K} \ar@{^{(}->}[rr] & & \widehat{G(\mathbb{R})}_{\text{adms}} \ar[d]^{\cong}_{\mathcal{MH}} \\
\widehat{K(\mathbb{R})} \ar@{^{(}->}[rr] & & \widehat{G(\mathbb{R})_0}_{\text{adms}}
}
\]
%where the bottom map sends any representation of $K(\mathbb{R})$  to the representation of $G(\mathbb{R})_0$ which we obtain after trivial extension on the abelian part of $G(\mathbb{R})_0$.

When $G(\mathbb{R})$ is an  equal-rank group, an important part of the tempiric dual of $G(\mathbb{R})$ consists of discrete series representations, i.e., unitary irreducible representations which appear discretely in the Plancherel formula of $G(\mathbb{R})$. 

In \cite{MR463358,Par}, Parthasarathy and Atiyah and Schmid utilized geometric Dirac operators to construct all discrete series representations of $G(\mathbb{R})$. The geometric Dirac operator of $G(\R)$ is a  differential operator on sections of an equivariant bundle on the homogeneous space  $G(\R)/K(\R)$.

In \cite{MR4079418}, Afgoustidis introduced a parameter $t\in[0,1]$ in Parthasarathy's abovementioned construction so that he got a one-parameter family of twisted geometric Dirac operators acting on homogeneous vector bundles defined over the homogeneous spaces $\{G_t/K(\mathbb{R})\}$. In this way, he managed to explicitly construct the Mackey-Higson bijection for the discrete series representations of $G(\mathbb{R})$ as contractions to the corresponding representations of $G(\mathbb{R})_0$.  

%\[
%\operatorname{Ker}(\mathcal{D}_{G_t}) \simeq
%\begin{cases}
%W_d & t \in (0,1], \\
%L_K(W_d) & t = 0.
%\end{cases}
%\]

An important ingredient of what is treated in the present work will be the algebraic counterpart $D_{\mathfrak{g},K}$ of the abovementioned Parthasarathy's Dirac operator. It was introduced by Vogan in a series of  talks in MIT \cite{vogantalks} and since then it is known as \emph{the algebraic Dirac operator}. If  $\mathfrak{g}=\mathfrak{k}\oplus\mathfrak{p}$ stands for the Cartan decomposition of $\mathfrak{g}$, the algebraic Dirac operator $D_{\mathfrak{g},K}$ is an element of the  algebra $U(\mathfrak{g})\otimes_{\C} Cl(\mathfrak{p})$ where $Cl(\mathfrak{p})$ is the Clifford algebra of $\mathfrak{p}$. 

Whenever $V$ is a $(\mathfrak{g},K)$-module and $S$ is a spin module of $Cl(\mathfrak{p})$, then $D_{\mathfrak{g},K}$ defines a linear operator, denoted again by $D_{\mathfrak{g},K}$, acting on the tensor product $V\otimes_{\C} S$. The tensor product $V\otimes_{\C} S$ admits a natural action of the spin double cover $\widetilde{K}$ of $K$. The operator $D_{\mathfrak{g},K}$ is $\widetilde{K}$ -equivariant and  the quotient
\begin{equation*}
H_{D_{\mathfrak{g},K}}(V) := \frac{\ker D_{\mathfrak{g},K}}{\ker D_{\mathfrak{g},K} \cap \mathrm{im}\,D_{\mathfrak{g},K}},
\end{equation*}
called \emph{the Dirac cohomology of $V$}, is a representation of $\widetilde{K}$.
Vogan conjectured and Huang and Pand{\v{z}}i{\'c} proven \cite{huangpandzic} that, for an irreducible $(\mathfrak{g},K)$-module, whenever $H_{D_{\mathfrak{g},K}}(V)$ is nonzero, $H_{D_{\mathfrak{g},K}}(V)$ completely determines  the infinitesimal character of $V$.

The proof of Vogan's conjecture relies on  the fact that the square of the Dirac operator  acts as a scalar  on each $\widetilde{K}$-isotypic component.

%An important feature  of the Dirac operator is the fact that on each $\widetilde{K}$-isotypic component its square $D_{\mathfrak{g},K}$ acts as a scalar operator. 

Other features of algebraic Dirac operators include the Dirac-Parthasarathy inequality \cite{parthaineq} which provides a powerful method to determine nonunitarity for irreducible representations of $G(\mathbb{R})$, relations with coherent families and characters \cite{diracindex,familiesCroate,MPVZ}, applications to K-Theory \cite{clare2,clare1} and the Connes-Kasparov conjecture \cite{haluk}. Moreover, Dirac cohomology is related to  Lie algebra cohomology \cite{renardpandzic}, the BGG category $\mathcal{O}$ \cite{afe2}, and the theta correspondence \cite{afe3}. 

Dirac cohomology has been computed for various
families of modules, including highest weight modules in \cite{HX}, $A_q(\lambda)$-modules in \cite{kang-pandzic},
generalized Enright-Varadarajan modules in \cite{MP10}, unipotent representations in \cite{dongwong,BP2,BP1}, Jacquet modules in \cite{DH} and $(\mathfrak{g},K)$-modules with generalized infinitesimal characters in \cite{somberg}.

In light of the above discussion, several questions arise naturally.

\begin{itemize}
	\item Is there a well-defined algebraic Dirac operator  for the deformation  family of a real reductive group $G(\mathbb{R})$? 

\item If so, does its square  act as a scalar  on each $\widetilde{K}$-isotypic component, as in  the group case?

\item Is there a version of Vogan's conjecture for algebraic families of Harish-Chandra modules  for the deformation  family of a real reductive group?

\item Can Dirac operators be used to give a new  algebraic perspective for Vogan's bijection?
\end{itemize}

In this paper,  we answer all of  these question except for the very last one dealing with its relation to Vogan's bijection. We shall address this elsewhere.

It should be noted that beyond the mentioned work of Bernstein et al.  families of Harish-Chandra modules  naturally  appear in representation theory of real reductive  groups.

For example, the  algorithm underlying  the \textit{atlas} software for  determining the unitary dual of a real reductive group  uses families of Harish-Chandra modules for a constant family of Harish-Chandra pairs \cite{MR4146144}. 

 Another example is the study of   analytic families of Harish-Chandra modules   in the Thesis of van der Noort  \cite{Noort}; see also the related work of Wallach \cite{MR4389792}. 

A more general case of Harish-Chandra modules over commutative rings was  analyzed in 
\cite{MR3853058,MR4007195,MR4166966}.
 Also see \cite{Fabian}  for   related   families of $\mathcal{D}$-modules.

\subsection{Review and main results}

The main goal  of this paper is to develop a unified algebraic Dirac operator theory for algebraic families and in particular for deformation and deformation-like families of real reductive groups.
%We prove that the square of the Dirac operator for deformation-like families satisfies an identity analogous to the classical case. 

 A key result of this paper is the formulation and proof of a theorem that generalizes Vogan's conjecture for generically irreducible families of Harish-Chandra modules for deformation-like families. Localization plays a major role in the proof of the abovementioned theorem.  

 Section \ref{Spinreprese} serves as   an introduction to the theory of algebraic Dirac operators for a real reductive group  as presented in \cite{pandzic}. Namely, we recall the notion of the algebraic Dirac operator and Dirac cohomology while we discuss Vogan's conjecture.

Section \ref{algebraicf} introduces the notions of algebraic families of Harish-Chandra pairs, generalized pairs, quadratic algebras, Clifford algebras, and their modules over a smooth complex affine algebraic variety $\boldsymbol{X}$. 

Working over an affine algebraic variety allows us to replace most families by their global sections, enabling us to treat them as modules over a ring rather than a sheaf of modules. 
Moreover, we introduce the deformation-like families $(\g_{(n)}, K)$ of Harish-Chandra pairs associated with a real reductive group $G(\R)$ (see \ref{vdf}). For these families, we describe the Harish-Chandra homomorphism and we recall the notion of  infinitesimal character with respect to a $\theta$-stable Cartan subfamily.

In Section \ref{sec4}, we focus on families over a complex affine algebraic variety $\boldsymbol{X}$, whose coordinate ring $O_{\boldsymbol{X}}$ is a principal ideal domain containing $\mathbb{C}$. We assume that the family of Lie algebras $\g$ is a subfamily of the constant family $O_{\boldsymbol{X}} \otimes_{\mathbb{C}} \fg$ (where $\fg = \mathbb{C} \otimes_{\mathbb{R}} \operatorname{Lie}(G(\R))$) and that $\g$ is stable under the Cartan involution of $O_{\boldsymbol{X}} \otimes_{\mathbb{C}} \fg$. In particular,  $\g$ admits a decomposition into  eigenspaces of  $\theta$:
\begin{equation*}
\g = \k \oplus \p.
\end{equation*}
We further assume that $\p=\langle r\rangle\otimes_{\C}\fp$ for some $r\in O_{\boldsymbol{X}}$.
This class of families includes the deformation-like families mentioned above, as well as the classical case  arising from a real reductive group,  which can be considered as a family over the one point complex affine algebraic variety.

For the sake of simplicity, for the rest of this section  we shall assume that $G(\R)$ is semisimple.

In the group case, the definition of the algebraic Dirac operator $D_{\fg,K}$ of $\fg$ relies on the non-degeneracy of its Killing form. For a family   $\g$ in our class,  the non-degeneracy of the   Killing form does not suffice to define a Dirac operator. This is because the morphism from $\g$ to its dual $\g^*$ induced by the Killing form is a monomorphism but not an isomorphism. Symmetric bilinear forms that induce an isomorphism between the corresponding space and its dual are called \emph{unital}.

   We prove that for $\g$ as above, the Killing form restricted to $\p$ is proportional to a unital form $\boldsymbol{\beta}$. Using $\boldsymbol{\beta}$, we define a canonical Dirac operator $D(\g,\beta,r)$ for $\g$.
Moreover, we show that $D(\g,\beta,r)^2$ satisfies an important polynomial identity that relates it to the Casimir operator of $\g$, generalizing the known identity in the classical case of a real reductive group, while we define the Dirac cohomology for families of $(\g, K)$-modules.

In Section \ref{Vo}, we formulate and prove a generalization of Vogan's conjecture for constant families and for the deformation-like families $(\g_{(n)}, K)$. 

 In  \cite{eyallietheory}, the notion of an infinitesimal character with respect to a fundamental Cartan subfamily for modules of the deformation family  was introduced. We prove that a generically irreducible family of $(\g_{(n)}, K)$-module with non-zero Dirac cohomology must have an infinitesimal character with respect to a fundamental Cartan subfamily. This result resonates with the conjectural algebraic characterization for the Mackey-Higson bijection that was given in  \cite{eyallietheory}.

 Finally, in Section \ref{SL}, for the example of $SL_2(\mathbb{R})$, 
 %we illustrate the discussion from the previous sections in the example of the deformation family for $SL_2(\mathbb{R})$. In this case,
 we explicitly compute the Dirac cohomology of any generically irreducible family of Harish-Chandra modules for the deformation family  and we verify Vogan's conjecture.

\textbf{Acknowledgement:} 
The authors thank  Dan Barbasch  for a valuable advice. 
The helpful  suggestions of the anonymous referee are gratefully acknowledged.

This research was supported by the Israel Science Foundation (grant No. 1040/22).
 \section{The Dirac operator of a real reductive group}\label{Spinreprese} 
 In this section we recall the setup needed for describing Vogan's conjecture for Harish-Chandra modules of a connected real reductive Lie group $G(\R)$. The results stated here shall be used throughout the paper. Further information and proofs can be found in \cite{goodman,pandzic}.

\subsubsection{The reductive groups}
We let $G(\R)$ be a connected real reductive group in the sense of  \cite{MR4146144}. In particular $G(\R)$ is the fixed point set of an antiholomorphic involution $\sigma$ of a corresponding complex connected reductive algebraic group $G$. 
There is a corresponding Cartan involution $\theta$ of $G$ whose fixed point set $K$ is a complex reductive algebraic group $K$ and $K(\R):=K^{\sigma}$ is a maximal compact subgroup of $G(\R)$. 
\subsubsection{The Lie algebras}
We let $\mathfrak{g}:=\operatorname{Lie}(G)$, $\mathfrak{k}:=\operatorname{Lie}(K)$ be the complex Lie algebras of $G$ and $K$ respectively.  Similarly,  $\mathfrak{g}^{\sigma}:=\operatorname{Lie}(G^{\sigma})$, $\mathfrak{k}^{\sigma}:=\operatorname{Lie}(K^{\sigma})$ are the real Lie algebras of $G^{\sigma}$ and $K^{\sigma}$ respectively. The Cartan decompositions 
\[\mathfrak{g}=\mathfrak{k}\oplus \mathfrak{p}, \quad \mathfrak{g}^{\sigma}=\mathfrak{k}^{\sigma}\oplus \mathfrak{p}^{\sigma},\]
are the eigenspace decompositions   of $\theta$ with $\fk=\fg^{\theta}$ and $\fp=\fg^{-\theta}$.

We let 
\[\fg=Z_{\fg}\oplus \fg'\] be the decomposition of $\fg$ to the sum of its center and its derived Lie algebra $ \fg'=[\fg,\fg]$.

\begin{comment}
Hence multiplication defines an isomorphism 
\[ \mathcal{U}(Z_{\fg})\otimes_{\C} \mathcal{U}(\fg')\longrightarrow \mathcal{U}(\fg). \]

Throughout this section we shall explain how the  relevant constructions behave with respect to this direct sum decomposition. We shall also need the decomposition of the center with respect to $\theta$:
$Z_{\fg}=Z_{\fg}^{\fk}\oplus Z_{\fg}^{\fp}$ with $Z_{\fg}^{\fk}=Z_{\fg}\cap \fk$ and $Z_{\fg}^{\fp}=Z_{\fg}\cap \fp$.  

\subsubsection{The semisimple groups}
We set $G'$ to be the commutator subgroup of $G$. It is connected since $G$ is connected and the Lie algebra of $G'$ is $\fg'$.  We set $K'=G'\cap K$. It is reductive with Lie algebra $\fk'$. 
\end{comment}
\subsubsection{The invariant symmetric form}\label{invform}
We let $\beta$ be a  symmetric bilinear form on $\fg$ that is $K$-invariant,  $\fg$-invariant and which is positive definite on $\fp^{\sigma}$ and negative definite on $\fk^{\sigma}$ (and in particular also non-degenerate on $\fg$). Such a form always exists and the choice of such  does not play a significant role in what follows. To make  a   canonical choice  we shall assume that the form $\beta$ upon restriction to its derived semi-simple subalgebra $\fg'$ is given by the Killing form of $\fg'$. 
\begin{comment}
Note that the  direct sum decompositions that we mentioned are all orthogonal with respect to $\beta$. Moreover  we let $\fp'$ be the orthogonal complement of $Z_{\fg}^{\fp}$ in $\fp$,
\[\fp=Z_{\fg}^{\fp}\oplus \fp',\]
and $\fk'$ be the orthogonal complement of $Z_{\fg}^{\fk}$ in $\fk$,
\[\fk=Z_{\fg}^{\fk}\oplus \fk'.\]
It should be noted that in general $\fk'$ is not equal to derived Lie algebra $[\fk',\fk']$. We do have $\fk'=\fg'\cap \fk$ and $\fp'=\fg'\cap \fp$. 
\end{comment}

\subsubsection{The Clifford algebra}
We let $\beta|_{\fp}$ stand for the restriction of $\beta$ to $\fp\times\fp$. Abusing notation, when there is no chance for ambiguity, we   shall  occasionally  let  $\beta$ also stand for  its restriction  $\beta|_{\fp}$.
 We denote  the Clifford algebra of $(\mathfrak{p},\beta|_{\fp})$ by $Cl(\mathfrak{p},\beta|_{\fp})$ 
 and we realize it as  the quotient of the tensor algebra $T(\mathfrak{p})$ by the two-sided ideal generated by all elements of the form
\begin{equation*}
	X\otimes Y+Y\otimes X-\beta (X,Y), \quad X,Y\in\mathfrak{p}.
\end{equation*} 
We denote   the canonical embedding of $\mathfrak{p}$ into $Cl(\mathfrak{p},\beta|_{\fp})$ by $\gamma=\gamma_{\fp,\beta}$. 
\begin{comment}
The orthogonal direct sum decomposition $\fp=Z_{\fp}\oplus \fp'$ induces an isomprphism
\[ Cl(Z_{\fg}^{\fp},\beta|_{Z_{\fg}^{\fp}})\widehat{\otimes}_{\C} Cl(\mathfrak{p}',\beta|_{\fp'})\longrightarrow Cl(\mathfrak{p},\beta), \]
where the left hand side is the tensor product of $\Z_2$-graded spaces. In particular we have an embedding of 
$Cl(Z_{\fg}^{\fp},\beta|_{Z_{\fg}^{\fp}})$ and  $Cl(\mathfrak{p}',\beta|_{\fp'})$ in $Cl(\mathfrak{p},\beta)$.
\end{comment}

\subsubsection{The mapping of $\fk$ into the Clifford algebra}
For $a$ and $b$ in $\mathfrak{p}$, we let $R_{\beta,a,b}$ stands for  the element of 
\begin{equation*}
	\mathfrak{so}(\mathfrak{p},\beta):=\{T\in \mathfrak{gl}(\mathfrak{p})\mid \beta(Tu,v)+\beta(u,Tv)=0,\forall u,v\in \mathfrak{p} \}.
\end{equation*}
defined by 
\begin{equation}\label{Rmap}
	R_{\beta,a,b}(v):=\beta(b,v) a-\beta(a,v) b, \quad v\in \mathfrak{p}.
\end{equation}
The set $\{R_{\beta,a,b}\mid a,b\in\mathfrak{p}\}$ spans $\mathfrak{so}(\mathfrak{p},\beta)$ \cite[Lemma 6.2.1]{goodman} and there is a Lie algebra monomorphism $\varphi_{\beta}:\mathfrak{so}(\mathfrak{p},\beta)\rightarrow Cl(\mathfrak{p},\beta)$ given by
\begin{equation*}
	R_{\beta,a,b}\mapsto \frac{1}{2}[\gamma(a),\gamma(b)].
\end{equation*} 
The adjoint action of $\fk$ on $\fp$ and the $\fg$-invariance of $\beta$ give rise to  a Lie algebra homomorphism 
$ad:\mathfrak{k}\longrightarrow \mathfrak{so}(\mathfrak{p},\beta)$.
We let $\alpha=\alpha_{\beta}:\fk\longrightarrow Cl(\mathfrak{p},\beta)$ be the morphism of Lie algebras given by the composition
\begin{equation}
\mathfrak{k}\overset{\mathrm{ad}}{\longrightarrow} \mathfrak{so}(\mathfrak{p},\beta)\overset{\varphi_{\beta}}{\longrightarrow} Cl(\mathfrak{p},\beta).
\end{equation} 
\begin{comment}
Taking into account the decompositions $\fp=Z_{\fg}^{\fp}\oplus \fp',  \fk=Z_{\fg}^{\fk}\oplus \fk'$, 
we first note that $\mathfrak{so}(\mathfrak{p},\beta)=\mathfrak{so}(Z_{\fg}^{\fp},\beta|_{Z_{\fg}^{\fp}})\oplus \mathfrak{so}(\mathfrak{p}',\beta|_{\fp'})$.  Of course any element in $Z_{\fg}^{\fp}$ is mapped to zero in $\mathfrak{so}(\mathfrak{p},\beta)$ and any element of $\fk'$ maps $\fp'$ to itself and map $Z_{\fg}^{\fp}$  to zero. It follows that $\alpha_{\beta|_{Z_g}}:Z^{\fk}_{\fg}\longrightarrow Cl(Z_{\fg}^{\fp},\beta|_{Z_{\fg}^{\fp}})$ is the zero map. Using the embedding of $Cl(Z_{\fg}^{\fp},\beta|_{Z_{\fg}^{\fp}})$ and $Cl(\fp',\beta|_{\fp'})$ in $Cl(\fp,\beta)$ and the projections with respect to 
the decomposition $  \fk=Z_{\fg}^{\fk}\oplus \fk'$, 
we can interpret $\alpha_{\beta|_{Z_g}}$ and $\alpha_{\beta|_{\fg'}}$ as maps from $\fk$ into $Cl(\fp,\beta)$. Under this convention we have $\alpha_{\beta}=\alpha_{\beta|_{Z_g}}+\alpha_{\beta|_{\fg'}}$.
\end{comment}

\subsubsection{Canonical elements and Casimirs}\label{CanEl} 
For every linear subspace $\mathfrak{l}$ of $\fg$ on which the restriction of $\beta$ is non-degenerate we obtain a canonical element $\omega(\mathfrak{l},\beta)$ in $\mathfrak{l}\otimes_{\C} \mathfrak{l}\subseteq \mathfrak{g}\otimes_{\C} \mathfrak{g}$ as follows.   
There are natural linear isomorphisms 
\begin{equation}\label{Endembedd1}
	\mathrm{End}(\mathfrak{l})\cong\mathfrak{l}^*\otimes_{\C}\mathfrak{l}\cong\mathfrak{l}\otimes_{\C}\mathfrak{l},
\end{equation}
where the second equivalence is given by the isomorphism $\mathfrak{l}^*\cong \mathfrak{l}$ determined by $\beta|_{\mathfrak{l}}$.
We let $\omega(\mathfrak{l},\beta)$  be the element in $\mathfrak{l}\otimes_{\C}\mathfrak{l}$ that is obtained as the image of  the identity operator $\mathbb{I}_\mathfrak{l}\in \mathrm{End}(\mathfrak{l})$  via the abovementioned isomorphism. 
  
If $\{e_1,e_2,...,e_m\}$ is a basis for $\mathfrak{l}$ with a $\beta$-dual basis $\{e'_1,e'_2,...,e'_m\}$, that is a basis satisfying $\beta(e_i,e'_j)=\delta_{ij}$, the canonical element satisfies 
\[\omega(\mathfrak{l},\beta)=\sum_{i}^m e'_i\otimes e_i=\sum_{i,j=1}^m\beta (e'_j,e'_i) e_j\otimes e_i. \]
We denote the image of $\omega(\mathfrak{l},\beta)$ in $\mathcal{U}(\mathfrak{g})$
under  the obvious morphisms 
  \[\mathfrak{l}\otimes_{\C}\mathfrak{l}\longrightarrow T(\mathfrak{l})\longrightarrow T(\mathfrak{g})\longrightarrow \mathcal{U}(\mathfrak{g}),\]   by $\Omega(\mathfrak{l},\beta)$. When $\beta$ is 
  $\fg$-invariant,   $\Omega(\mathfrak{g},\beta)$ lies in $\mathcal{Z}(\mathfrak{g})$ (the center of the universal enveloping algebra of $\mathfrak{g}$). If in addition   $\mathfrak{g}$ is a semi-simple Lie algebra and $\beta$ its Killing form,  $\Omega(\mathfrak{g},\beta)$ is equal to the Casimir of $\fg$. 
%When $\fg$ is semisimple that $\Omega(\mathfrak{g},\beta)$ is the canonical Casimir element. 
%For  $\mathfrak{l}=\fg$ we have $\Omega(\mathfrak{g},\beta)=\Omega(Z_{\mathfrak{g}},\beta|_{Z_{\mathfrak{g}}})+\Omega(\mathfrak{g}',\beta|_{\mathfrak{g}'})$ where $\Omega(\mathfrak{g}',\beta|_{\mathfrak{g}'})$ is the canonical Casimir of the semisimple derived subalgebra $\fg'$ of $\fg$.

\subsubsection{The generalized pair}\label{genpair}
The group $K$  acts on the algebra $A=A(\fg,\beta):=U(\mathfrak{g})\otimes_{\C} Cl(\mathfrak{p},\beta)$ via the adjoint action on each factor. The derivative of this action  induces a map from $U(\fk)$ into $\operatorname{End}(A(\fg,\beta))$  given by 
\begin{equation*}
X\longmapsto [\Delta_{\beta}(X),\_], \quad X\in\mathfrak{k}.
\end{equation*}
Here $\Delta_{\beta}:\mathfrak{k}\longrightarrow A$ stands for the embedding of Lie algebras, known as \textit{the diagonal embedding}, given by 
\begin{equation}\label{demb}
	 \Delta_{\beta}(X)= X\otimes1+1\otimes \alpha_{\beta}(X), \hspace{1mm}\forall  X\in\fk.
\end{equation}
 The pair $(A(\fg,\beta),{K})$ together with the diagonal embedding $ \Delta_{\beta}$ and the action of ${K}$ on $A(\fg,\beta)$ is a generalized pair in the sense of \cite[Ch. I.6]{KNV}.
 We shall denote by $\fk_{\Delta}$ the image of $\fk$ in $A$ under the diagonal embedding. Similarly we shall denote the isomorphic copy of $\mathcal{U}(\fk)$ in $A$ by  $\mathcal{U}(\fk_{\Delta})$. For   later use we introduce the notation 
 $\Omega(\mathfrak{k}_{\Delta},\beta)$ for the image of $\Omega(\mathfrak{k},\beta)$ in $A$ under the diagonal embedding.

 \begin{comment}
Note $A(Z_{\fg},\beta|_{Z_{\fg}})$ and $ A(\mathfrak{g}',\beta|_{\fg'})$ are naturally embedded in $A(\fg,\beta)$.
%%
Using these embeddings and the projections with respect to 
the decomposition $  \fk=Z_{\fg}^{\fk}\oplus \fk'$, 
we can interpret $(\_)_{\Delta,\beta|_{Z_g}}$ and 
$(\_)_{\Delta,\beta|_{\fg'}}$
as maps from $\fk$ into $A(\fg,\beta)$. Under this convention we have $(\_)_{\Delta,\beta}=(\_)_{\Delta,\beta|_{Z_g}}+(\_)_{\Delta,\beta|_{\fg'}}$.
\end{comment}

%Considering the the decomposition  $\fg=Z_{\fg}\oplus \fg'$ 

%with the corresponding orthogonal projections $Pr_{Z_{\fg}}, Pr_{\fg'}$  the diagram  
%$$\xymatrix{
 %\fk\ar[d]_{Pr_{Z_{\fg}}\oplus Pr_{\fg'}  } \ar[rr]^{\hspace{-8mm}(\_)_{\Delta,\beta}} &&  A(\fg,\beta)  \\
% Z_{\fg}^{\fk}\oplus \fk'\ar[rr]^{\hspace{-10mm}(\_)_{\Delta,\beta|_{Z^{\fk}_{\fg}}}\otimes (\_)_{\Delta,\beta|_{{\fk'}}}} &&  A(Z_{\fg},\beta|_{Z_{\fg}})\widehat{\otimes}_{\C} A(\mathfrak{g}',\beta|_{\fg'})\ar[u]^{m}   }$$
%is commutative where the vertical right arrow is multiplication in $A(\fg,\beta)$ of the  images of $A(Z_{\fg},\beta|_{Z_{\fg}})$ and $A(\mathfrak{g}',\beta|_{\fg'})$, and where $\widehat{\otimes}$ stand for the tensor product of  $\Z_2$-graded spaces. The grading comes from the parity on the corresponding Clifford algebras. 

\subsubsection{The Dirac operator}
The tensor product of the canonical maps   $\fp\longrightarrow T(\fp)\longrightarrow T(\fg)\longrightarrow \mathcal{U}(\mathfrak{g})$ and  $\fp\longrightarrow T(\fp)\longrightarrow Cl(\mathfrak{p},\beta)$ gives rise to a linear $K$-equivariant embedding  
\begin{equation}\label{Endembedd}
\mathfrak{p}\otimes_{\C}\mathfrak{p}\hookrightarrow U(\mathfrak{g})\otimes_{\C} Cl(\mathfrak{p},\beta)= A(\fg,\beta),
\end{equation}

\begin{definition*} \textit{The algebraic Dirac operator of} $(\mathfrak{g},\beta)$ is the element $D=D_{\fg,\beta}=D_{\beta}$ of $A(\fg,\beta)$ which is  the image of the canonical element $\omega(\mathfrak{p},\beta)$  via the embedding \eqref{Endembedd}.
\end{definition*}
If $\{e_i\}$ and $\{e_i'\}$ are $\beta$-dual bases of $\mathfrak{p}$,   then
\begin{equation*}
D=\sum_{i} e_i\otimes\gamma_{\fp,\beta}(e'_i)=\sum_{i,j}\beta(e_i',e_j') e_i\otimes\gamma_{\fp,\beta}(e_j).
\end{equation*}
\begin{comment}
Identifying $D_{Z_{\fg},\beta|_{Z_{\fg}}}$, and $D_{\fg',\beta|_{\fg'}}$ with their images in $A(\mathfrak{g},\beta)$, then \[D_{\fg,\beta}=D_{Z_{\fg},\beta|_{Z_{\fg}}} + D_{\fg',\beta|_{\fg'}}. \]
Note that the term $D_{\fg',\beta|_{\fg'}}$ is independent of any choice. 
\end{comment}

\subsubsection{Root space decompositions}
Fix a fundamental Cartan subalgebra $\mathfrak{h}$ of $\fg$. Then $\mathfrak{h}=\mathfrak{t}\oplus \mathfrak{a}$ with $\mathfrak{t}$ being a Cartan subalgebra of $ \mathfrak{k}$ and $\mathfrak{a}\subseteq \mathfrak{p}$. %Each linear functional of $\ft$ can be extended to a linear functional of $\fh$ by letting it  be zero on $\mathfrak{a}$. %In this way the root system $\Delta(\fg,\fh)$ contains the root system $\Delta(\fk,\ft)$.
Let $\Delta(\fg,\fh)$ be the corresponding set of roots. Fix a positive system $\Delta^+(\fg,\fh)$ and let $\rho_{\fg}$  be the  half sum of the  roots in $\Delta^+(\fg,\fh)$. The restriction $\beta|_{\fh}$  of $\beta$ to $\mathfrak{h}$ induces a non-degenerate form $\beta|_{\fh}^*$ on the dual $\fh^*$. It turns out that $\lVert\rho_{\fg}\rVert^2:=\beta|_{\fh}^*(\rho_{\fg},\rho_{\fg})$ does not depend on the form $\beta$ (as long as its restriction to $\fg'$ is given by the Killing form) nor the choice of the positive root  system. 

Similarly  we denote by $\Delta(\fk,\ft)$ the set of roots of $\fk$ with respect to $\ft$, we fix a positive root system $\Delta^+(\fk,\ft)$
and we define $\lVert\rho_{\fk}\rVert^2:=\beta|_{\ft}^*(\rho_{\fk},\rho_{\fk})$.

We choose  the set of positive roots   $\Delta^+(\fg,\fh)$ to be $\theta$-stable. Hence   the corresponding  nilpotent Lie algebras $\mathfrak{n}^{\pm}$ are $\theta$-stable and we can assume that  
\[\mathfrak{n}_{\fk}^{\pm}:= \bigoplus_{\alpha\in \Delta^+(\fk,\ft)}\fk_{\pm \alpha }\quad \subseteq \quad \mathfrak{n}^{\pm}:= \bigoplus_{\alpha\in \Delta^+(\fg,\fh)}\fg_{\pm \alpha },    \]
where $\fk_{\alpha }$ and $\fg_{ \alpha }$ are the  root subspaces of $\fk$ and $\fg$  respectively.

\subsubsection{The square of the Dirac operator}
The square of the Dirac operator takes a simple form that is explicitly given by 
 \begin{equation}\label{diracsquare}	2D_{\fg,\beta}^2=\Omega(\mathfrak{g},\beta)\otimes 1-\Omega(\mathfrak{k}_{\Delta},\beta|_{\fk})+\lVert\rho_{\fg}\rVert^2-\lVert\rho_{\mathfrak{k}}\rVert^2.
\end{equation}
For a proof, e.g.  see \cite[Prop. 3.1.6]{pandzic}.

\begin{comment}
    
Relating the Dirac operator of $\fg$ to that of its derive Lie algebra one can show that 
 \begin{eqnarray}\nonumber &&2D_{\fg,\beta}^2=2D_{\fg',\beta|_{\fg'}}^2 + \Omega(Z_{\mathfrak{g}}^{\fp},\beta|_{z_{\fg}^{\fp}})\otimes 1.
\end{eqnarray}
The term $\Omega(Z_{\mathfrak{g}}^{\fp},\beta|_{z_{\fg}^{\fp}})$ is of course in the center of the enveloping algebra of $\fg$.
\end{comment}
 %\begin{eqnarray}\nonumber &&2D_{\fg,\beta}^2=2\left(D_{Z_{\fg},\beta|_{Z_{\fg}}}^2 + D_{\fg',\beta|_{\fg'}}^2 \right)=\\ \nonumber
 %&& \Omega(\mathfrak{g'},\beta|_{\fg'})\otimes 1-\Omega(\mathfrak{k}'_{\Delta},\beta|_{\fk'})+\lVert\rho_{\fg'}\rVert^2-\lVert\rho_{\mathfrak{k}'}\rVert^2 +\\ \nonumber
%&&\Omega(Z_{\mathfrak{g}},\beta|_{z_{\fg}})\otimes 1-\Omega((Z_{\fg}^{\fk})_{\Delta},\beta|_{Z_{\fg}^{\fk}})=\\ \nonumber
 %&& \Omega(\mathfrak{g'},\beta|_{\fg'})\otimes 1-\Omega(\mathfrak{k}'_{\Delta},\beta|_{\fk'})+\lVert\rho_{\fg'}\rVert^2-\lVert\rho_{\mathfrak{k}'}\rVert^2 +\\ \nonumber
%&&\Omega(Z_{\mathfrak{g}}^{\fp},\beta|_{z_{\fg}^{\fp}})\otimes 1.
%\end{eqnarray}

\subsubsection{The spin module}\label{spinmodule}
By definition a \textit{spin module} (or a \textit{space of spinors}) for the Clifford algebra $Cl(\mathfrak{p},\beta|_{\fp})$ is a simple module. When $\operatorname{dim}(\fp)$ is even such a module is unique up to isomorphism and when $\operatorname{dim}(\fp)$ is odd there are exactly two different spin modules up to isomorphism \cite[Theorem 6.1.1]{goodman}.

We shall now present realizations of these spin modules. 

%To simplify our  exposition  here we shall assume that $\operatorname{dim}(\fp)$ is even.  This happens  for example, when $G(\R)$ is an equal rank group.
%Under this assumption $\fp$  has a direct sum decomposition 

When  $\operatorname{dim}(\fp)$ is even, 
$\mathfrak{p}=\mathfrak{p}^+\oplus\mathfrak{p}^-$ with $\mathfrak{p}^+$ and $\mathfrak{p}^-$ being dual maximal isotropic subspaces. We realize the spin module $S=S(\fp,\beta|_{\fp},\fp^-)$   as the exterior algebra  $\bigwedge \mathfrak{p}^-$. %$S=S(\fp,\beta|_{\fp},\fp^-):=\bigwedge \mathfrak{p}^-$ on which
The action of the Clifford algebra on $S$ is completely determined by the   action of $\gamma(\mathfrak{p}^+),\gamma(\mathfrak{p}^-)\subset Cl(\mathfrak{p},\beta|_{\fp})$. 
For $X\in \mathfrak{p}^-$ and $Y\in S$, we have
\[\gamma(X)\cdot Y:=X\wedge Y.\]
For $X\in \mathfrak{p}^+$,  $\gamma(X)$ acts on $S$ as the unique  graded derivation of degree $-1$ satisfying 
\[\gamma(X)\cdot Y:=\beta(X,Y), \quad \forall Y\in \mathfrak{p}^-\subset\bigwedge \mathfrak{p}^-. \] 
%The action 
%of the  Clifford algebra on $S$
%is given  by Clifford multiplication, i.e.,
%\begin{subequations}\label{Clmult}
%	\begin{align}
%	\gamma(u_{-i})(u_{-i_1}\wedge\ldots\wedge u_{-i_k}):=&u_{-i}\wedge u_{-i_1}\wedge\ldots\wedge u_{-i_k},\\
%	\gamma(u_{i})(u_{-i_1}\wedge\ldots\wedge u_{-i_k}):=&\sum_{j=1}^k(-1)^{j+1}\beta(u_{i},u_{-i_j})\label{Clmult1}\\
%	&\hspace{5mm}u_{-i_1}\wedge\ldots\wedge\widehat{u_{-i_j}}\wedge\ldots\wedge u_{-i_k},\nonumber
%\end{align}
%\end{subequations}
%for every $u_{-i_1}\wedge\ldots\wedge u_{-i_k}\in S$, $u_{-i}\in \mathfrak{p}^-$ and $u_{i}\in \mathfrak{p}^+$. The hat in \eqref{Clmult1} indicates that $u_{-i_j}$ is omitted.  
We denote the corresponding action map by 
\begin{equation}\nonumber
\gamma_{\mathfrak{p}^-,\beta}':Cl(\mathfrak{p},\beta)\longrightarrow \mathrm{End}(S(\fp,\beta,\fp^-)).
\end{equation}

When  $\operatorname{dim}(\fp)$ is odd, 
$\mathfrak{p}=\mathfrak{p}^+\oplus\mathfrak{p}^-\oplus \mathfrak{p}^0$ with $\mathfrak{p}^+$ and $\mathfrak{p}^-$ dual maximal isotropic subspaces,  and $\mathfrak{p}^0$ a one-dimensional subspace of $\fp$ orthogonal to $\mathfrak{p}^+\oplus\mathfrak{p}^-$  on which $\beta$ is nondegenerate.  In this case the spin module can still be realized on the exterior algebra $\bigwedge \mathfrak{p}^-$ with the same action of $\gamma(\fp^+\oplus\fp^-)$, as in the even-dimensional case, and choosing a   vector $e_0\in \fp_0$ satisfying $\beta(e_0,e_0)=2$, and $\epsilon\in \{-1,1\}$, the action of $\gamma(e_0)$ is via multiplication by $\epsilon $ on the even elements of $\bigwedge \mathfrak{p}^-$, and multiplication by $-\epsilon $ on the odd elements of $\bigwedge \mathfrak{p}^-$.  
The two possibilities for $\epsilon$  give non-isomorphic spin modules.
%and  we introduce 
%$\gamma_{\beta}':\fp\longrightarrow \mathrm{End}(S(\fp,\beta,\fp^-))$ defined as the composition
%\begin{equation}
%	\mathfrak{p}\overset{\gamma_{\fp,\beta}}{\longrightarrow}  Cl(\mathfrak{p},\beta)\overset{\gamma_{1,\beta}'}{\longrightarrow} \mathrm{End}(S(\fp,\beta,\fp^-)).
%\end{equation} 
\begin{comment}
    Now we shall assume that   $Z_{\fg}^{\fp}=(Z_{\fg}^{\fp})^{+}\oplus (Z_{\fg}^{\fp})^{-}$ and $\fp'=\fp'^+\oplus \fp'^-$ are decompositions into maximal dual isotropic subspaces with respect to the form $\beta|_{Z_{\fg}^{\fp}}$ and $\beta|_{\fp'}$ receptively and  $\fp^{\pm}=(Z_{\fg}^{\fp})^{\pm}\oplus \fp'^{\pm}$. 
Under this assumptions,
\[S(\fp,\beta|_{\fp},\fp^-)\cong S(Z^{\fp}_{\fg},\beta|_{Z^{\fp}_{\fg}},(Z^{\fp}_{\fg})^-)\widehat{\otimes}_{\C} S(\fp',\beta|_{\fp'},\fp'^-)\]
and as a result the maps
\[T\longmapsto T\otimes 1, \quad T \longmapsto 1\otimes T\]
define  embeddings of $\operatorname{End}(S(Z^{\fp}_{\fg},\beta|_{Z^{\fp}_{\fg}},(Z^{\fp}_{\fg})^-))$ and $\operatorname{End}(S(\fp',\beta|_{\fp'},\fp'^-))$ respectively into $\operatorname{End}(S(Z^{\fp}_{\fg},\beta|_{Z^{\fp}_{\fg}},(Z^{\fp}_{\fg})^-)\widehat{\otimes}_{\C} S(\fp',\beta|_{\fp'},\fp'^-))$. {\color{red}To be continuoed .....}
\end{comment}

\subsubsection{The double cover}\label{dc}
Apart from the $Cl(\mathfrak{p},\beta)$-module structure, $S$ is equipped with the $\mathfrak{k}$-action  defined as the composition
\begin{equation}\label{haction}
	\mathfrak{k}\overset{\mathrm{ad}}{\longrightarrow} \mathfrak{so}(\mathfrak{p},\beta)\overset{\varphi_{\beta}}{\longrightarrow} Cl(\mathfrak{p},\beta)\overset{\gamma_{\fp^-,\beta}'}{\longrightarrow} \mathrm{End}(S(\fp,\beta,\fp^-)).
\end{equation} 
 We would like to lift this action to a group action of $K$. Although this can not always be done, the action can be lifted to the spin  double   cover of $K$ which we shall now describe. 

Recall (e.g. see \cite[Sec. 3.2.1] {pandzic}) that \textit{the spin double cover}  $\widetilde{K}(\R)=\widetilde{K}(\R,\beta)$ of $K(\R)$ is defined as the pullback of the cover $\pi:\operatorname{Spin}(\fp^{\sigma},\beta)\longrightarrow SO(\fp^{\sigma},\beta)$ with respect to $\operatorname{Ad}:K(\R)\longrightarrow SO(\fp^{\sigma},\beta)$. More precisely, 
\begin{equation*}\widetilde{K}(\R)=\{(k,s)\in K(\R)\times \operatorname{Spin}(\fp^{\sigma},\beta)|\operatorname{Ad}(k)=\pi(s) \}
\end{equation*}
and the natural projection from $\widetilde{K}(\R)$ onto $K(\R)$ is a double cover.
The projection  from  $\widetilde{K}(\R)$
into $\operatorname{Spin}(\fp^{\sigma},\beta)$ followed by the action of $\operatorname{Spin}(\fp^{\sigma},\beta)$ on $S$ defines an action of $\widetilde{K}(\R)$ on $S$. 
Let $\widetilde{K}=\widetilde{K}(\beta)$ be the complex reductive algebraic group obtained as the complexification of $\widetilde{K}(\R)$. Since the action of $\widetilde{K}(\R)$ on $S$ defines a continuous finite-dimesional representation, we obtain an algebraic action of  $\widetilde{K}$  on $S$. We denote this representation by $\pi_{\widetilde{K},S}:\widetilde{K}\longrightarrow \operatorname{Aut}_{\C}(S)$. The differential of this action coincides with $\gamma_{\fp^-,\beta}' \circ \varphi_{\beta}\circ \operatorname{ad}$ .   

Clearly  the pair $(A(\fg,\beta),\widetilde{K})$ together with the diagonal embedding $\Delta_{\beta}$ and the action of $\widetilde{K}$ on $A(\fg,\beta)$ is a generalized pair. 

\subsubsection{Dirac cohomology}
For  a $(\mathfrak{g},K)$-module $V$, $V\otimes_{\C}S$ is an $(A(\fg,\beta),\widetilde{K})$-module. In particular the Dirac operator acts on  $V\otimes_{\C}S$ via a linear operator that we denote by $D(V)=D_{\fg,\beta}(V)=D_{\beta}(V)$. Since the operator $D_{}(V)$ is $\widetilde{K}$-equivariant, its kernel $\ker D(V)$  and its image $\mathrm{im}\hspace{0.5mm}D(V)$ are $\widetilde{K}$-modules.

\begin{definition*} Let $V$ be a $(\mathfrak{g},K)$-module. The \textit{Dirac cohomology} of $V$ is the $\widetilde{K}$-module 
	\begin{equation*}
		H_D(V):=\frac{\ker D(V)}{\ker D(V)\cap \mathrm{im}\hspace{0.5mm}D(V)}.
	\end{equation*}
\end{definition*}

\subsubsection{The Harish-Chandra isomorphism}\label{The HC iso}
 We let $\operatorname{HC}_{\fh}^{\mathfrak{g}}$ be the canonical Harish-Chandra isomorphism with respect to the triangular decomposition $\fg=\mathfrak{n}^{-}\oplus \fh \oplus \mathfrak{n}^{+}$. 
 Its domain is 
  $\mathcal{Z}({\mathfrak{g}})$ and its image is  $\operatorname{S}({\mathfrak{h}})^{W(\fg,\fh)}$, the Weyl invariants in the symmetric algebra of $\mathfrak{h}$. %{\color{red}DELTET THIS LATER. This is the canonical morphism that takes into account $\rho_{\fg'}\in \mathfrak{h}^*$, the half sum of all the roots appearing in $\mathfrak{n}^{+}$.  Since the positive system is $\theta$-stable we have $\rho_{\fg'}|_{\mathfrak{a}}\equiv 0$.} 
 Similarly we let $\operatorname{HC}_{\ft}^{\mathfrak{k}}:\mathcal{Z}({\mathfrak{k}})\longrightarrow \operatorname{S}({\mathfrak{t}})^{{W(\fk,\ft)}}$ be the canonical Harish-Chandra isomorphism for $\mathfrak{k}$ with respect to the triangular decomposition $\fk=\mathfrak{n}_{\fk}^{-}\oplus \ft \oplus \mathfrak{n}_{\fk}^{+}$. Note that we have a canonical embedding of the Weyl group of $\mathfrak{k}$ into the Weyl group of $\mathfrak{g}$.

Recall that
a $(\fg,K)$-module $X$ has an infinitesimal character $\lambda\in \fh^*$  if for every  element $\xi \in \mathcal{Z}(\fg)$, the action of $\xi$ on $X$ is given by multiplication by $\lambda(HC^{\fg}_{\fh}(\xi))$. Here the symmetric algebra $S(\fh)$
 is identified with the algebra of polynomial functions on $\fh^*$.

\subsubsection{Vogan's conjecture}\label{VC}

The spin double cover $\widetilde{K}(\R)$ is a compact  Lie group and we can use Cartan-Weyl theory of highest weights for  disconnected groups (see \cite[Theorem 4.25]{KNV}) to parameterize its finite-dimensional irreducible representations. These representations   can be identified with irreducible finite dimensional algebraic representations of $\widetilde{K}$. 

A representation  
 $X$  of $\widetilde{K}$  contains  a $\widetilde{K}$-type  of highest weight $\mu\in \ft^*$ 
(with respect to our chosen positive system) if there is a nonzero vector $v\in X$ which is of weight $\mu$ with respect to $\ft$ and annihilated by $\mathfrak{n}_{\fk}^{+}$. See section 5.1 in \cite{MR632407}.

Recall that using the nondegenerate form $\beta$ we embed $\ft^*$ in $\fh^*$. That is, a linear functional $\mu \in \ft^*$ is  made into a linear functional $\mu^{\ft}_{\fh}\in \fh^*$ by setting 
\[\mu^{\ft}_{\fh}(X)=\begin{cases}
    \mu(X),& X\in \ft\\
    0,& X\in \ft^{\perp}=\mathfrak{a}
\end{cases}\]

\begin{theorem*}[Vogan's Conjecture {\cite{huangpandzic,zbMATH06420484}}]\label{vogansconj}\index{Vogan's conjecture}
	Let $V$ be an irreducible $(\mathfrak{g},K)$-module. Assume that $H_D(V)$ contains a $\widetilde{K}$-type of highest weight $\mu\in\mathfrak{t}^*$. Then, the infinitesimal character $\lambda\in \fh^*$ of  $X$ is $W(\fg,\fh)$-conjugate to $(\mu+\rho_\mathfrak{k})^{\ft}_{\fh}$.
\end{theorem*}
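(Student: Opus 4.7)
The plan is to adapt the Huang-Pand\v{z}i\'c proof of Vogan's conjecture to this setting. The central algebraic input, internal to the generalized pair $(A(\fg,\beta),\widetilde{K})$, is the following statement: there exists an algebra homomorphism $\zeta:\mathcal{Z}(\fg)\to\mathcal{Z}(\fk_{\Delta})$ such that for every $z\in\mathcal{Z}(\fg)$ one can find $a_z\in A(\fg,\beta)$ with
\[
z\otimes 1 \;-\; \zeta(z) \;=\; D\,a_z + a_z\,D,
\]
and such that, under the Harish-Chandra isomorphisms $\operatorname{HC}^{\fg}_{\fh}$ and $\operatorname{HC}^{\fk}_{\ft}$, the morphism $\zeta$ is identified with the shifted restriction
\[
S(\fh)^{W(\fg,\fh)} \longrightarrow S(\ft)^{W(\fk,\ft)}, \qquad p \longmapsto \bigl(\mu\mapsto p((\mu+\rho_{\fk})^{\ft}_{\fh})\bigr).
\]

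To produce $\zeta$, I would work with the superderivation $d$ on the $\mathbb{Z}_2$-graded algebra $A(\fg,\beta)$ given by the supercommutator with $D$. A direct calculation shows that $D$ supercommutes with $\Delta_{\beta}(\fk)$, hence with all of $\mathcal{U}(\fk_{\Delta})$; combined with \eqref{diracsquare} this forces $d^2=[D^2,\cdot]$ to vanish on the $\fk_{\Delta}$-invariant subalgebra $A(\fg,\beta)^{\fk_{\Delta}}$, so $d$ is a genuine differential there. The existence of $\zeta$ then becomes an assertion about the $d$-cohomology classes of $\mathcal{Z}(\fg)\otimes 1$ inside $A(\fg,\beta)^{\fk_{\Delta}}$. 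I would establish it by filtering $A(\fg,\beta)$ via PBW on $\mathcal{U}(\fg)$ tensored with the Clifford filtration on $Cl(\fp,\beta)$, passing to the associated graded $S(\fg)\otimes_{\C}\bigwedge\fp$ on which $d$ becomes a Koszul-type differential whose $K$-invariant cohomology collapses onto $S(\fk_{\Delta})^{K}$. The main obstacle will be pinning down the explicit shift by $\rho_{\fk}$ that distinguishes $\zeta$ among the possible candidate homomorphisms; for this, I would test both sides on a judiciously chosen spin module, using \eqref{diracsquare} and the difference $\lVert\rho_{\fg}\rVert^2-\lVert\rho_{\fk}\rVert^2$ to read off the shift, or equivalently, evaluate $\zeta(\Omega(\fg,\beta))$ and compare with the action of $\Omega(\fk_{\Delta},\beta|_{\fk})$.

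Granted the algebraic statement, the theorem is an immediate consequence. Let $[v\otimes s]\in H_D(V)$ be a nonzero class of $\widetilde{K}$-weight $\mu$, and pick any $z\in\mathcal{Z}(\fg)$. Since $V$ has infinitesimal character $\lambda$, the element $z\otimes 1$ acts on $V\otimes_{\C}S$ by the scalar $\lambda\bigl(\operatorname{HC}^{\fg}_{\fh}(z)\bigr)$. On the other hand, $\zeta(z)\in\mathcal{Z}(\fk_{\Delta})$ acts on the $\widetilde{K}$-isotypic component of highest weight $\mu$ by the scalar $(\mu+\rho_{\fk})\bigl(\operatorname{HC}^{\fk}_{\ft}(\zeta(z))\bigr)$, which by the description of $\zeta$ equals $(\mu+\rho_{\fk})^{\ft}_{\fh}\bigl(\operatorname{HC}^{\fg}_{\fh}(z)\bigr)$. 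Because $z\otimes 1-\zeta(z)=Da_z+a_z D$ annihilates the cohomology class $[v\otimes s]$, these two scalars coincide for every $z\in\mathcal{Z}(\fg)$. Therefore $\lambda$ and $(\mu+\rho_{\fk})^{\ft}_{\fh}$ define the same character of $\mathcal{Z}(\fg)$, and by the classical Harish-Chandra theorem they must be $W(\fg,\fh)$-conjugate, which is exactly the statement of the theorem.
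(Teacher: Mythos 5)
This theorem is not proved in the paper; it is recalled as background and cited directly to Huang--Pand\v{z}i\'c, and the key algebraic ingredient you identify (the existence of $\zeta:\mathcal{Z}(\fg)\to\mathcal{Z}(\fk_\Delta)$ with $z\otimes 1-\zeta(z)\in DA+AD$ and the compatibility with Harish--Chandra isomorphisms) is precisely what the paper later imports from \cite[Theorems 3.2.7, 3.2.8]{pandzic} when proving the family version in Section~\ref{Vo}. So your overall strategy is the standard one and matches what the paper relies on, and your derivation of $d^2=0$ on $A(\fg,\beta)^{\fk_\Delta}$ from \eqref{diracsquare} and $K$-invariance of $D$ is correct.

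There is, however, a genuine error in your intermediate description of $\zeta$. You claim that under the Harish--Chandra isomorphisms $\zeta$ becomes the ``shifted restriction'' $p\mapsto\bigl(\mu\mapsto p((\mu+\rho_\fk)^{\ft}_{\fh})\bigr)$. That map is not even well defined from $S(\fh)^{W(\fg,\fh)}$ to $S(\ft)^{W(\fk,\ft)}$: since $\rho_\fk$ is not fixed by $W(\fk,\ft)$, the target function fails to be $W(\fk,\ft)$-invariant. The correct statement, which is what the paper's commutative diagram encodes, is that $\operatorname{HC}^{\fk}_{\ft}\circ\Delta_\beta^{-1}\circ\zeta=\operatorname{Res}\circ\operatorname{HC}^{\fg}_{\fh}$ with $\operatorname{Res}$ the \emph{unshifted} restriction along $\fh=\ft\oplus\mathfrak{a}$; the $\rho_\fk$ in the final answer comes entirely from the separate fact that the $\widetilde K$-type of highest weight $\mu$ has $\fk$-infinitesimal character $\mu+\rho_\fk$. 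Your final scalar $(\mu+\rho_\fk)^{\ft}_{\fh}\bigl(\operatorname{HC}^{\fg}_{\fh}(z)\bigr)$ is the right one, but it is \emph{inconsistent} with your stated description of $\zeta$: plugging $\mu+\rho_\fk$ into your ``shifted restriction'' would produce $(\mu+2\rho_\fk)^{\ft}_{\fh}$, double-counting the shift. You should also be careful that $\zeta(z)$ lives in $\mathcal{Z}(\fk_\Delta)\subset A(\fg,\beta)$, not in $\mathcal{Z}(\fk)$; the $\Delta_\beta^{-1}$ must be applied before $\operatorname{HC}^{\fk}_{\ft}$. Once the description of $\zeta$ is replaced by the unshifted restriction, the rest of your argument goes through.
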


\section{Algebraic Families}\label{algebraicf}

In this section we recall some basic facts and definitions from the theory of algebraic families of Harish-Chandra pairs and their modules as given in \cite{eyalbern}. We shall work in the special case in which the base variety $\boldsymbol{X}$ is either a point $\boldsymbol{\operatorname{pt}}$,  $\mathbb{C}$ or $\mathbb{C}^{\times}$ and the families of groups are constant families with a reductive fiber. In this setup we can and we will replace a family by its global sections and  use modules over rings instead of sheaves of modules.  This shall be done throughout the paper and allows us to use simpler definitions than those that are given in \cite{eyalbern}.

We shall discuss the notion of  algebraic families  of quadratic spaces and quadratic Harish-Chandra pairs. 

We recall the construction of  the deformation family of Harish-Chandra pairs associated with a real reductive Lie group and some of its relatives. 
%We also recall the Harish-Chandra homomorphism and the notion of the infinitesimal character for families.

For deformation-like families we define the Harish-Chandra homomorphism. For modules of  deformation-like families we define and study the notion of an  infinitesimal character with respect to a fundamental Cartan
subfamily. 

Throughout this section we denote the  sheaf of regular functions on $\boldsymbol{X}$ by 
$\mathcal{O}_{\boldsymbol{X}}$ and its ring of global sections by $R_{\boldsymbol{X}}:=\mathcal{O}_{\boldsymbol{X}}(\boldsymbol{X})$. By means of choosing coordinates we  identify $R_{\boldsymbol{X}}$ with $\mathbb{C}[t]$ in the case of $\boldsymbol{X}=\mathbb{C}$,  with $\mathbb{C}[t,t^{-1}]$ in the case of $\boldsymbol{X}=\mathbb{C}^{\times}$, and with $\mathbb{C}$ in the case of $\boldsymbol{X}=\boldsymbol{\operatorname{pt}}$. . 
\subsection{Families of Lie algebras and Harish-Chandra pairs}
In this subsection we shall introduce the definitions and give examples for  algebraic families of various objects including  Lie algebras,  Harish-Chandra pairs, generalized pairs, quadratic spaces and Clifford algebras.

\subsubsection{Families of Lie algebras}
In this paper an \textit{algebraic family of complex Lie algebras $\boldsymbol{\mathfrak{g}}$ over $\boldsymbol{X}$} is a free $R_{\boldsymbol{X}}$-module  that is also a Lie algebra over $R_{\boldsymbol{X}}$.  When $\boldsymbol{X}$ is $\C$ or $\C^{\times}$,
for every $z\in \boldsymbol{X}$, the fiber of 
$\boldsymbol{\mathfrak{g}}$ over $z$ is the complex Lie algebra
\[\boldsymbol{\mathfrak{g}}|_{z}:=\boldsymbol{\mathfrak{g}}/(I_{z}\boldsymbol{\mathfrak{g}}), \]
where $I_{z}$ is the maximal ideal of $R_{\boldsymbol{X}}$ generated by $t-z$. In that way, an algebraic family of Lie algebras over $\boldsymbol{X}$ gives rise to a collection of complex Lie algebras parameterized by $\boldsymbol{X}$. When $\boldsymbol{X}=\boldsymbol{\operatorname{pt}}$, $\g|_{\boldsymbol{\operatorname{pt}}}:=\g$ is a single complex Lie algebra. 

\subsubsection{Families of Harish-Chandra pairs}

An \textit{algebraic family of Harish-Chandra pairs over ${\boldsymbol{X}}$} is a pair $(\boldsymbol{\mathfrak{g}},K)$ with $\boldsymbol{\mathfrak{g}}$ being an algebraic family of complex Lie algebras over $\boldsymbol{X}$ and $K$ being a complex algebraic group acting on $\boldsymbol{\mathfrak{g}}$ by automorphisms of Lie algebras over $R_{\boldsymbol{X}}$ 
\begin{equation*}\pi_{K,\g}:K\longrightarrow \operatorname{Aut}_{R_{\boldsymbol{X}}}(\boldsymbol{\mathfrak{g}}),
\end{equation*}
together with a $K$-equivariant embedding of complex Lie algebras \begin{equation*}j:\operatorname{Lie}(K)\longrightarrow \boldsymbol{\mathfrak{g}},
\end{equation*}
from  the Lie algebra of $K$ into  $\boldsymbol{\mathfrak{g}}$. Here the action of $K$ on its Lie algebra is the adjoint action. It is further required that the two  actions   of $\operatorname{Lie}(K)$ on $\boldsymbol{\mathfrak{g}}$ coincide in the sense that
\begin{equation*}\operatorname{ad}_{j(Y)}(Z)=d\pi_{\fk,\g}(Y)(Z), \quad \forall Y\in \operatorname{Lie}(K),Z\in \boldsymbol{\mathfrak{g}}.
\end{equation*}
Here $d\pi_{\fk,\g}$ stands for the action of $\operatorname{Lie}(K)$ that is obtained from the action $\pi_{K,\g}$ of $K$ on $\mathfrak{g}$. 

\subsubsection{Families of generalized pairs}
An \textit{algebraic family of complex (associative) algebras over   $\boldsymbol{X}$}
is a free $R_{\boldsymbol{X}}$-module  that is also an algebra over $R_{\boldsymbol{X}}$.

Following the notion of a \textit{pair} (or a \textit{generalized pair}) see \cite[Ch. I.6]{KNV},  an  \textit{algebraic family of  generalized   (Harish-Chandra) pairs }
 over $\boldsymbol{X}$ is a pair $(\boldsymbol{A},K)$, with $\boldsymbol{A}$ being an algebraic family of complex algebras over $\boldsymbol{X}$, $K$  being a complex algebraic group,  that satisfy analogous  conditions to the case of  an algebraic    family of Harish-Chandra pairs.  
\begin{example*}
Let $(\boldsymbol{\mathfrak{g}},K)$ be an algebraic family of Harish-Chandra pairs over ${\boldsymbol{X}}$. Then $\mathcal{U}(\g)$, the universal enveloping algebra of $\g$, is an algebraic family of complex algebras over ${\boldsymbol{X}}$,
 and $(\mathcal{U}(\g),K)$ is
 an  algebraic family of  generalized     pairs over ${\boldsymbol{X}}$.    
\end{example*}

\subsubsection{Families of  quadratic spaces}

\begin{definition*}
An algebraic  family of  quadratic spaces over $\boldsymbol{X}$ is a free $R_{\boldsymbol{X}}$-module $M$ together with  a quadratic form $q:M\longrightarrow R_{\boldsymbol{X}}$, that is, $q$  satisfies the following  two properties: 
\begin{enumerate}
    \item $q(rm)=r^2q(m)$, $\forall r\in R_{\boldsymbol{X}}$ and $m\in M$.
    \item The map $\beta=\beta_{q}:M\times M\longrightarrow R_{\boldsymbol{X}}$ given by \\ $\beta(m_1,m_2)=\frac{1}{2}\left(q(m_1+m_2)-q(m_1)-q(m_2) \right), \quad \forall m_1,m_2\in M,$\\ is $R_{\boldsymbol{X}}$-bilinear. 
\end{enumerate}
\end{definition*}
In our setting $2\in R_{\boldsymbol{X}}$ is invertible and the correspondence $q\longmapsto \beta_q$ is a bijection between quadratic forms and symmetric bilinear forms. We shall mainly use the symmetric form $\beta$ and refer to $(M,\beta)$ (rather than to $(M,q)$) as an algebraic family of quadratic spaces.

Given an algebraic family of quadratic spaces $(M,\beta)$, there is an induced morphism of $R_{\boldsymbol{X}}$-modules $\check{\beta}$ from $M$ into its dual 
$M^*=\operatorname{Hom}_{R_{\boldsymbol{X}}}(M,R_{\boldsymbol{X}})$, given by
\[\check{\beta}(m_1)(m_2)=\beta(m_1,m_2), \quad \forall m_1,m_2\in M. \]
The form $\beta$ is called \textit{nondegenerate} when $\check{\beta}$ is one-to-one, it is called \textit{unimodular} when $\check{\beta}$ is an isomorphism and it is called \textit{orthogonalizable} if there is a basis for $M$ that is orthonormal with respect to $\beta$. When $R_{\boldsymbol{X}}=\C$ the three properties are equivalent. In general,   orthogonalizability  implies unimodularity which implies nondegeneracy.
\subsubsection{Families of Clifford algebras }
\begin{definition*}
Let $(M,\beta)$ be an algebraic  family of  quadratic spaces over $\boldsymbol{X}$. The corresponding \textit{algebraic family of Clifford algebras over $\boldsymbol{X}$}, is a  unital $R_{\boldsymbol{X}}$-algebra $Cl(M,\beta)$ together with an $R_{\boldsymbol{X}}$-linear embedding $\gamma:M\longrightarrow Cl(M,\beta)$  such that for any 
$R_{\boldsymbol{X}}$-linear map $T:M\longrightarrow A$ into a  unital $R_{\boldsymbol{X}}$-algebra $A$ that satisfies $2T(m)^2=\beta(m,m)$ for any $m\in M$, there exists a unique morphism of unital $R_{\boldsymbol{X}}$-algebras $\widetilde{T}:Cl(M,\beta)\longrightarrow A$ such that $T=\widetilde{T}\circ \gamma $. 
 \end{definition*}
 
We shall always realize $Cl(M,\beta)$ as the quotient of the tensor algebra $T(M)$ by the two-sided ideal $I_{\beta}$  generated by 
\begin{equation*}
	X\otimes Y+Y\otimes X-\beta (X,Y), \quad X,Y\in M.
\end{equation*} 
In this realization   $\gamma$ is the natural composition $ M \longrightarrow  T(M) \longrightarrow  T(M)/I_{\beta}$.
\subsubsection{Families of quadratic Harish-Chandra pairs}
Motivated by the notion of quadratic Lie algebras over fields (see e.g., \cite{emergence}) below we define algebraic families of quadratic   Harish-Chandra pairs.
\begin{definition*}
An \textit{algebraic  family of quadratic   Harish-Chandra pairs  over $\boldsymbol{X}$} is an algebraic family of Harish-Chandra pairs $(\g,K)$ over $\boldsymbol{X}$ together with a symmetric $R_{\boldsymbol{X}}$-bilinear  form $\boldsymbol{\beta}: \g \times \g \longrightarrow  R_{\boldsymbol{X}}$ that is $(\g,K)$-invariant in the sense that
\begin{enumerate}
    \item ($\g$-invariance) For $X,Y,Z\in \g$,
    \[ \boldsymbol{\beta}([X,Y],Z)=\boldsymbol{\beta}(X,[Y,Z]). \]
    \item ($K$-invariance) 
 For $X,Y\in \g$, $k\in K$,
    \[ \boldsymbol{\beta}(\pi_{K,\g}(k)X,\pi_{K,\g}(k)Y)=\boldsymbol{\beta}(X,Y). \]
\end{enumerate}
\end{definition*}

\subsection{The  deformation family of Harish-Chandra pairs and its relatives}
In this subsection we  recall the construction of the deformation family of Harish-Chandra pairs associated with a real reductive group and discuss some natural variants of it.

Throughout this section we keep our assumptions on $G(\R)$ and use the notations for groups and Lie algebras that were introduced in    Section \ref{Spinreprese}.  In particular, the pair $(\mathfrak{g},K)$ is a  Harish-Chandra pair, where, as before, $\fg$ is the complex Lie algebra of $G$ and $K$ the fixed point set of the Cartan involution.

\subsubsection{The constant family}
By definition the constant family of  Lie algebras over $\boldsymbol{X}$ with fiber $\fg$ consists of the free $R_{\boldsymbol{X}}$-module of  all algebraic functions from $\boldsymbol{X}$ into $\fg$. Recall that a function $s:\boldsymbol{X}\longrightarrow \fg$ is algebraic if for every $\varphi\in \fg^*$, $\varphi \circ s \in R_{\boldsymbol{X}}$. The commutator of two sections $s_1,s_2$ is given by 
\[[s_1,s_2](x):=[s_1(x),s_2(x)], \quad \forall x\in \boldsymbol{X}.\] 
This family is canonically identified with $R_{\boldsymbol{X}}\otimes_{\C}\fg$. We shall freely use this identification and denote the constant family by $R_{\boldsymbol{X}}\otimes_{\C}\fg$. 
The action of $K$ on $\fg$ can be extended by  $R_{\boldsymbol{X}}$-linearity to an action on $R_{\boldsymbol{X}}\otimes_{\C}\fg$. 
By letting  $j_0:\operatorname{Lie}(K)\longrightarrow \fg$ stand for the equivariant embedding that is given with $(\fg,K)$,  the map 
$j:\operatorname{Lie}(K)\longrightarrow R_{\boldsymbol{X}}\otimes_\mathbb{C}\mathfrak{g}$ that  is defined by   $j(Z)=1\otimes j_0(Z)$  is an equivariant embedding making 
$(R_{\boldsymbol{X}}\otimes_{\C}\fg,K)$ into an algebraic family of Harish-Chandra pairs over $\boldsymbol{X}$.  We call this family \textit{the constant family of Harish-Chandra pairs over $\boldsymbol{X}$ with fiber $(\mathfrak{g},K)$}.

\subsubsection{The deformation family}
For the case of $\boldsymbol{X}=\C$ we define the subfamily $\g_d$ of the constant family of Lie algebras over $\boldsymbol{X}$ with fiber $\fg$ to be the collection of all sections  whose value at $0\in \boldsymbol{X}$ belongs to $\fk$.  By identifying $R_{\boldsymbol{X}}$ with $\C[t]$ we see that  
\[\boldsymbol{\mathfrak{g}}_d=(\mathbb{C}[t]\otimes_{\mathbb{C}}\mathfrak{k})\oplus (t\mathbb{C}[t]\otimes_{\mathbb{C}}\mathfrak{p}).\] 
The pair $(\boldsymbol{\mathfrak{g}}_d,K)$ forms an algebraic family of Harish-Chandra pairs over $\C$ that is a subfamily of the constant family. This is   \textit{the deformation family of (Harish-Chandra) pairs associated with $(\fg,K)$ (or with $G(\R)$)}.
 \subsubsection{Variants of the deformation family}\label{vdf}
Note that an algebraic  section $s:\C\longrightarrow \fg$  of the constant family $\C[t]\otimes_{\C}\fg$,  has a well defined derivative of any order. For any $n\in \mathbb{N}$, we  define $\g_{(n)}$ to be the subfamily of the constant family consisting of all sections $s$, such that the values of all their  derivatives at $0$  up to order  $(n-1)$ belong to $\fk$. Clearly 
\[\boldsymbol{\mathfrak{g}}_{(n)}=(\mathbb{C}[t]\otimes_{\mathbb{C}}\mathfrak{k})\oplus (t^{n}\mathbb{C}[t]\otimes_{\mathbb{C}}\mathfrak{p}),\] 
and in particular $\g_{(1)}=\g_d$. We define
\[\boldsymbol{\mathfrak{g}}_{(0)}:=(\mathbb{C}[t]\otimes_{\mathbb{C}}\mathfrak{k})\oplus (\mathbb{C}[t]\otimes_{\mathbb{C}}\mathfrak{p})=\C[t]\otimes_{\C}\fg.\] 
For any $n\in \N_0$, the pair $(\g_{(n)},K)$ is a subfamily of the constant family of Harish-Chandra pairs over $\boldsymbol{X}=\C$. 
For every $n\in \mathbb{N}$,
\[\boldsymbol{\mathfrak{g}}_{(n)}|_{z}\cong \begin{cases}
   \fg  &  z\neq 0\\
   \fk\ltimes \fp& z=0.
\end{cases} \]
We set $\k_{(0)}=\k:=\C[t]\otimes_{\C}\fk$, and for every $n\in \N_0$ we define $\boldsymbol{\fp}_{(n)}:=t^n\C[t]\otimes_{\C}\fp$. Hence
\[\g_{(n)}=\k\oplus \boldsymbol{\fp}_{(n)}\] as a direct sum of $K$-modules over $\C[t]$.

\subsection{Families of  modules}
In this subsection we recall the definitions of algebraic families of modules for families of Lie algebras and families of  Harish-Chandra pairs.
\subsubsection{$\g$-modules}
Let $\boldsymbol{\mathfrak{g}}$ be an algebraic family of complex Lie algebras over $\boldsymbol{X}$. An \textit{algebraic family of $\boldsymbol{\mathfrak{g}}$-modules} is a flat  $R_{\boldsymbol{X}}$-module $\boldsymbol{V}$  together with a morphism of Lie algebras over $R_{\boldsymbol{X}}$
\[\pi_{\g,\boldsymbol{V}}:\boldsymbol{\mathfrak{g}} \longrightarrow \operatorname{End}_{R_{\boldsymbol{X}}}(\boldsymbol{V}). \]
When $\boldsymbol{X}$ is $\C$ or $\C^{\times}$,
for every $z\in \boldsymbol{X}$,  the fiber of 
$\boldsymbol{V}$ over $z$ is the complex vector space 
\[\boldsymbol{V}|_{z}:=\boldsymbol{\boldsymbol{V}}/(I_{z}\boldsymbol{V}). \] 
It  carries a representation of the complex Lie algebra
$\boldsymbol{\mathfrak{g}}|_{z}$. The family    $\boldsymbol{V}$
is called \textit{generically irreducible} if all its fibers, except for at most   countably many, are irreducible. For a definition in a more general setup see \cite{eyalbern}.
It is called \textit{quasi-simple} if $\mathcal{Z}(\boldsymbol{\mathfrak{g}})$, the center of the universal enveloping algebra of $\boldsymbol{\mathfrak{g}}$, acts via multiplication by regular functions. When this is the case, there exists  a homomorphism of commutative $R_{\boldsymbol{X}}$-algebras
\[\chi_{\g,\boldsymbol{V}}:\mathcal{Z}(\boldsymbol{\mathfrak{g}})\longrightarrow R_{\boldsymbol{X}},\]
such that for every $z\in \mathcal{Z}(\boldsymbol{\mathfrak{g}})$ and $v\in\boldsymbol{V}$,
\[\pi_{\g,\boldsymbol{V}}(z)v =\chi_{\g,\boldsymbol{V}}(z)v.\] 
The homomorphism $\chi_{\g,\boldsymbol{V}}$ is called  the \textit{(central) infinitesimal  character} of $\boldsymbol{V}$.

\subsubsection{$(\boldsymbol{\mathfrak{g}},K)$-modules }
Let $(\boldsymbol{\mathfrak{g}},K)$ be an algebraic family of Harish-Chandra pairs over $\boldsymbol{X}$ with $K$ a complex reductive algebraic group.    An \textit{algebraic family of   $(\boldsymbol{\mathfrak{g}},K)$-modules} (or a \textit{$(\boldsymbol{\mathfrak{g}},K)$-module}) is an algebraic family of $\boldsymbol{\mathfrak{g}}$-modules $\boldsymbol{V}$ that carries an algebraic action of $K$ by automorphisms of $R_{\boldsymbol{X}}$-modules, that is, a homomorphism 
\[\pi_{K,\boldsymbol{V}}:K\longrightarrow \operatorname{Aut}_{R_{\boldsymbol{X}}}(\boldsymbol{V}),\]
% and an action of   $\boldsymbol{\mathfrak{g}}$ by endomorphisms of $R_{\boldsymbol{X}}$-modules, that is a homomorphsim of $R_{\boldsymbol{X}}$ Lie algebras complex;
% \[\rho_{\boldsymbol{V}}:\boldsymbol{\mathfrak{g}} \longrightarrow \operatorname{End}_{R_{\boldsymbol{X}}}(\boldsymbol{V}). \]
such that 
\begin{enumerate}
	\item The action morphism $\boldsymbol{\mathfrak{g}}\times \boldsymbol{V}\longrightarrow \boldsymbol{V} $ is $K$-equivariant:
	\[ \pi_{K,\boldsymbol{V}}(k)\left(\pi_{\g,\boldsymbol{V}}(X)(v)\right)=\pi_{\g,\boldsymbol{V}}(\pi_{K,\g}(k)X)(\pi_{K,\boldsymbol{V}}(k)(v)),\quad \forall v\in \boldsymbol{V},k\in K, X\in \boldsymbol{\mathfrak{g}}. \]
	\item The two actions of $\mathfrak{k}$ on $\boldsymbol{V}$ coincide: 
	\[\pi_{\g,\boldsymbol{V}}(j(X))=d\pi_{K,\boldsymbol{V}}(X), \quad \forall X\in \mathfrak{k}, \]
\end{enumerate}
where $d\pi_{K,\boldsymbol{V}}$ stands for the action of $\mathfrak{k}$ that is obtained from the action of $K$ on $\boldsymbol{V}$. 

Such a family is called \textit{admissible} if each $K$-isotypic component  of $\boldsymbol{V}$ is a free $R_{\boldsymbol{X}}$-module of finite rank. An algebraic family of  $(\boldsymbol{\mathfrak{g}},K)$-modules is called \textit{quasi-simple}, if it is quasi-simple as a family of $\boldsymbol{\mathfrak{g}}$-modules. It is called \textit{generically irreducible} if for all   $z\in\boldsymbol{X}$, except for at most countably many, the fiber $\boldsymbol{V}|_z$ is an irreducible $(\boldsymbol{\mathfrak{g}}|_{z},K)$-module.

It follows from Dixmier's Lemma
 that a generically irreducible admissible family of  $(\boldsymbol{\mathfrak{g}},K)$-modules must be quasi-simple \cite[Lemma 4.1.1]{eyalbern}. 

\subsection{Infinitesimal character with respect to a fundamental Cartan subfamily }\label{sec3.4}

In this subsection we lift the Harish-Chandra isomorphism to a morphism of families of commutative algebras with respect to a fundamental   Cartan subalgebra   and define the related notion of   infinitesimal character  in this context of families.

%\subsection{Infinitesimal character with respect to a Cartan subfamily of $\boldsymbol{\mathfrak{g}}_d$}
\subsubsection{Harish-Chandra isomorphism for constant families}
As mentioned in Section \ref{The HC iso}, 
we fix $\theta$-stable triangular decompositions 
\[\mathfrak{g}=\mathfrak{n}^-\oplus \mathfrak{h}\oplus \mathfrak{n}^+, \quad \mathfrak{k}=\mathfrak{n}_{\mathfrak{k}}^-\oplus \mathfrak{t}\oplus \mathfrak{n}^+_{\mathfrak{k}}, \]
with $\mathfrak{h}=\mathfrak{t}\oplus \mathfrak{a}$ a fundamental Cartan subalgebra and   $\mathfrak{n}^{\pm}_{\mathfrak{k}}\subseteq \mathfrak{n}^{\pm}$.
We let $\operatorname{HC}_{\fh}^{\mathfrak{g}}:\mathcal{Z}(\mathfrak{g})\longrightarrow \operatorname{S}({\mathfrak{h}})^{W(\fg,\fh)}$ be the Harish-Chandra isomorphism of $\fg$, and let  $\operatorname{HC}_{\ft}^{\mathfrak{k}}:\mathcal{Z}({\mathfrak{k}})\longrightarrow \operatorname{S}({\mathfrak{t}})^{{W(\fk,\ft)}}$ be the  Harish-Chandra isomorphism  for $\mathfrak{k}$. 
By $R_{\boldsymbol{X}}$-linear extension we immediately obtain a Harish-Chandra isomorphism  for the constant families of commutative algebras  over ${\boldsymbol{X}}$:
\begin{eqnarray}\nonumber
&& \boldsymbol{\operatorname{HC}_{\fh}^{\mathfrak{g}}}=\boldsymbol{\operatorname{HC}_{\fh}^{\mathfrak{g}}}/\boldsymbol{X}=\mathbb{I}_{R_{\boldsymbol{X}}}\otimes \operatorname{HC}_{\fh}^{\mathfrak{g}}:\mathcal{Z}(R_{\boldsymbol{X}}\otimes_{\C}\mathfrak{g})\longrightarrow \operatorname{S}(R_{\boldsymbol{X}}\otimes_{\C}{\mathfrak{h}})^{W(\fg,\fh)}, \\ \nonumber
&& \boldsymbol{\operatorname{HC}_{\ft}^{\mathfrak{k}}}=\boldsymbol{\operatorname{HC}_{\ft}^{\mathfrak{k}}}/\boldsymbol{X}=\mathbb{I}_{R_{\boldsymbol{X}}}\otimes \operatorname{HC}_{\ft}^{\mathfrak{k}}:\mathcal{Z}(R_{\boldsymbol{X}}\otimes_{\C}\mathfrak{k})\longrightarrow \operatorname{S}(R_{\boldsymbol{X}}\otimes_{\C}{\mathfrak{t}})^{W(\fk,\ft)},    
\end{eqnarray}
where we use the canonical isomorphisms $R_{\boldsymbol{X}}\otimes_{\C}\mathcal{Z}(\mathfrak{g})\cong \mathcal{Z}(R_{\boldsymbol{X}}\otimes_{\C}\mathfrak{g})$, $R_{\boldsymbol{X}}\otimes_{\C}\operatorname{S}({\mathfrak{h}})^{W(\fg,\fh)}\cong \operatorname{S}(R_{\boldsymbol{X}}\otimes_{\C}{\mathfrak{h}})^{W(\fg,\fh)}$ and similarly for $\fk$. Also note that the actions of the two Weyl groups on the families are given by $R_{\boldsymbol{X}}$-linear extension of their actions on the complex algebras.

\subsubsection{Infinitesimal character with respect to $R_{\boldsymbol{X}}\otimes_{\C}\fh$}
We shall call the family $R_{\boldsymbol{X}}\otimes_{\C}\fh$ a \textit{fundamental Cartan subfamily of $R_{\boldsymbol{X}}\otimes_{\C}\fg$.}
\begin{definition*}
    Let  $\boldsymbol{V}$ be a quasi-simple  algebraic family of   $(R_{\boldsymbol{X}}\otimes_{\C}\fg,K)$-modules with a corresponding character 
    \[\chi_{\boldsymbol{V}}:\mathcal{Z}(R_{\boldsymbol{X}}\otimes_{\C}\fg)\longrightarrow R_{\boldsymbol{X}}.\]
    We say that $\boldsymbol{V}$ \textit{has an infinitesimal character with respect to $R_{\boldsymbol{X}}\otimes_{\C}\fh$} if there is $\boldsymbol{\lambda} \in  (R_{\boldsymbol{X}}\otimes_{\C}\fh)^*$ such that $\chi_{\boldsymbol{V}}=\widehat{\boldsymbol{\lambda}}\circ \boldsymbol{\operatorname{HC}_{\fh}^{\mathfrak{g}}}$, where $\widehat{\boldsymbol{\lambda}}$ is the unique unital  $R_{\boldsymbol{X}}$-algebra homomorphism from  $\operatorname{S}(R_{\boldsymbol{X}}\otimes_{\C}\mathfrak{h})$ into $R_{\boldsymbol{X}}$ such that $\widehat{\boldsymbol{\lambda}}(\boldsymbol{h})=\boldsymbol{\lambda}(\boldsymbol{h})$ for every $\boldsymbol{h}\in R_{{\boldsymbol{X}}}\otimes_{\C}{\mathfrak{h}}$. 
\end{definition*}

\subsubsection{The Harish-Chandra morphism of  $\g_{(n)}$}
Throughout this section we focus on the case with $\boldsymbol{X}=\C$ and $R_{\boldsymbol{X}}=\C[t]$. We shall simplify the notation and write $R$ instead of $R_{\boldsymbol{X}}$. For every complex subspace $\mathfrak{l}$ of $\fg$, we denote  the constant family over $\C$ with fiber $\mathfrak{l}$  by $\boldsymbol{\mathfrak{l}}$. 
For every $n\in \N_0$, we denote by $\boldsymbol{\mathfrak{l}}_{(n)}$  the subspace of $\boldsymbol{\mathfrak{g}}_{(n)}$ consisting of all sections with values in $\mathfrak{l}$.
If $\mathfrak{l}$ is a Lie subalgebra of $\fg$
then $\boldsymbol{\mathfrak{l}}_{(n)}$ is an algebraic family of complex Lie algebras over $\C$. Of course $\boldsymbol{\mathfrak{l}}=\boldsymbol{\mathfrak{l}}_{(0)}$.

We call the family of abelian Lie algebras $\boldsymbol{\mathfrak{h}}_{(n)}$ a \textit{fundamental Cartan subfamily of} $\boldsymbol{\mathfrak{g}}_{(n)}$. Note that if $\mathfrak{l}$ is stable under the action of one of the two mentioned  Weyl groups then so is $\boldsymbol{\mathfrak{l}}_{(n)}$. 

\begin{proposition*}
    For every $n\in \N_0$, $\boldsymbol{\operatorname{HC}_{\fh}^{\mathfrak{g}}}(\mathcal{Z}(\g_{(n)}))\subseteq  \operatorname{S}(\boldsymbol{\mathfrak{h}}_{(n)})^{W(\fg,\fh)}$.
\end{proposition*}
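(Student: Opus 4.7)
The plan combines a structural inclusion of centers, a $\fp$-degree filtration on $\mathcal{U}(\fg)$, and a choice of PBW basis adapted simultaneously to the Cartan and triangular decompositions of $\fg$.

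First I would show $\mathcal{Z}(\g_{(n)}) \subseteq \mathcal{Z}(R \otimes_{\C} \fg) = R \otimes_{\C} \mathcal{Z}(\fg)$. For any $z \in \mathcal{Z}(\g_{(n)})$ and $Y \in \fp$, the identity $t^n [1 \otimes Y, z] = [t^n \otimes Y, z] = 0$, combined with the fact that $t^n$ is not a zero divisor in the free $R$-module $R \otimes_{\C} \mathcal{U}(\fg)$, forces $[1 \otimes Y, z] = 0$; together with the built-in commutation with $R \otimes \fk$ this yields the claim. Consequently $\boldsymbol{\operatorname{HC}_{\fh}^{\mathfrak{g}}}$ is defined on $z$ and automatically $W(\fg,\fh)$-invariant. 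Writing $\boldsymbol{\operatorname{HC}_{\fh}^{\mathfrak{g}}} = (\rho\text{-shift}) \circ \pi'$, where $\pi'$ is the unshifted projection onto the $\mathcal{U}(\fh)$-summand of $\mathcal{U}(\fg) = \mathcal{U}(\mathfrak{n}^-)\mathcal{U}(\fh)\mathcal{U}(\mathfrak{n}^+)$ extended $R$-linearly, one checks that the $\rho$-shift preserves $\operatorname{S}(\boldsymbol{\mathfrak{h}}_{(n)})$ (since $t^k \otimes H - \rho(H) t^k$ remains in $\operatorname{S}(\boldsymbol{\mathfrak{h}}_{(n)})$ whenever $t^k \otimes H$ does). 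It thus suffices to show $\pi'(\mathcal{Z}(\g_{(n)})) \subseteq \operatorname{S}(\boldsymbol{\mathfrak{h}}_{(n)})$.

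Let $F^b_{\fp} \mathcal{U}(\fg)$ be the $\C$-linear span of products of elements of $\fg$ containing at most $b$ factors from $\fp$. Each generator in $t^n R \otimes \fp$ of $\mathcal{U}(\g_{(n)})$ contributes both one $\fp$-factor and one power of $t^n$, so one verifies
\[
\mathcal{U}(\g_{(n)}) \;=\; \sum_{b \ge 0} t^{nb} R \otimes_{\C} F^b_{\fp} \mathcal{U}(\fg).
\]
The crucial input is then the degree estimate $\pi'(F^b_{\fp} \mathcal{U}(\fg)) \subseteq \bigoplus_{d \le b} \operatorname{S}(\ft) \otimes_{\C} \operatorname{S}^d(\mathfrak{a})$. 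To prove it I would exploit that $\fh$ is fundamental (so has no real roots) and that $\Delta^+(\fg,\fh)$ is $\theta$-stable: each imaginary root space lies wholly in $\fk$ or $\fp$, and for each complex pair $\{\alpha,\theta\alpha\}\subseteq\Delta^+$ the $\theta$-stable $2$-plane $\fg_\alpha \oplus \fg_{\theta\alpha}$ sits inside $\mathfrak{n}^+$ and decomposes as $(\fk\text{-part})\oplus(\fp\text{-part})$. Consequently each of $\mathfrak{n}^-, \fh, \mathfrak{n}^+$ splits $\theta$-equivariantly, so one may pick an ordered PBW basis of $\fg$ in which every basis vector lies simultaneously in $\fk$ or $\fp$ and in exactly one of $\mathfrak{n}^-, \fh, \mathfrak{n}^+$. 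For a PBW monomial $u_- h u_+$ in this basis, $\pi'$ annihilates it unless $u_- = u_+ = 1$; in that surviving case $h$ is a pure $\fh$-monomial whose $\mathfrak{a}$-degree equals its total $\fp$-count and so is at most $b$.

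Combining these pieces: for $z \in \mathcal{Z}(\g_{(n)})$ expand $z = \sum_b \zeta_b$ with $\zeta_b \in t^{nb} R \otimes F^b_{\fp} \mathcal{U}(\fg)$. The degree estimate gives $\pi'(\zeta_b) \in t^{nb} R \otimes \bigoplus_{d \le b} \operatorname{S}(\ft) \otimes \operatorname{S}^d(\mathfrak{a})$; using $t^{nb} R \subseteq t^{nd} R$ for each $d \le b$, this sits in $\bigoplus_{d \le b} t^{nd} R \otimes \operatorname{S}(\ft) \otimes \operatorname{S}^d(\mathfrak{a}) \subseteq \operatorname{S}(\boldsymbol{\mathfrak{h}}_{(n)})$. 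Summing over $b$ yields $\pi'(z) \in \operatorname{S}(\boldsymbol{\mathfrak{h}}_{(n)})$, hence $\boldsymbol{\operatorname{HC}_{\fh}^{\mathfrak{g}}}(z) \in \operatorname{S}(\boldsymbol{\mathfrak{h}}_{(n)})^{W(\fg,\fh)}$. The most delicate step is the degree estimate: it requires a PBW basis of $\fg$ simultaneously adapted to both $\fg=\fk\oplus\fp$ and $\fg = \mathfrak{n}^-\oplus\fh\oplus\mathfrak{n}^+$. Without the absence of real roots (guaranteed by fundamentality of $\fh$) and the $\theta$-stable positive system, a complex root vector would lie in neither $\fk$ nor $\fp$, $\mathfrak{n}^\pm$ would fail to split $\theta$-equivariantly, and the $\mathfrak{a}$-degree/$\fp$-degree bookkeeping would collapse.
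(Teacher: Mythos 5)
Your proof is correct, but it takes a genuinely different (and more computational) route than the paper's. The paper observes that the $\theta$-stable triangular decomposition of $\fg$ gives rise directly to a triangular decomposition $\g_{(n)} = \boldsymbol{\mathfrak{n}}_{(n)}^- \oplus \boldsymbol{\mathfrak{h}}_{(n)} \oplus \boldsymbol{\mathfrak{n}}_{(n)}^+$ (citing \cite{eyallietheory}), hence a direct-sum decomposition of $\mathcal{U}(\g_{(n)})$ whose $\operatorname{S}(\boldsymbol{\mathfrak{h}}_{(n)})$-projection is simply the restriction of the constant family's projection; the containment is then immediate, and the $\rho$-shift preserving $\operatorname{S}(\boldsymbol{\mathfrak{h}}_{(n)})$ plus the inclusion $\mathcal{Z}(\g_{(n)}) \subseteq \mathcal{Z}(\boldsymbol{\fg})$ finish it off. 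Your argument replaces that structural step by a $\fp$-degree filtration $F^b_{\fp}\mathcal{U}(\fg)$, the containment $\mathcal{U}(\g_{(n)}) \subseteq \sum_b t^{nb}R\otimes F^b_{\fp}\mathcal{U}(\fg)$, and a PBW degree estimate showing $\pi'(F^b_{\fp}\mathcal{U}(\fg)) \subseteq \bigoplus_{d\le b}\operatorname{S}(\ft)\otimes\operatorname{S}^d(\mathfrak{a})$; the latter needs a PBW basis simultaneously adapted to $\fk\oplus\fp$ and to $\mathfrak{n}^-\oplus\fh\oplus\mathfrak{n}^+$, which is exactly the content of $\fh$ being fundamental and $\Delta^+$ being $\theta$-stable. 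In effect, you are unpacking (and reproving) the fact that the triangular decomposition descends to $\g_{(n)}$ — which the paper treats as already known — and then doing careful $t$-power and $\fp$-degree bookkeeping to recover the same containment. Your version is more self-contained and makes the role of fundamentality explicit; the paper's is shorter and cleaner given the citation it relies on. Two small remarks: the displayed equality $\mathcal{U}(\g_{(n)}) = \sum_b t^{nb}R\otimes F^b_{\fp}\mathcal{U}(\fg)$ should really be $\subseteq$ (the reverse inclusion is also true but unnecessary and would need a separate check), and the observation that the $\fp$-count is non-increasing under PBW reordering — because $[\fp,\fp]\subseteq\fk$, $[\fk,\fp]\subseteq\fp$, $[\fk,\fk]\subseteq\fk$ — deserves to be stated explicitly, as it is the point that makes $F^b_{\fp}$ a genuine algebra filtration and lets the degree estimate go through.
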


\begin{proof}
Each $\lambda \in \mathfrak{h}^*$ can be extended to $\mathbb{I}_{R}\otimes {\lambda} \in \boldsymbol{\mathfrak{h}}^*=\operatorname{Hom}_{\mathbb{C}[t]}(\boldsymbol{\mathfrak{h}},\mathbb{C}[t])$ that is defined via $(\mathbb{I}_{R}\otimes {\lambda})(f\otimes h)=f\lambda(h)$ for any $f\otimes h\in \boldsymbol{\mathfrak{h}}$. Each $\boldsymbol{\lambda} \in \boldsymbol{\mathfrak{h}}^*$ defines an algebra homomorphism $\widetilde{\boldsymbol{\lambda}}:\operatorname{S}(\boldsymbol{\mathfrak{h}})\longrightarrow \operatorname{S}(\boldsymbol{\mathfrak{h}}) $. It is the unique unital algebra homomorphism satisfying $\widetilde{\boldsymbol{\lambda}}(\boldsymbol{h})=\boldsymbol{h}-\boldsymbol{\lambda}(\boldsymbol{h})1$ for any $\boldsymbol{h}\in \boldsymbol{\mathfrak{h}}$.  
For every $n\in \N_0$, and $\boldsymbol{\lambda} \in \boldsymbol{\mathfrak{h}}^*$, $\widetilde{\boldsymbol{\lambda}}(\boldsymbol{\mathfrak{h}}_{(n)})\subset \boldsymbol{\mathfrak{h}}_{(n)} \oplus \C[t] \subseteq \operatorname{S}(\boldsymbol{\mathfrak{h}}_{(n)}) $ and as a result 
\begin{eqnarray}\label{func}
    &&\widetilde{\boldsymbol{\lambda}}(\operatorname{S}(\boldsymbol{\mathfrak{h}}_{(n)}))\subseteq \operatorname{S}(\boldsymbol{\mathfrak{h}}_{(n)}).
\end{eqnarray} 
As explained in \cite{eyallietheory} for  the case of the deformation family, the $\theta$-stable  triangular decomposition of $\fg$ 
 gives rise to the triangular decomposition
\[\g_{(n)}=\boldsymbol{\mathfrak{n}}_{(n)}^-\oplus \boldsymbol{\mathfrak{h}}_{(n)}\oplus \boldsymbol{\mathfrak{n}}_{(n)}^+ \]
and the corresponding decomposition of $\mathcal{U}(\boldsymbol{\mathfrak{g}}_{(n)})$
\begin{eqnarray}\nonumber
&&\mathcal{U}(\boldsymbol{\mathfrak{g}}_{(n)})=\operatorname{S}(\boldsymbol{\mathfrak{h}}_{(n)}) \oplus \left(\boldsymbol{\mathfrak{n}}_{(n)}^-\mathcal{U}(\boldsymbol{\mathfrak{g}}_{(n)})+\mathcal{U}(\boldsymbol{\mathfrak{g}}_{(n)}) \boldsymbol{\mathfrak{n}}_{(n)}^+ \right).\end{eqnarray}
We denote the corresponding  projection on the first direct summand by \begin{equation*}
    P_{\boldsymbol{\mathfrak{h}}_{(n)}}:\mathcal{U}(\boldsymbol{\mathfrak{g}}_{(n)})\longrightarrow \operatorname{S}(\boldsymbol{\mathfrak{h}}_{(n)}).
    \end{equation*}
Since 
\begin{eqnarray}\nonumber
&&\operatorname{S}(\boldsymbol{\mathfrak{h}}_{(n)})\subseteq \operatorname{S}(\boldsymbol{\mathfrak{h}}), \hspace{3mm}
\left(\boldsymbol{\mathfrak{n}}_{(n)}^-\mathcal{U}(\boldsymbol{\mathfrak{g}}_{(n)})+\mathcal{U}(\boldsymbol{\mathfrak{g}}_{(n)}) \boldsymbol{\mathfrak{n}}_{(n)}^+ \right)\subseteq \left(\boldsymbol{\mathfrak{n}}^-\mathcal{U}(\boldsymbol{\mathfrak{g}})+\mathcal{U}(\boldsymbol{\mathfrak{g}}) \boldsymbol{\mathfrak{n}}^+ \right),\end{eqnarray}
$P_{\boldsymbol{\mathfrak{h}}_{(n)}}$  is the restriction of the projection of the constant family 
$P_{\boldsymbol{\mathfrak{h}}_{(0)}}=P_{\boldsymbol{\mathfrak{h}}}$ to 
${\mathcal{U}(\boldsymbol{\mathfrak{g}}_{(n)})}$. 
By definition 
\[\boldsymbol{\operatorname{HC}_{\fh}^{\mathfrak{g}}}=\widetilde{\mathbb{I}_{R}\otimes {\rho}}  \circ P_{\boldsymbol{\mathfrak{h}}},\]
where $\rho$ is the half sum of those roots appearing in $\mathfrak{n}^+$. Hence,

\begin{eqnarray}\nonumber 
    && \boldsymbol{\operatorname{HC}_{\fh}^{\mathfrak{g}}}(\mathcal{Z}(\boldsymbol{\mathfrak{g}}_{(n)}))=\widetilde{\mathbb{I}_{R}\otimes {\rho}}  \left( P_{\boldsymbol{\mathfrak{h}}}(\mathcal{Z}(\boldsymbol{\mathfrak{g}}_{(n)})) \right)=\widetilde{\mathbb{I}_{R}\otimes {\rho}}  \left( P_{\boldsymbol{\mathfrak{h}_{(n)}}}(\mathcal{Z}(\boldsymbol{\mathfrak{g}}_{(n)})) \right)\subseteq \\ \nonumber
    && \widetilde{\mathbb{I}_{R}\otimes {\rho}}\left( \operatorname{S}(\boldsymbol{\mathfrak{h}}_{(n)})\right)\subseteq \operatorname{S}(\boldsymbol{\mathfrak{h}}_{(n)}).
\end{eqnarray}
Since $\mathcal{Z}(\boldsymbol{\mathfrak{g}}_{(n)})\subseteq \mathcal{Z}(\boldsymbol{\mathfrak{g}})$,
\begin{eqnarray}
    && \boldsymbol{\operatorname{HC}_{\fh}^{\mathfrak{g}}}(\mathcal{Z}(\boldsymbol{\mathfrak{g}}_{(n)}))\subseteq \operatorname{S}(\boldsymbol{\mathfrak{h}})^{W(\fg,\fh)}. 
\end{eqnarray}
Hence 
\begin{eqnarray}
    && \boldsymbol{\operatorname{HC}_{\fh}^{\mathfrak{g}}}(\mathcal{Z}(\boldsymbol{\mathfrak{g}}_{(n)}))\subseteq \operatorname{S}(\boldsymbol{\mathfrak{h}})^{W(\fg,\fh)}\cap  \operatorname{S}(\boldsymbol{\mathfrak{h}}_{(n)})=\operatorname{S}(\boldsymbol{\mathfrak{h}}_{(n)})^{W(\fg,\fh)}.
\end{eqnarray}
\end{proof}    

 \begin{definition*}
     For every $n\in \N_0,$ \textit{the Harish-Chandra homomorphism for $\boldsymbol{\mathfrak{g}}_{(n)}$ with respect to the fundamental Cartan subfamily 
     $\boldsymbol{\mathfrak{h}}_{(n)}$} is the map $$\boldsymbol{\operatorname{HC}_{\fh_{(n)}}^{\mathfrak{g}_{(n)}}}:=\boldsymbol{\operatorname{HC}_{\fh}^{\mathfrak{g}}}|_{\mathcal{Z}(\boldsymbol{\mathfrak{g}}_{(n)})}:\mathcal{Z}(\boldsymbol{\mathfrak{g}}_{(n)})\longrightarrow \operatorname{S}(\boldsymbol{\mathfrak{h}}_{(n)})^{W(\fg,\fh)}.$$
 \end{definition*}
As a restriction of an isomorphism of algebraic families of commutative complex algebras over  $\C$, $\boldsymbol{\operatorname{HC}_{\fh_{(n)}}^{\mathfrak{g}_{(n)}}}$ is a monomorphism of algebraic families of complex commutative  algebras over $\C$.

\subsubsection{Infinitesimal character with respect to  $\boldsymbol{\mathfrak{h}}_{(n)}$}

\begin{definition*}
    Let  $\boldsymbol{V}$ be a quasi-simple  algebraic family of   $(\boldsymbol{\mathfrak{g}}_{(n)},K)$-modules with a corresponding character 
    \[\chi_{\boldsymbol{V}}:\mathcal{Z}(\boldsymbol{\mathfrak{g}}_{(n)})\longrightarrow R_{\boldsymbol{X}}.\]
    We say that $\boldsymbol{V}$ \textit{has an infinitesimal character with respect to $\boldsymbol{\mathfrak{h}}_{(n)}$} if there is $\boldsymbol{\lambda} \in \boldsymbol{\mathfrak{h}}_{(n)}^*$ such that $\chi_{\boldsymbol{V}}=\widehat{\boldsymbol{\lambda}}\circ \boldsymbol{\operatorname{HC}_{\fh_{(n)}}^{\mathfrak{g}_{(n)}}}$, where $\widehat{\boldsymbol{\lambda}}$ is the unique unital  $\mathbb{C}[t]$-algebra homomorphism from  $\operatorname{S}(\boldsymbol{\mathfrak{h}})$ into $\mathbb{C}[t]$ such that $\widehat{\boldsymbol{\lambda}}(\boldsymbol{h})=\boldsymbol{\lambda}(\boldsymbol{h})$ for every $\boldsymbol{h}\in \boldsymbol{\mathfrak{h}}_{(n)}$. If in addition the infinitesimal character  is of the form $1\otimes \lambda$ for some $\lambda \in \mathfrak{h}^*$ we say that $\boldsymbol{V}$ has   a \textit{constant} infinitesimal character with respect to $\boldsymbol{\mathfrak{h}}_{(n)}$.
\end{definition*}
\begin{remark*}
    As demonstrated in \cite{eyallietheory} for the case of the deformation family,   not every quasi-simple algebraic family of $(\boldsymbol{\mathfrak{g}}_{(n)},K)$-modules has an infinitesimal character with respect to a fundamental Cartan subfamily.
\end{remark*}

\begin{example*}[Verma-like families of modules for a $\theta$-fixed Cartan] 
	We keep the above notation but here we shall further assume that  $\mathfrak{h}$  is a $\theta$-fixed Cartan subalgebra of $\fg$. In particular $\fh=\ft\subset \fk$.  For a fixed $n \in \N_0$, we set $\boldsymbol{\mathfrak{b}}^{\pm}_{(n)}:=\boldsymbol{\mathfrak{t}} \oplus \boldsymbol{\mathfrak{n}}_{(n)}^{\pm}$. For any $\boldsymbol{\lambda} \in \boldsymbol{\mathfrak{t}}^*$, we let $\mathbb{C}[t]_{\boldsymbol{\lambda}}$ be the free rank one $\mathbb{C}[t]$-module $\mathbb{C}[t]$ equipped with the  $\boldsymbol{\mathfrak{b}}^{+}_{(n)}$-module structure $\rho_{\boldsymbol{\lambda}}: \boldsymbol{\mathfrak{b}}^{+}_{(n)}\longrightarrow \operatorname{End}_{\mathbb{C}[t]}(\mathbb{C}[t]_{\boldsymbol{\lambda}})$  in which 
	\[\rho_{\boldsymbol{\lambda}}(X)1=\begin{cases}
		0 & X\in \boldsymbol{\mathfrak{n}}_{(n)}^+ \\
		\boldsymbol{\lambda}(X)1& X\in \boldsymbol{\mathfrak{t}}.
	\end{cases} \]
	We set $V(\boldsymbol{\lambda}):=\mathcal{U}(\boldsymbol{\mathfrak{g}}_{(n)})\otimes_{\mathcal{U}(\boldsymbol{\mathfrak{b}}_{(n)}^+)}\mathbb{C}[t]_{\boldsymbol{\lambda}}$ equipped with the $\boldsymbol{\mathfrak{g}}_{(n)}$ module structure coming from left multiplication in  $\mathcal{U}(\boldsymbol{\mathfrak{g}}_{(n)})$. By PBW-theorem, as $\mathbb{C}[t]$-modules we have $V(\boldsymbol{\lambda})\simeq \mathcal{U}(\boldsymbol{\mathfrak{n}}_{(n)}^-)$ and in particular $V(\boldsymbol{\lambda})$ is a free $\mathbb{C}[t]$-module (of infinite rank) and hence flat. 
    Since the Cartan $\fh=\ft$ is $\theta$-fixed it can be shown that  
    \begin{eqnarray}\nonumber
&&\mathcal{Z}(\boldsymbol{\mathfrak{g}}_{(n)})\subseteq \operatorname{S}(\boldsymbol{\mathfrak{t}}) \oplus \boldsymbol{\mathfrak{n}}_{(n)}^-\mathcal{U}(\boldsymbol{\mathfrak{g}}_{(n)}) \boldsymbol{\mathfrak{n}}_{(n)}^+ .\end{eqnarray}
    For any $Z\in \mathcal{Z}(\boldsymbol{\mathfrak{g}}_{(n)})$, we have 
	\begin{eqnarray}\nonumber
		&& Z( 1\otimes 1)=Z\otimes 1=(Z-P_{\boldsymbol{\mathfrak{t}}}(Z))\otimes 1+P_{\boldsymbol{\mathfrak{t}}}(Z)\otimes 1=\\ \nonumber
		&& 0+1\otimes (\rho_{\boldsymbol{\lambda}} (P_{\boldsymbol{\mathfrak{t}}}(Z))1)= \widehat{\boldsymbol{\lambda}}(P_{\boldsymbol{\mathfrak{t}}}(Z)) (1\otimes 1)= (\widehat{\boldsymbol{\lambda}}\circ P_{\boldsymbol{\mathfrak{t}}})(Z) (1\otimes 1)=\\ \nonumber
		&&(\widehat{\boldsymbol{\lambda}}\circ (\widetilde{\mathbb{I}_{R}\otimes {\rho}})^{-1} \circ \widetilde{\mathbb{I}_{R}\otimes {\rho}} \circ P_{\boldsymbol{\mathfrak{t}}})(Z) (1\otimes 1)=\\ \nonumber
		&&(\widehat{\boldsymbol{\lambda}}\circ (\widetilde{\mathbb{I}_{R}\otimes {-\rho}}) \circ \boldsymbol{\operatorname{HC}_{\fh}^{\mathfrak{g}}} )(Z) (1\otimes 1)=((\widehat{\boldsymbol{\lambda}+(\mathbb{I}_{R}\otimes \rho)})    \circ \boldsymbol{\operatorname{HC}_{\fh_{(n)}}^{\mathfrak{g}_{(n)}}} )(Z) (1\otimes 1).
	\end{eqnarray}
	Hence $V(\boldsymbol{\lambda})$ is quasi-simple with infinitesimal character with respect to $\boldsymbol{\mathfrak{h}}_{(n)}=\boldsymbol{\mathfrak{t}}$ that is given by $\boldsymbol{\lambda}+(\mathbb{I}_{R}\otimes \rho)\in \boldsymbol{\mathfrak{t}}^*$.
\end{example*}

\section{A unified algebraic approach for Dirac operators}\label{sec4}
We shall be interested  in Dirac operators of the families  $(\g_{(n)},K)$.
  For that purpose we introduce a more abstract and  unified framework that covers the cases of $(\g_{(n)},K)$ and much more. This allows us to introduce Dirac operators for families. Along the way we construct Clifford algebras and spin modules for these families and prove the expected relation between the square of the Dirac operator and the Casimir of the family of Lie algebras.

  Throughout this section we shall keep our assumptions on $G(\R)$ and keep the notations introduced before. 
  In particular $G(\R)$ is a connected real reductive group.  The invariant form $\beta$ on $\fg$ is positive definite on $\fp^{\sigma}$ and negative definite on  $\fk^{\sigma}$ and its restriction to $[\fg,\fg]$ coincides with its  Killing form. 
  We fix $\theta$-stable triangular decompositions 
  \[\mathfrak{g}=\mathfrak{n}^-\oplus \mathfrak{h}\oplus \mathfrak{n}^+, \quad \mathfrak{k}=\mathfrak{n}_{\mathfrak{k}}^-\oplus \mathfrak{t}\oplus \mathfrak{n}^+_{\mathfrak{k}}, \]
  with $\mathfrak{h}=\mathfrak{t}\oplus \mathfrak{a}$ being a fundamental Cartan subalgebra of $\fg$ and   $\mathfrak{n}^{\pm}_{\mathfrak{k}}\subseteq \mathfrak{n}^{\pm}$.
\subsection{The families of Harish-Chandra pairs}\label{subsection41}
In this subsection we introduce the class of families of Harish-Chandra pairs that shall be used throughout the paper.

From now on we let $R$ be a fixed principal ideal domain containing $\C$. In practice, we shall only use the cases in which $R$ is $\C$,  $\C[t]$ or $\C[t,t^{-1}]$.

We let $(R\otimes_{\C}\fg,K)$ be the constant family of Harish-Chandra pairs over the complex affine algebraic variety $\operatorname{Spec}_m(R)$ (the variety of maximal ideals of $R$). 

We shall assume that $(\g,K)$ is a subfamily of the constant family $(R\otimes_{\C}\fg,K)$ such that there exists a non-zero ideal $I=I_{\g}$ of $R$ and
\[\g=\k\oplus \boldsymbol{\fp}, \quad \text{with} \quad \k=(R\otimes_{\C}\fk),  \boldsymbol{\fp}=(I_{\fg} \otimes_{\C}\fp). \]
In particular $\g$ is stable under the involution $\boldsymbol{\theta}:=\mathbb{I}_R\times \theta$ of $R\otimes_{\C}\fg$, and the above decomposition is the eigenspace decomposition of the involution $\boldsymbol{\theta}|_{\g}$.

Since $R$ is a principal ideal domain, there is a non-zero $r=r_{\g}\in R$ such that $\langle r_{\g} \rangle$, the ideal generated by $r_{\g}$, is $I_{\g}$. The generator $r$ is unique up to multiplication by an invertible element of $R$. We fix such a generator  once and for all.

\begin{example*}
    The families $\g_{(n)}$ from Section \ref{vdf} satisfy the above assumptions with $R=\C[t]$, and
    \[\k=\mathbb{C}[t]\otimes_{\mathbb{C}}\mathfrak{k},\quad \p= \langle t^{n}\rangle \otimes_{\mathbb{C}}\mathfrak{p}.\] 
    
\end{example*}
\begin{remark*}
The families $(\g_{(n)},K)$ essentially exhaust all families satisfying our assumptions. If $(\g,K)$ is a subfamily of the constant family $(R\otimes_{\C}\fg,K)$ satisfying the abovementioned assumptions,  the special fibers of $\g$ are isolated and by restriction to a small enough neighborhood we can assume that there is at most one special fiber. Any subfamily of the constant family  satisfying the above assumptions  and with at most one special fiber, must be a pullback under an affine automorphism of $\mathbb{A}^1_{\C}$ of one of the families $(\g_{(n)},K)$.  
See the related work on classification  of    families of Harish-Chandra pairs within the constant family  of $SL(2,\R)$ \cite{subag2024extensions}.
\end{remark*}

\subsection{The quadratic Harish-Chandra pairs}
In this subsection we show that a family  of Harish-Chandra pairs in our class has a natural symmetric form.

\subsubsection{The symmetric form on the constant family $(R\otimes_{\C}\fg,K)$ }
We let $\beta$   be a symmetric bilinear form on $\fg$ as in Section \ref{invform}. In particular $(\fg,K)$ is a quadratic Harish-Chandra pair (over $\operatorname{Spec}(\C)$).
We denote the $R$-bilinear extension of $\beta$ to $(R\otimes_{\C} \fg)\times (R\otimes_{\C} \fg) $ by $1_R \otimes \beta$.

\begin{lemma*}
    The form $1_R\otimes {\beta}$ on $R\otimes_{\mathbb{C}}\mathfrak{g}$ is orthogonalizable. 
\end{lemma*}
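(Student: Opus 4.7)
The plan is essentially to pull the orthogonalization back to the fiber over a point and then extend scalars. First I would recall from the setup in Section \ref{invform} that $\beta$ is a non-degenerate symmetric bilinear form on the finite-dimensional complex vector space $\mathfrak{g}$, so the problem reduces to producing an orthonormal basis of $(\mathfrak{g},\beta)$ as a complex quadratic space.

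Next I would appeal to the standard fact that over $\mathbb{C}$ (or over any field of characteristic different from $2$ in which every element has a square root) a non-degenerate symmetric bilinear form on a finite-dimensional vector space admits an orthonormal basis. Concretely, one starts from any basis of $\mathfrak{g}$, performs Gram--Schmidt orthogonalization with respect to $\beta$ to obtain an orthogonal basis $\{e_1,\ldots,e_m\}$ with $\beta(e_i,e_j)=c_i\delta_{ij}$ and $c_i\in\mathbb{C}^{\times}$, and then rescales each $e_i$ by a square root of $c_i^{-1}$ in $\mathbb{C}$ to obtain vectors $f_i$ satisfying $\beta(f_i,f_j)=\delta_{ij}$.

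Given such an orthonormal basis $\{f_1,\ldots,f_m\}$ of $(\mathfrak{g},\beta)$, I would then observe that $\{1\otimes f_1,\ldots,1\otimes f_m\}$ is a basis of the free $R$-module $R\otimes_{\mathbb{C}}\mathfrak{g}$, and that by $R$-bilinearity of $1_R\otimes\beta$,
\begin{equation*}
(1_R\otimes\beta)(1\otimes f_i,\,1\otimes f_j)=1\cdot\beta(f_i,f_j)=\delta_{ij}\in R,
\end{equation*}
so this basis is orthonormal with respect to $1_R\otimes\beta$. This verifies the definition of orthogonalizability for the family $(R\otimes_{\mathbb{C}}\mathfrak{g},\,1_R\otimes\beta)$.

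The argument is short and there is no real obstacle; the only point worth emphasizing is that one uses $\mathbb{C}\subseteq R$ to perform the rescaling inside the base field $\mathbb{C}$, so the resulting orthonormal basis genuinely comes from $\mathfrak{g}$ rather than requiring square roots in $R$ itself. This is exactly what makes the statement work uniformly for $R=\mathbb{C}$, $R=\mathbb{C}[t]$ and $R=\mathbb{C}[t,t^{-1}]$.
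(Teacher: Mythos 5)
Your proof is correct and follows the same route as the paper: take an orthonormal basis of $(\mathfrak{g},\beta)$ over $\mathbb{C}$ (which exists since $\beta$ is non-degenerate over a field) and observe that $\{1\otimes f_i\}$ is an orthonormal basis of $(R\otimes_{\mathbb{C}}\mathfrak{g},\,1_R\otimes\beta)$. The only difference is that you spell out the Gram--Schmidt step and explicitly flag that the rescaling takes place in $\mathbb{C}\subseteq R$, which the paper leaves implicit.
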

\begin{proof}
The form $\beta$  is nondegenerate as a form over a field hence it is orthogonalizable. 
   Let   $\{e_{1},e_{2},...,e_n\}$ be an orthonormal  basis for   $\fg$  with respect to $\beta$.  Then clearly $\{1\otimes e_{1},1\otimes e_{2},...,1\otimes e_n\}$ is an orthonormal  basis  for $R\otimes_{\C}\fg$  with respect to $1_R\otimes {\beta}$.
\end{proof}  
\subsubsection{Restriction of the form to $\g$ }\label{s422}
Restriction of the form $1_R\otimes {\beta}$ to  $\g=\k\oplus \boldsymbol{\fp}$ is not in general orthogonalizable. Clearly, it is orthogonalizable on $\k$, and the next lemma shows that it is a multiple of an orthogonalizable form  on $\boldsymbol{\fp}=\langle r\rangle \otimes_{\C} \fp$. 
\begin{lemma*}
    There is a unique $R$-bilinear symmetric
     $(\boldsymbol{\fk},K)$-invariant   and orthogonalizable form $ \boldsymbol{\beta}_{\p}^r:\boldsymbol{\fp}\times \boldsymbol{\fp}\longrightarrow  R$ such that 
    \[ \left(1\otimes{\beta_{\fp}}\right)|_{\boldsymbol{\fp}\times \boldsymbol{\fp}}=r^2\boldsymbol{\beta}_{\p}^r,\]
where $\beta_{\mathfrak{p}}\coloneqq \beta\vert_{\mathfrak{p}}$
\end{lemma*}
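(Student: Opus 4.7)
The plan is to exploit that $R$ is a domain and $r \neq 0$, so that multiplication by $r$ gives an $R$-module isomorphism $R \xrightarrow{\sim} \langle r \rangle$, $a \mapsto ra$. Tensoring with $\fp$ over $\C$ yields an $R$-module isomorphism
\[
\Phi \colon R\otimes_{\C}\fp \xrightarrow{\sim} \langle r\rangle \otimes_{\C}\fp = \boldsymbol{\fp}, \qquad a\otimes v \longmapsto ra\otimes v.
\]
On the source, the extension-of-scalars form $1_R\otimes \beta|_{\fp}$ is $R$-bilinear, symmetric, $(\k,K)$-invariant (since $\beta|_{\fp}$ has these properties over $\C$ and $K$ acts trivially on $R$), and orthogonalizable: indeed, $\beta|_{\fp}$ is nondegenerate over $\C$ so it admits an orthonormal basis $\{f_1,\ldots,f_m\}$, and then $\{1\otimes f_1,\ldots,1\otimes f_m\}$ is an orthonormal basis for $R\otimes_{\C}\fp$. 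My $\boldsymbol{\beta_{\fp}^r}$ will be the transport of $1_R\otimes \beta|_{\fp}$ along $\Phi$; concretely,
\[
\boldsymbol{\beta_{\fp}^r}(ra\otimes v,\; rb\otimes w) := ab\,\beta(v,w), \quad a,b\in R,\ v,w\in\fp.
\]
Orthogonalizability of $\boldsymbol{\beta_{\fp}^r}$ is inherited from the source via the basis $\{r\otimes f_1,\ldots,r\otimes f_m\}$, and the $(\k,K)$-invariance transfers because $\Phi$ is $(\k,K)$-equivariant (the $K$-action and the $\k$-action through $[\cdot,\cdot]$ on $\fp$ touch only the $\fp$-tensor factor).

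The required identity then becomes a single direct computation: for $u_1 = ra\otimes v$ and $u_2 = rb\otimes w$ in $\boldsymbol{\fp}$,
\[
(1_R\otimes \beta|_{\fp})(u_1,u_2) = (ra)(rb)\beta(v,w) = r^2\cdot ab\,\beta(v,w) = r^2\, \boldsymbol{\beta_{\fp}^r}(u_1,u_2).
\]
By $R$-bilinearity this extends from pure tensors to all of $\boldsymbol{\fp}\times \boldsymbol{\fp}$.

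For uniqueness, suppose $\boldsymbol{\beta_1}$ and $\boldsymbol{\beta_2}$ are two $R$-bilinear forms on $\boldsymbol{\fp}$ satisfying $r^2 \boldsymbol{\beta_i} = (1_R\otimes \beta|_{\fp})|_{\boldsymbol{\fp}\times\boldsymbol{\fp}}$. Then $r^2(\boldsymbol{\beta_1}-\boldsymbol{\beta_2}) = 0$ as a map to $R$, and since $R$ is an integral domain and $r^2 \neq 0$, the values $(\boldsymbol{\beta_1}-\boldsymbol{\beta_2})(u,v)$ must vanish for all $u,v\in\boldsymbol{\fp}$. Hence $\boldsymbol{\beta_1}=\boldsymbol{\beta_2}$. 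Note this uniqueness argument also shows that whichever construction of $\boldsymbol{\beta_{\fp}^r}$ one prefers (the basis-dependent one or the intrinsic pullback along $\Phi$) produces the same form, so no coherence check is needed. The proof involves no real obstacle: the only mild subtlety is keeping the two $R$-module structures on $\langle r\rangle\otimes_{\C}\fp$ (one inherited from $\langle r\rangle\subseteq R$, the other from the identification with $R\otimes_{\C}\fp$) aligned via $\Phi$, after which every verification reduces to the corresponding property of $\beta|_{\fp}$ over $\C$.
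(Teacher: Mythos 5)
Your proof is correct and rests on the same core observation as the paper's: the restriction of $1_R\otimes\beta|_{\fp}$ to $\boldsymbol{\fp}\times\boldsymbol{\fp}$ takes values in $\langle r^2\rangle$, so dividing by $r^2$ (equivalently, transporting the constant-family form along the rescaling isomorphism $\Phi$) yields the desired form. The paper's proof divides directly and then checks $(\k,K)$-invariance and orthogonalizability as you do, while you package the same computation as a pullback along $\Phi$; this is a cosmetic difference, and in fact your framing anticipates the isometry $T_r$ introduced later in Section~4.3. One small improvement you make: the paper's proof doesn't explicitly justify the uniqueness claim in the lemma, whereas you supply the one-line argument that $r^2(\boldsymbol{\beta}_1-\boldsymbol{\beta}_2)=0$ forces $\boldsymbol{\beta}_1=\boldsymbol{\beta}_2$ in the integral domain $R$.
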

\begin{proof}
Since $\left(1\otimes{\beta_{\fp}}\right)|_{\boldsymbol{\fp}\times \boldsymbol{\fp}}(\boldsymbol{\mathfrak{p}},\boldsymbol{\mathfrak{p}})\subset \langle r^2 \rangle$, there is a form $ \boldsymbol{\beta}_{\p}^r:\boldsymbol{\fp}\times \boldsymbol{\fp}\longrightarrow  R$ such that 
    \[ \left(1\otimes{\beta_{\fp}}\right)|_{\boldsymbol{\fp}\times \boldsymbol{\fp}}=r^2\boldsymbol{\beta}_{\p}^r.\]
 Since $1\otimes{\beta_{\fp}}$ is $(\boldsymbol{\fk},K)$-invariant then so is $\boldsymbol{\beta}_{\p}^r$. 
 We now  prove  orthogonalizability. If $\{e_{m+1},e_{m+2},...,e_n\}$ is an orthonormal  basis for $\fp$  with respect to $\beta_{\fp}$ then   $\{r\otimes e_{m+1},r\otimes e_{m+2},...,r\otimes e_n\}$ is  an orthonormal  basis for $\boldsymbol{\fp}$ with respect to $\boldsymbol{\beta}_{\p}^r$.
\end{proof}
 We shall make use of the form $ \boldsymbol{\beta}_{\p}^r:\boldsymbol{\fp}\times \boldsymbol{\fp}\longrightarrow  R$ throughout the text. 
 \subsection{Complex forms}
 In this subsection we show that the family of quadratic spaces $(\boldsymbol{\mathfrak{p}},\boldsymbol{\beta}_{\p}^r)$ has a natural complex form. This induces a complex form for the corresponding family of orthogonal Lie algebras and for the corresponding family of Clifford algebras.

\subsubsection{ The complex form of $(\boldsymbol{\mathfrak{p}},\boldsymbol{\beta}_{\p}^r)$}\label{compform}

\begin{proposition*} \quad
\begin{enumerate}
    \item The restriction of $ \boldsymbol{\beta}_{\p}^r$ to the complex vector subspace $\fp_r:=\C r\otimes_{\C} \fp \subseteq \boldsymbol{\mathfrak{p}}$ is non-degenerate and $(1\otimes \fk,K)$-invariant. 
    \item There is a canonical $(1\otimes \fk,K)$-equivariant isomorphism of quadratic spaces between $(\fp_r,\boldsymbol{\beta}_{\p}^r|_{\fp_r})$ and $(\fp,\beta_{\fp})$.
    \item The complex quadratic space $(\fp_r,\boldsymbol{\beta}_{\p}^r|_{\fp_r})$ is a complex form of $(\boldsymbol{\fp},\boldsymbol{\beta}_{\p}^r)$, that is,  the natural multiplication 
    \begin{eqnarray}\nonumber
    & (R\otimes_{\C}\fp_r,1\otimes \boldsymbol{\beta}_{\p}^r|_{\fp_r})& \longrightarrow (\boldsymbol{\fp},\boldsymbol{\beta}_{\p}^r)\\  \nonumber
    & f\otimes X&\longmapsto   f{X},
    \end{eqnarray}
     is a $(1\otimes \fk,K)$-equivariant isometry between the quadratic spaces. 
\end{enumerate}
\end{proposition*}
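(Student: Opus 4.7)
The plan is to reduce all three claims to a single explicit computation of $\boldsymbol{\beta_{\fp}^r}$ on rank-one pure tensors of the form $r\otimes X$. Starting from the defining identity $(1\otimes\beta_{\fp})|_{\boldsymbol{\fp}\times\boldsymbol{\fp}}=r^2\boldsymbol{\beta_{\fp}^r}$ from the lemma of \S\ref{s422}, I would evaluate both sides on a pair $(r\otimes X,r\otimes Y)\in\fp_r\times\fp_r$. The left-hand side equals $r^2\beta_{\fp}(X,Y)$ by $R$-bilinearity, so cancelling the nonzero element $r^2$ in the integral domain $R$ yields the key identity
\begin{equation*}
\boldsymbol{\beta_{\fp}^r}(r\otimes X,r\otimes Y)=\beta_{\fp}(X,Y),\qquad X,Y\in\fp.
\end{equation*}

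With this formula in hand, part (1) will follow immediately: the $\C$-linear map $\fp\to\fp_r$, $X\mapsto r\otimes X$, is a bijection, so non-degeneracy of $\beta_{\fp}$ transfers to $\boldsymbol{\beta_{\fp}^r}|_{\fp_r}$. The $(1\otimes\fk,K)$-invariance comes from restricting the $(\boldsymbol{\fk},K)$-invariance of $\boldsymbol{\beta_{\fp}^r}$, once one checks that $\fp_r$ is stable under both actions; that stability is clear since $k\cdot(r\otimes X)=r\otimes\operatorname{Ad}(k)X$ for $k\in K$ and $(1\otimes Z)\cdot(r\otimes X)=r\otimes[Z,X]$ for $Z\in\fk$, both landing back in $\fp_r$ because $\operatorname{Ad}(K)\fp\subseteq\fp$ and $[\fk,\fp]\subseteq\fp$. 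Part (2) is then essentially the formula above read as the statement that the map $\fp_r\to\fp$, $cr\otimes X\mapsto cX$, is a $\C$-linear isometry; $K$- and $\fk$-equivariance follow from the explicit description of the actions just given.

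For part (3), I would fix a $\C$-basis $\{X_1,\dots,X_m\}$ of $\fp$ and observe that $\{r\otimes X_1,\dots,r\otimes X_m\}$ is simultaneously a $\C$-basis of $\fp_r$ and an $R$-basis of $\boldsymbol{\fp}=\langle r\rangle\otimes_{\C}\fp$. Consequently the multiplication map $R\otimes_{\C}\fp_r\to\boldsymbol{\fp}$ sends an $R$-basis to an $R$-basis and is thus an $R$-module isomorphism. The isometry property follows since both $1\otimes\boldsymbol{\beta_{\fp}^r}|_{\fp_r}$ and $\boldsymbol{\beta_{\fp}^r}$ are $R$-bilinear and agree on the basis $\{r\otimes X_i\}$ by the key identity. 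Equivariance of the multiplication map under $K$ and $1\otimes\fk$ is immediate because the action commutes with scalar multiplication by elements of $R$ in the first tensor factor.

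The only point requiring a small amount of care is the injectivity needed to define $\boldsymbol{\beta_{\fp}^r}$ uniquely from $(1\otimes\beta_{\fp})|_{\boldsymbol{\fp}\times\boldsymbol{\fp}}=r^2\boldsymbol{\beta_{\fp}^r}$ via cancellation of $r^2$; this is where we use that $R$ is a principal ideal domain (hence an integral domain), so that $r^2$ is not a zero divisor. Once that observation is made, the proposition is essentially a direct unpacking of definitions, and no further obstacle should arise.
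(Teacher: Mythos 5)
Your proposal is correct and follows essentially the same route as the paper's own (very terse) proof, which states only that Claim 1 follows from Claim 2, that the canonical isomorphism in Claim 2 is $T_r:\fp\to\fp_r$, $X\mapsto r\otimes X$, and that Claim 3 is a direct calculation. You have simply made those calculations explicit, organized around the identity $\boldsymbol{\beta_{\fp}^r}(r\otimes X,r\otimes Y)=\beta_{\fp}(X,Y)$ and the cancellation of $r^2$ in the integral domain $R$.
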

\begin{proof}
Claim 1. follows from Claim 2.  The canonical  isomorphism in Claim 2 is
  $T_r:\fp \longrightarrow \fp_r$  sending $ X$ to $r\otimes X$. The proof of Claim 3 is by direct calculations.  
\end{proof}

\begin{remark*}
    Note that $\langle r\rangle =\langle s\rangle $  if and only if there exists $z\in R^{\times}$ such that $s=zr$. If indeed $\langle r\rangle =\langle s\rangle $,  the last proposition implies that there is a canonical isomorphism of quadratic spaces between $(\fp_r,\boldsymbol{\beta}_{\p}^r|_{\fp_r})$ and  $(\fp_s,\boldsymbol{\beta}_{\p}^r|_{\fp_s})$. 
\end{remark*}
 
\subsubsection{The complex form of $\mathfrak{so}(\boldsymbol{\fp},\boldsymbol{\beta}_{\p}^r)$}
The complex form  $(\fp_r,\boldsymbol{\beta}_{\p}^r|_{\fp_r})$  of the quadratic space $(\boldsymbol{\fp},\boldsymbol{\beta}_{\p}^r)$   gives rise to a complex form of the corresponding orthogonal Lie algebra. We  summarize it in the proposition below. We omit the simple proof.
\begin{proposition*}
    The complex Lie subalgebra  $\mathfrak{so}(\fp_r,\boldsymbol{\beta}_{\p}^r|_{\fp_r})$ is a complex form of the $R$-Lie algebra $\mathfrak{so}(\boldsymbol{\fp},\boldsymbol{\beta}_{\p}^r)$. The isomorphism is explicitly given by 
\begin{eqnarray}\nonumber
    &&  E_r:R\otimes \mathfrak{so}(\fp_r,\boldsymbol{\beta}_{\p}^r|_{\fp_r})\longrightarrow \mathfrak{so}(\boldsymbol{\fp},\boldsymbol{\beta}_{\p}^r)\\ \nonumber
    && \quad \quad \quad    f\otimes  \xi  \longmapsto   f \overline{\xi},
      \end{eqnarray}
      where $\overline{\xi}$ is the $R$-linear extension of $\xi$. 
\end{proposition*}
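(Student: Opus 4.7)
The plan is to bootstrap directly from the previous proposition, which supplies the canonical isometry of $R$-quadratic spaces
\[
m:(R\otimes_{\C}\fp_r,\,1\otimes\boldsymbol{\beta_{\fp}^r}|_{\fp_r})\longrightarrow (\boldsymbol{\fp},\boldsymbol{\beta_{\fp}^r}),\qquad f\otimes X\longmapsto fX.
\]
Via $m$, the $R$-module $\boldsymbol{\fp}$ is freely generated by the complex subspace $\fp_r$, so every $\C$-linear map $\xi:\fp_r\to\fp_r$ has a unique $R$-linear extension $\overline{\xi}:\boldsymbol{\fp}\to\boldsymbol{\fp}$ characterized by $\overline{\xi}(fv)=f\,\xi(v)$ for $f\in R$ and $v\in\fp_r$.

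First I would show $\overline{\xi}\in\mathfrak{so}(\boldsymbol{\fp},\boldsymbol{\beta_{\fp}^r})$ whenever $\xi\in\mathfrak{so}(\fp_r,\boldsymbol{\beta_{\fp}^r}|_{\fp_r})$. Writing two general elements of $\boldsymbol{\fp}$ as $fv$ and $gw$ with $v,w\in\fp_r$ and using $R$-bilinearity of $\boldsymbol{\beta_{\fp}^r}$, one computes
\[
\boldsymbol{\beta_{\fp}^r}(\overline{\xi}(fv),gw)+\boldsymbol{\beta_{\fp}^r}(fv,\overline{\xi}(gw))=fg\bigl(\boldsymbol{\beta_{\fp}^r}(\xi v,w)+\boldsymbol{\beta_{\fp}^r}(v,\xi w)\bigr)=0,
\]
and extends to arbitrary sums by bilinearity. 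Then I would define $E_r(f\otimes\xi):=f\,\overline{\xi}$ and note it is manifestly $R$-linear. For the Lie bracket, the crucial identity is $\overline{[\xi_1,\xi_2]}=[\overline{\xi_1},\overline{\xi_2}]$, which holds because both sides are $R$-linear endomorphisms of $\boldsymbol{\fp}$ agreeing with $[\xi_1,\xi_2]$ on the generating subspace $\fp_r$ (since $\overline{\xi_i}|_{\fp_r}=\xi_i$ and $\fp_r$ is $\xi_i$-stable). Combined with commutativity of $R$, this gives
\[
E_r([f_1\otimes\xi_1,f_2\otimes\xi_2])=f_1f_2\,\overline{[\xi_1,\xi_2]}=f_1f_2\,[\overline{\xi_1},\overline{\xi_2}]=[f_1\overline{\xi_1},f_2\overline{\xi_2}].
\]

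For bijectivity I would pick an orthonormal $\C$-basis $\{e_1,\dots,e_m\}$ of $(\fp,\beta_\fp)$ and pass through the isomorphism $T_r$ from Section 4.3.1: the set $\{T_r(e_i)\}=\{r\otimes e_i\}$ is simultaneously an orthonormal $\C$-basis of $(\fp_r,\boldsymbol{\beta_{\fp}^r}|_{\fp_r})$ and an orthonormal $R$-basis of $(\boldsymbol{\fp},\boldsymbol{\beta_{\fp}^r})$ (this latter fact being exactly the orthogonalizability produced in Section 4.2.2). In these bases, $\mathfrak{so}(\fp_r,\boldsymbol{\beta_{\fp}^r}|_{\fp_r})$ is the complex space of skew-symmetric $m\times m$ matrices and $\mathfrak{so}(\boldsymbol{\fp},\boldsymbol{\beta_{\fp}^r})$ is the same space of skew-symmetric matrices but with entries in $R$; the map $E_r$ becomes the obvious base-change isomorphism $R\otimes_{\C}\mathfrak{so}_m^{\mathrm{sk}}(\C)\xrightarrow{\sim}\mathfrak{so}_m^{\mathrm{sk}}(R)$.

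I do not foresee a serious obstacle: the entire argument is a routine verification once the previous proposition's isometry $m$ is in hand. The only subtlety worth flagging is making sure that the extension $\xi\mapsto\overline{\xi}$ is genuinely well defined on all of $\boldsymbol{\fp}$ and compatible with both the form and the bracket, for which the description of $\boldsymbol{\fp}$ as the $R$-span of $\fp_r$ via $m$ is doing all the work.
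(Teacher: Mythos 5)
Your argument is correct and complete. The paper actually omits the proof of this proposition entirely (``We omit the simple proof''), so there is no official argument to compare against; your write-up fills that gap cleanly.

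The two load-bearing observations are exactly the ones you isolate: (i) the isometry $m$ from the preceding proposition exhibits $\boldsymbol{\fp}$ as the free $R$-module generated by $\fp_r$, so $\xi\mapsto\overline{\xi}$ is well defined, lands in $\mathfrak{so}(\boldsymbol{\fp},\boldsymbol{\beta_{\fp}^r})$ by $R$-bilinearity of the form, and respects brackets because $R$-linear maps agreeing on a generating subspace coincide; and (ii) the orthonormal basis $\{r\otimes e_i\}$ of Section 4.2.2, which is simultaneously a $\C$-basis of $\fp_r$ and an $R$-basis of $\boldsymbol{\fp}$, reduces surjectivity and injectivity to the obvious base-change isomorphism on skew-symmetric matrices. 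Both points are verified correctly, including the identification $\boldsymbol{\beta_{\fp}^r}(r\otimes e_i,r\otimes e_j)=\delta_{ij}$ coming from $\left(1\otimes\beta_\fp\right)|_{\boldsymbol{\fp}\times\boldsymbol{\fp}}=r^2\boldsymbol{\beta_{\fp}^r}$. One small stylistic remark: since you already know $E_r$ is a bijection from the basis argument, the earlier verification that $\overline{\xi}$ is skew could be absorbed into it, but keeping it separate is harmless and arguably clearer.
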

 
The isometry $T_r:\fp \longrightarrow \fp_r$ introduced in the proof of Proposition \ref{compform}  induces an isomorphism of the corresponding orthogonal Lie algebras
\begin{eqnarray}\nonumber
    &\operatorname{Ad}(T_r):&\mathfrak{so}(\fp,\beta_{\fp})\longrightarrow  \mathfrak{so}(\fp_r,\boldsymbol{\beta}_{\p}^r|_{\fp_r})\\ \nonumber
    &&  \xi  \longmapsto    {T_r\circ \xi \circ T_r^{-1}}.
      \end{eqnarray}
As a result, we obtain  a chain of isomorphisms of Lie algebras over $R$:
    \begin{eqnarray}\nonumber
    &&R\otimes \mathfrak{so}(\fp,\beta_{\fp})\xrightarrow[]{ \mathbb{I}_R\otimes \operatorname{Ad}(T_r)} R\otimes \mathfrak{so}(\fp_r,\boldsymbol{\beta}_{\p}^r|_{\fp_r})\xrightarrow[]{ E_r} \mathfrak{so}(\boldsymbol{\fp},\boldsymbol{\beta}_{\p}^r).%\\ %\nonumber
   % && \quad \quad \quad f\otimes \xi \longmapsto   f\otimes t_r\circ \xi \circ t_r^{-1} \longmapsto   f \overline{t_r\circ \xi \circ t_r^{-1}},
      \end{eqnarray}

We note that any $L\in \mathfrak{so}(R\otimes_{\C}\fp, 1\otimes {\beta}_{\fp})$ must preserve $\boldsymbol{\fp}=I\otimes_{\C}\fp$.  Hence such an operator can be restricted to an operator on $\boldsymbol{\fp}$.  
We let \[\operatorname{Res}^{R\otimes_{\C}\fp}_{\boldsymbol{\fp}}:\mathfrak{so}(R\otimes_{\C}\fp, 1\otimes {\beta}_{\fp})\longrightarrow \mathfrak{so}(\boldsymbol{\fp}, (1\otimes {\beta})_{\boldsymbol{\fp}})=\mathfrak{so}(\boldsymbol{\fp},\boldsymbol{\beta}^r_{\boldsymbol{\fp}})\]  be the morphism of Lie algebras obtained by restriction to $\boldsymbol{\fp}$.

\begin{lemma*}\label{lem2.10}
Under the canonical isomorphism $R\otimes_{\C} \mathfrak{so}(\fp,\beta_{\fp})\cong \mathfrak{so}(R\otimes_{\C}\fp, 1\otimes {\beta}_{\fp})$, the  morphism $E_r\circ \left( \mathbb{I}_R\otimes \operatorname{Ad}(T_r)\right):R\otimes \mathfrak{so}(\fp,\beta_{\fp})\longrightarrow   \mathfrak{so}(\boldsymbol{\fp},\boldsymbol{\beta}_{\p}^r)$ coincides with $\operatorname{Res}^{R\otimes_{\C}\fp}_{\boldsymbol{\fp}}$.
\end{lemma*}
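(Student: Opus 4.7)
The strategy is to unwind both sides on generators and check equality on a spanning set, then invoke $R$-linearity.

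First I would fix convenient descriptions of the objects involved. Since $R\otimes_{\C}\fp_r\to\boldsymbol{\fp}$, $f\otimes y\mapsto fy$ is the $(1\otimes\fk,K)$-equivariant isometry supplied by Proposition \ref{compform}, every element of $\boldsymbol{\fp}$ is an $R$-linear combination of vectors of the form $r\otimes X$ with $X\in\fp$, and a typical such element is $hr\otimes X$ for $h\in R$, $X\in\fp$. Under the canonical isomorphism $R\otimes_{\C}\mathfrak{so}(\fp,\beta_{\fp})\cong\mathfrak{so}(R\otimes_{\C}\fp,1\otimes\beta_{\fp})$, a simple tensor $f\otimes\xi$ corresponds to the $R$-linear operator $f\cdot(\mathbb{I}_R\otimes\xi)$ on $R\otimes_{\C}\fp$, which sends $g\otimes X$ to $fg\otimes\xi(X)$.

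Next I would compute both morphisms on $f\otimes\xi\in R\otimes_{\C}\mathfrak{so}(\fp,\beta_{\fp})$, evaluated on a generator $hr\otimes X\in\boldsymbol{\fp}$. For the left-hand side, $\operatorname{Ad}(T_r)(\xi)=T_r\circ\xi\circ T_r^{-1}$ acts on $\fp_r$ by $r\otimes X\mapsto r\otimes\xi(X)$ (using $T_r^{-1}(r\otimes X)=X$ and $T_r(\xi(X))=r\otimes\xi(X)$). Its $R$-linear extension $\overline{\operatorname{Ad}(T_r)(\xi)}$ is then $\mathbb{I}_R\otimes\operatorname{Ad}(T_r)(\xi)$ under the identification $\boldsymbol{\fp}=R\otimes_{\C}\fp_r$, so it sends $hr\otimes X=h\cdot(r\otimes X)$ to $hr\otimes\xi(X)$. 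Applying $E_r$, which multiplies by $f$, yields $fhr\otimes\xi(X)$. For the right-hand side, the operator $f\cdot(\mathbb{I}_R\otimes\xi)$ on $R\otimes_{\C}\fp$ preserves $\boldsymbol{\fp}=I\otimes_{\C}\fp$ (since $I$ is an ideal), and its restriction sends $hr\otimes X$ to $fhr\otimes\xi(X)$ as well.

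Finally, since both sides agree on all generators $hr\otimes X$ of the $R$-module $\boldsymbol{\fp}$, and both compositions produce $R$-linear endomorphisms of $\boldsymbol{\fp}$, the two endomorphisms coincide; extending to arbitrary elements $f\otimes\xi$ by $R$-linearity (and then summing) establishes the equality of morphisms $R\otimes_{\C}\mathfrak{so}(\fp,\beta_{\fp})\to\mathfrak{so}(\boldsymbol{\fp},\boldsymbol{\beta_{\fp}^r})$. The only real subtlety is keeping the various identifications (the complex form isomorphism $R\otimes_{\C}\fp_r\cong\boldsymbol{\fp}$, the constant-family identification $R\otimes_{\C}\mathfrak{so}(\fp,\beta_{\fp})\cong\mathfrak{so}(R\otimes_{\C}\fp,1\otimes\beta_{\fp})$, and the meaning of the overline in the definition of $E_r$) consistent; once these are pinned down the verification is essentially a one-line calculation, so I do not anticipate a genuine obstacle.
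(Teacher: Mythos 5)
Your proof is correct and takes essentially the same approach as the paper's: compute both composite morphisms on simple tensors $f\otimes\xi$ applied to generators of $\boldsymbol{\fp}$, unwind the definitions of $T_r$, $\operatorname{Ad}(T_r)$, $E_r$, and the restriction, and observe agreement. The paper evaluates on $1\otimes\xi$ and $r\otimes X$ and appeals implicitly to $R$-linearity, whereas you carry the extra scalars $f$ and $h$ explicitly, but this is a cosmetic difference.
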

\begin{proof}
For any $\xi\in \mathfrak{so}(\fp,\beta_{\fp})$, 
 \begin{eqnarray}\nonumber
  &&E_r\circ (\mathbb{I}_R\otimes \operatorname{Ad}(T_r))(1\otimes \xi) =  E_r(1\otimes T_r\circ \xi \circ T_r^{-1} ) = \overline{T_r\circ \xi \circ T_r^{-1}}  
\end{eqnarray} 
Applying this operator to $r\otimes X\in \boldsymbol{\fp}$ gives 
\begin{eqnarray}\nonumber
  && \overline{T_r\circ \xi  \circ T_r^{-1}}(r\otimes X)=r\otimes \xi(X)=\left(\operatorname{Res}^{R\otimes_{\C}\fp}_{\boldsymbol{\fp}}(\mathbb{I}\otimes \xi)\right)(r\otimes X). 
\end{eqnarray} 
\end{proof}
\subsubsection{The complex form of $Cl(\boldsymbol{\fp},\boldsymbol{\beta}_{\p}^r)$}
It follows from Proposition \ref{compform} that the complex Clifford algebra $Cl(\fp_r,\boldsymbol{\beta}_{\p}^r|_{\fp_r})$ is a complex form of $Cl(\boldsymbol{\fp},\boldsymbol{\beta}_{\p}^r)$ as an algebra over $R$. Moreover we get a chain of isomorphisms 
\[ R\otimes_{\C}Cl(\fp,\beta) \longrightarrow R\otimes_{\C}Cl(\fp_r,\boldsymbol{\beta}_{\p}^r|_{\fp_r})\longrightarrow Cl(\boldsymbol{\fp},\boldsymbol{\beta}_{\p}^r) \]
that fit into the following  commutative diagram 
\[\xymatrix{
& R\otimes_{\C}Cl(\fp,\beta) \ar[r]
& R\otimes_{\C}Cl(\fp_r,\boldsymbol{\beta}_{\p}^r|_{\fp_r})\ar[r] & Cl(\boldsymbol{\fp},\boldsymbol{\beta}_{\p}^r)  \\
&R\otimes_{\C} \fp \ar[u]^{\mathbb{I}_R\otimes \gamma } \ar[r]^{\mathbb{I}_R\otimes T_r} & R\otimes_{\C}  \fp_r\ar[u]_{\mathbb{I}_R\otimes  \gamma_{\fp_r} } \ar[r]^{m} & \p\ar[u]^{\gamma_{\boldsymbol{\fp},\boldsymbol{\beta}_{\boldsymbol{\fp}}^r}}}\]
where the map $m$ is the natural action of $R$ on $\fp_r$, while   $\gamma_{\fp_r}$ and $\gamma_{\boldsymbol{\fp},\boldsymbol{\beta}_{\boldsymbol{\fp}}^r}$ are the canonical embeddings.

\subsection{Canonical sections}\label{CanSec}
In this subsection we show that the definition of the  canonical element associated with a complex nondegenerate quadratic space that was given in Section \ref{CanEl} makes sense  for families of quadratic spaces as long as the symmetric form  is  unimodular.

Assuming that $\boldsymbol{\beta}$ is a bilinear symmetric form on $\g$ and $\boldsymbol{\mathfrak{l}}$ is a submodule of $\g$ on which  $\boldsymbol{\beta}$  is unimodular,  
 we have the isomorphisms 
\begin{equation} 
	\mathrm{End}(\boldsymbol{\mathfrak{l}})\longrightarrow  \boldsymbol{\mathfrak{l}}^*\otimes_{R}\boldsymbol{\mathfrak{l}}\longrightarrow \boldsymbol{\mathfrak{l}}\otimes_{R}\boldsymbol{\mathfrak{l}}.
\end{equation}
The  canonical element $\omega(\boldsymbol{\mathfrak{l}},\boldsymbol{\beta})$ is defined as the image in 
$\boldsymbol{\mathfrak{l}}\otimes_{R} \boldsymbol{\mathfrak{l}}$ under the abovementioned map of  the identity operator $\mathbb{I}_{\boldsymbol{\mathfrak{l}}}\in \mathrm{End}(\boldsymbol{\mathfrak{l}})$.

We denote the image of $\omega(\boldsymbol{\mathfrak{l}},\boldsymbol{\beta})$ in $\mathcal{U}(\g)$
under  the obvious morphisms 
  \[\boldsymbol{\mathfrak{l}}\otimes_{R}\boldsymbol{\mathfrak{l}}\longrightarrow T(\boldsymbol{\mathfrak{l}})\longrightarrow T(\g)\longrightarrow \mathcal{U}(\g),\]   by $\Omega(\boldsymbol{\mathfrak{l}},\boldsymbol{\beta})$. %When $\boldsymbol{\beta}$ is 
 % $\fg$-invariant,   $\Omega(\mathfrak{g},\boldsymbol{\beta})$ lies in $\mathcal{Z}(\mathfrak{g})$ (the universal enveloping algebra of $\mathfrak{g}$). If in addition   $\mathfrak{g}$ is a simple Lie algebra and $\beta$ its Killing form,  $\Omega(\mathfrak{g},\beta)$ is equal to the Casimir. 

\begin{definition*}\label{casd}
     Under our running  assumptions introduced in Section \ref{subsection41}, in which $(\g,K)$ is a subfamily of the constant family $(R\otimes_{\C}\fg,K)$ such that 
     	\[\g=\k\oplus \boldsymbol{\fp}, \quad \text{with} \quad \k=(R\otimes_{\C}\fk),\hspace{1mm}  \boldsymbol{\fp}=(I_{\fg} \otimes_{\C}\fp), \] 
        for every  element $r\in R$ such that $I_{\fg}=\langle r \rangle$, we define 
$$\Omega(\g,\beta,r):=r^2\otimes \Omega (\fg,\beta)\in R\otimes \mathcal{Z}(\fg)=\mathcal{Z}(R\otimes_{\C}\fg)$$  and call it \textit{the Casimir of $\g$ with respect to  $\beta,r$.}
 \end{definition*}
We note that $r^2\Omega(R\otimes_{\C}\fg,1_R\otimes {\beta})=\Omega(\g,\beta,r)$. 
 
\begin{proposition*}
    $\Omega(\g,\beta,r)=r^2\otimes  \Omega({\fk},\beta_{\fk})+\Omega({\fp}_r,\boldsymbol{\beta}_{\p}^r|_{\fp_r})$ 
    and in particular $\Omega(\g,\beta,r)\in \mathcal{Z}(\g)$.
\end{proposition*}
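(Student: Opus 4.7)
The plan is to reduce the identity to a clean computation with an orthogonal $\beta$-basis of the Cartan decomposition, using the orthogonality of $\fg=\fk\oplus\fp$ with respect to $\beta$ and the explicit relationship between $\beta_{\fp}$ and $\boldsymbol{\beta_{\fp}^r}$ established in Section~\ref{s422}.

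First, I would observe that since $\beta$ is orthogonal with respect to the Cartan decomposition $\fg=\fk\oplus\fp$, the canonical element decomposes as $\omega(\fg,\beta)=\omega(\fk,\beta_{\fk})+\omega(\fp,\beta_{\fp})$ in $\fg\otimes_{\C}\fg$, and hence
\[
\Omega(\fg,\beta)=\Omega(\fk,\beta_{\fk})+\Omega(\fp,\beta_{\fp})\in\mathcal{U}(\fg).
\]
Applying the definition $\Omega(\g,\beta,r)=r^2\otimes\Omega(\fg,\beta)$ inside $R\otimes_{\C}\mathcal{U}(\fg)=\mathcal{U}(R\otimes_{\C}\fg)$ yields
\[
\Omega(\g,\beta,r)=r^2\otimes\Omega(\fk,\beta_{\fk})+r^2\otimes\Omega(\fp,\beta_{\fp}).
\]

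Next, I would identify the second summand with $\Omega(\fp_r,\boldsymbol{\beta_{\fp}^r}|_{\fp_r})$. Pick an orthonormal basis $\{e_{m+1},\dots,e_{n}\}$ of $\fp$ with respect to $\beta_{\fp}$. Then, as noted in the proof of the Lemma in Section~\ref{s422}, the family $\{r\otimes e_{m+1},\dots,r\otimes e_{n}\}$ is an orthonormal basis of $\fp_r=\C r\otimes_{\C}\fp$ with respect to $\boldsymbol{\beta_{\fp}^r}|_{\fp_r}$. Using this basis to compute the canonical element via the isomorphisms of Section~\ref{CanSec}, one gets
\[
\Omega(\fp_r,\boldsymbol{\beta_{\fp}^r}|_{\fp_r})=\sum_{i}(r\otimes e_i)(r\otimes e_i)=\sum_{i}r^2\otimes e_i^2=r^2\otimes\Omega(\fp,\beta_{\fp}),
\]
where the product is taken in $\mathcal{U}(R\otimes_{\C}\fg)=R\otimes_{\C}\mathcal{U}(\fg)$. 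Combined with the preceding displayed equation, this gives the asserted identity.

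For the \emph{in particular} statement, the decomposition just established exhibits $\Omega(\g,\beta,r)$ as a sum of an element of $\mathcal{U}(\k)\subseteq\mathcal{U}(\g)$ and an element of $\mathcal{U}(\boldsymbol{\fp})\subseteq\mathcal{U}(\g)$ (since $\fp_r\subseteq\boldsymbol{\fp}$), so $\Omega(\g,\beta,r)\in\mathcal{U}(\g)$. On the other hand, $\Omega(\fg,\beta)\in\mathcal{Z}(\fg)$ because $\beta$ is $\fg$-invariant, whence $r^2\otimes\Omega(\fg,\beta)$ lies in $R\otimes_{\C}\mathcal{Z}(\fg)=\mathcal{Z}(R\otimes_{\C}\fg)$, and in particular commutes with every element of the subalgebra $\g\subseteq R\otimes_{\C}\fg$. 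Hence $\Omega(\g,\beta,r)$ is central in $\mathcal{U}(\g)$.

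The only step that requires any real care is the identification $r^2\otimes\Omega(\fp,\beta_{\fp})=\Omega(\fp_r,\boldsymbol{\beta_{\fp}^r}|_{\fp_r})$: one must be careful to take the canonical element for $\fp_r$ with respect to the \emph{complex} restriction $\boldsymbol{\beta_{\fp}^r}|_{\fp_r}$ (which is non-degenerate by Proposition~\ref{compform}) and verify that the scaling by $r^2$ matches the defining relation $(1\otimes\beta_{\fp})|_{\boldsymbol{\fp}\times\boldsymbol{\fp}}=r^2\boldsymbol{\beta_{\fp}^r}$. Once the orthonormal basis is in place, this is immediate.
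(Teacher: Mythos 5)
Your proof is correct and takes essentially the same approach as the paper's: both reduce to the computation $\Omega(\g,\beta,r)=r^2\otimes\sum_i e_i^2$ with an orthonormal basis adapted to the Cartan decomposition $\fg=\fk\oplus\fp$, then regroup the $\fp$-terms as $(r\otimes e_i)^2$ to identify the second summand with $\Omega(\fp_r,\boldsymbol{\beta_{\fp}^r}|_{\fp_r})$. You also spell out the ``in particular'' centrality argument, which the paper leaves implicit.
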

\begin{proof}
    Pick  an orthonormal basis  $\{e_1,e_2,...,e_m\}$ of  $\fk$ with respect to $\beta_{\fk}$  and  an orthonormal basis $\{e_{m+1},e_{m+2},...,e_n\}$ of $\fp$ with respect to $\beta_{\fp}$. There are such bases since $(\fk,\beta_{\fk})$ and $(\fp,\beta_{\fp})$ are non-degenerate complex quadratic spaces. Hence $\Omega (\fg,\beta)=\sum_{i=1}^ne_i^2$. The set $\{1\otimes e_1,1\otimes e_2,...,1\otimes e_m\}$ is an orthonormal basis of $\k$ with respect to $(1_R\otimes \beta)|_{\k}$ and $\{r\otimes e_{m+1},r\otimes e_{m+2},...,r\otimes e_n\}$ is an orthonormal basis of $\boldsymbol{\fp}$ with respect to $\boldsymbol{\beta}_{\p}^r$ and also an orthonormal basis of $\fp_r$  with respect to $\boldsymbol{\beta}_{\p}^r|_{\fp_r}$, hence
    \begin{align*}  \Omega(\g,\beta,r)&=r^2\otimes \sum_{i=1}^ne_i^2= r^2\otimes \sum_{i=1}^me_i^2+\sum_{i=m+1}^n(r\otimes e_i)^2\\
    &=r^2\otimes  \Omega({\fk},\beta_{\fk})+\Omega({\fp}_r,\boldsymbol{\beta}_{\p}^r|_{\fp_r}).
    \end{align*}
\end{proof}

%\subsection{The Clifford algebras}

\subsection{The algebra  $\boldsymbol{A}=\mathcal{U}(\g)\otimes_R {Cl}(\boldsymbol{\fp},\boldsymbol{\beta}^r_{\boldsymbol{\fp}})$}\label{The algebra A}
 
We define an $R$-algebra via
\[\boldsymbol{A}\coloneqq{A}(\g,\beta,r)\coloneqq\mathcal{U}(\g)\otimes_R {Cl}(\boldsymbol{\fp},\boldsymbol{\beta}_{\boldsymbol{\fp}}^r).\] 
Since $\g\subset R\otimes_{\C} \fg$, we have an inclusion   $\iota_{\g}:\mathcal{U}(\g) \longrightarrow R\otimes_{\C}\mathcal{U}(\fg) =\mathcal{U}(R\otimes_{\C} \fg)$. The Clifford algebra  
${Cl}(\boldsymbol{\fp},\boldsymbol{\beta}_{\boldsymbol{\fp}}^r)$ is not a subalgebra of the constant Clifford algebra   $R\otimes_{\C} {Cl}(\fp,{ \beta}_{\fp})= {Cl}(R\otimes_{\C}\fp,1\otimes \beta_{\fp})$ but there is an  isomorphism  $T_r:R\otimes_{\C} {Cl}(\fp,{ \beta}_{\fp}) \longrightarrow  {Cl}(\boldsymbol{\fp},\boldsymbol{\beta}_{\boldsymbol{\fp}}^r)  $ that is induced from the unique isometry $T_r:(R\otimes_{\C} \fp,\beta_{\fp}) \longrightarrow (\boldsymbol{\fp}=\langle r\rangle \otimes_{\C}\fp ,\boldsymbol{\beta}_{\boldsymbol{\fp}}^r)$ from the proof of Proposition \ref{compform}, satisfying $T_r(1\otimes X)=r\otimes X$ for every $X\in \fp$. 
We obtain an embedding of algebras 
\[ \iota_{\g}\otimes T_r^{-1} :A(\g,\beta,r) \longrightarrow \left(R\otimes_{\C}\mathcal{U}(\fg)\right) \otimes_R \left(R\otimes_{\C} {Cl}(\fp,{ \beta}_{\fp})  \right).  \]
We define an embedding 
\[ \iota_{\g}T_r^{-1} :A(\g,\beta,r)  \longrightarrow R\otimes_{\C} A(\fg,\beta,1).     \] 
via the commutative diagram below 
\[\xymatrix{
 A(\g,\beta,r)\ar[rrd]^{\iota_{\g}T_r^{-1}}\ar[rr]^{\hspace{-20mm} \iota_{\g}\otimes T_r^{-1}}&& \left(R\otimes_{\C}\mathcal{U}(\fg)\right) \otimes_R \left(R\otimes_{\C} {Cl}(\fp,{ \beta}_{\fp})  \right)\ar[d]\\
 &&R\otimes_{\C} A(\fg,\beta,1)}\]
where the vertical arrow is the canonical isomorphism. 

\begin{comment}
Using the isomorphism 
\[  A(R\otimes_{\C}\fg,\beta,1)=\mathcal{U}(R\otimes_{\C}\fg)\otimes_R  {Cl}(R\otimes_{\C}\fp,{ 1\otimes \beta}_{\fp}) .  \]

Of course, the range of the last morphism is canonically isomorphic to 
\[  A(R\otimes_{\C}\fg,\beta,1)=\mathcal{U}(R\otimes_{\C}\fg)\otimes_R  {Cl}(R\otimes_{\C}\fp,{ 1\otimes \beta}_{\fp}) .  \]
Letting $\mu$ stand for the natural isomorphism 
\begin{eqnarray}\nonumber
&&    \mu:\left(R\otimes_{\C}\mathcal{U}(\fg)\right) \otimes_R \left(R\otimes_{\C} {Cl}(\fp,{ \beta}_{\fp})  \right)\longrightarrow  R\otimes_{\C} A(\fg,\beta,1)\\ \nonumber
&& \quad \quad \quad (r_1\otimes X) \otimes (r_2\otimes Y)\longmapsto r_1r_2 \otimes X \otimes Y, 
\end{eqnarray}
and setting $\iota_{\g}T_r^{-1}=\mu \circ (\iota_{\g}\otimes T_r^{-1})$
we obtain an embedding 
\[ \iota_{\g}T_r^{-1} :A(\g,\beta,r)  \longrightarrow R\otimes_{\C} A(\fg,\beta,1).     \]
We abuse notation and also write 
\[ \iota_{\g}T_r^{-1} :A(\g,\beta,r)  \longrightarrow  \iota_{\g}T_r^{-1} (A(\g,\beta,r))\subseteq R\otimes_{\C} A(\fg,\beta,1),    \]
for the obtained isomorphism.
\end{comment}
\subsection{The pair $(\boldsymbol{A},{K})$ and its diagonal embedding}
In this subsection we explain how the pair  $(\boldsymbol{A},{K})$ naturally forms  an algebraic family of generalized pairs over $\operatorname{Spec}_m(R)$ similar to the classical setup as  in Section (\ref{genpair}).

The group $K$ acts on $\mathcal{U}(\g)$ and on ${Cl}(\boldsymbol{\fp},\boldsymbol{\beta}_{\boldsymbol{\fp}}^r)$ by automorphisms of $R$-modules. Hence $K$ acts on  $A$ by automorphisms of $R$-modules. With this action on $A$, the morphism 
\[ \iota_{\g}\otimes T_r^{-1} :A(\g,\beta,r) \longrightarrow \left(R\otimes_{\C}\mathcal{U}(\fg)\right) \otimes_R \left(R\otimes_{\C} {Cl}(\fp,{ \beta}_{\fp})  \right),  \]
from Section \ref{The algebra A},  is $K$-equivariant. 
%Using the cover map from the spin double cover $\widetilde{K}$ of $K$ to $K$ we obtain an action of  $\widetilde{K}$ on  $A$ by automorphisms of $R$-modules. 
To make 
the pair  $(\boldsymbol{A},{K})$ into an algebraic family of generalized pairs over $\operatorname{Spec}_m(R)$ we need to give a ${K}$-equivariant  embedding of  the complex Lie algebra ${\fk}=\operatorname{Lie}(K)$ into $\boldsymbol{A}$. We shall do that by giving a ${K}$-equivariant  embedding of  Lie algebras over $R$,  $\Delta:\boldsymbol{\fk}\longrightarrow \boldsymbol{A}$ that we call the diagonal embedding. 
\subsubsection{Embedding of $\mathfrak{so}(\boldsymbol{\fp},\boldsymbol{\beta}^r_{\boldsymbol{\fp}})$ in $Cl(\boldsymbol{\fp},\boldsymbol{\beta}_{\boldsymbol{\fp}}^r)$}\label{451}
Similar to the case of orthogonalizable quadratic spaces over a field, one can show that if $\{e_1,e_2,....,e_n\}$ is an orthonormal basis for $\boldsymbol{\fp}$ with respect to $ \boldsymbol{\beta}^r_{\boldsymbol{\fp}}$
 then 
 $$\{     R^r_{i,j}=R_{\boldsymbol{\beta}^r_{\boldsymbol{\fp}};e_i,e_j}:=\boldsymbol{\beta}^r_{\boldsymbol{\fp}}(\_, e_j)e_i- \boldsymbol{\beta}^r_{\boldsymbol{\fp}}(\_,e_i)e_j|1\leq i<j\leq n \}$$ 
 form a basis for $\mathfrak{so}(\boldsymbol{\fp},\boldsymbol{\beta}_{\boldsymbol{\fp}}^r)$. %In addition, for any $T\in \mathfrak{so}(\boldsymbol{\fp},\boldsymbol{\beta}_{\boldsymbol{\fp}}^r)$,
Hence there is a unique  $R$-linear map  \[\varphi^r_{\boldsymbol{\fp}}:\mathfrak{so}(\boldsymbol{\fp},\boldsymbol{\beta}^r_{\boldsymbol{\fp}})\longrightarrow Cl(\boldsymbol{\fp},\boldsymbol{\beta}_{\boldsymbol{\fp}}^r)\]
satisfying $\varphi^r_{\boldsymbol{\fp}}(R^r_{i,j})= \frac{1}{2}[\gamma_{\boldsymbol{\fp},\boldsymbol{\beta}_{\boldsymbol{\fp}}^r}(e_i),\gamma_{\boldsymbol{\fp},\boldsymbol{\beta}_{\boldsymbol{\fp}}^r}(e_j)]$, 
were $\gamma_{\boldsymbol{\fp},\boldsymbol{\beta}_{\boldsymbol{\fp}}^r}:\boldsymbol{\fp}\longrightarrow Cl(\boldsymbol{\fp},\boldsymbol{\beta}_{\boldsymbol{\fp}}^r)$ is the canonical embedding. The map $\varphi^r_{\boldsymbol{\fp}}$  can be shown to be a Lie algebra embedding. 
We define a morphism of Lie algebras over $R$,  \[\alpha_{\boldsymbol{\fp},\boldsymbol{\beta}_{\boldsymbol{\fp}}^r}:\k \longrightarrow  Cl(\boldsymbol{\fp},\boldsymbol{\beta}_{\boldsymbol{\fp}}^r)\] via
$\alpha_{\boldsymbol{\fp},\boldsymbol{\beta}_{\boldsymbol{\fp}}^r}:=\varphi^r_{\boldsymbol{\fp}} \circ \operatorname{ad}^{\boldsymbol{\fk}}$, where $\operatorname{ad}^{\boldsymbol{\fk}}$ is the canonical Lie algebra morphism $\operatorname{ad}^{\boldsymbol{\fk}}:\boldsymbol{\fk}\longrightarrow \mathfrak{so}(\boldsymbol{\fp},\boldsymbol{\beta}^r_{\boldsymbol{\fp}})$ given by the adjoint action. 

\begin{proposition*}
As morphisms of Lie algebras from 
$\boldsymbol{\fk}$ into $  Cl(\boldsymbol{\fp},\boldsymbol{\beta}_{\boldsymbol{\fp}}^r)$,
\[\alpha_{\boldsymbol{\fp},\boldsymbol{\beta}_{\boldsymbol{\fp}}^r} =T_r\circ \left(\mathbb{I}_R\otimes  \alpha_{{\fp},{\beta}_{{\fp}}^1} \right).\] 
\end{proposition*}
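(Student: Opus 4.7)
The plan is to recognize the proposition as a naturality statement for the Lie algebra map $\varphi$ under the isometry $T_r$, extended $R$-linearly. Since both sides of the displayed identity are $R$-linear morphisms from $\boldsymbol{\fk}=R\otimes_{\C}\fk$ to $Cl(\boldsymbol{\fp},\boldsymbol{\beta}_{\boldsymbol{\fp}}^r)$, it suffices to verify the equality on elements of the form $1\otimes X$ with $X\in \fk$, and then compute both sides in a compatible pair of orthonormal bases.

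Concretely, I would fix an orthonormal basis $\{f_1,\ldots,f_n\}$ of $(\fp,\beta_{\fp})$ and set $e_i:=r\otimes f_i=T_r(1\otimes f_i)\in\boldsymbol{\fp}$. As noted in Section \ref{s422}, $\{e_1,\ldots,e_n\}$ is an orthonormal basis of $(\boldsymbol{\fp},\boldsymbol{\beta}_{\boldsymbol{\fp}}^r)$. Expand the classical adjoint action as $\operatorname{ad}(X)|_{\fp}=\sum_{i<j}c_{ij}(X)\,R_{\beta_{\fp};f_i,f_j}$ in $\mathfrak{so}(\fp,\beta_{\fp})$. By the definition of $\alpha_{\fp,\beta_{\fp}^1}$ and by the compatibility $T_r\circ(\mathbb{I}_R\otimes\gamma_{\fp,\beta_{\fp}})=\gamma_{\boldsymbol{\fp},\boldsymbol{\beta}_{\boldsymbol{\fp}}^r}\circ(\mathbb{I}_R\otimes T_r)$ read off from the commutative diagram at the end of Section 4.3.3,
\[
T_r\bigl((\mathbb{I}_R\otimes\alpha_{\fp,\beta_{\fp}^1})(1\otimes X)\bigr)=\sum_{i<j}c_{ij}(X)\cdot\tfrac{1}{2}\bigl[\gamma_{\boldsymbol{\fp},\boldsymbol{\beta}_{\boldsymbol{\fp}}^r}(e_i),\gamma_{\boldsymbol{\fp},\boldsymbol{\beta}_{\boldsymbol{\fp}}^r}(e_j)\bigr].
\]

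For the left-hand side I would use the preceding lemma to rewrite $\operatorname{ad}^{\boldsymbol{\fk}}(1\otimes X)=\operatorname{Res}^{R\otimes_{\C}\fp}_{\boldsymbol{\fp}}(1\otimes\operatorname{ad}(X))$. Evaluating on the basis yields $\operatorname{ad}^{\boldsymbol{\fk}}(1\otimes X)(e_k)=r\otimes[X,f_k]=\sum_{i<j}c_{ij}(X)(\delta_{jk}e_i-\delta_{ik}e_j)$, and since $R_{i,j}^r(e_k)=\delta_{jk}e_i-\delta_{ik}e_j$ by orthonormality of $\{e_i\}$, this shows $\operatorname{ad}^{\boldsymbol{\fk}}(1\otimes X)=\sum_{i<j}c_{ij}(X)\,R_{i,j}^r$ in $\mathfrak{so}(\boldsymbol{\fp},\boldsymbol{\beta}_{\boldsymbol{\fp}}^r)$. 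Applying $\varphi_{\boldsymbol{\fp}}^r$ via its defining formula on the basis $\{R_{i,j}^r\}$ (Section \ref{451}) then produces the same expression as on the right, proving the equality on $1\otimes X$ and hence, by $R$-linearity, on all of $\boldsymbol{\fk}$.

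There is no serious obstacle: the proposition is essentially the naturality of the Lie-algebra map $\mathfrak{so}\hookrightarrow Cl$ under the isometry $T_r$, combined with $R$-linear extension. The main care required is to keep track of the various incarnations of $T_r$ (as an isometry of quadratic spaces, as its $R$-linear extension on orthogonal Lie algebras, and as the induced Clifford algebra isomorphism) and to exploit the compatible orthonormal bases $\{f_i\}$ and $\{e_i=r\otimes f_i\}$ so that both computations terminate at the common expression $\sum_{i<j}c_{ij}(X)\cdot\tfrac{1}{2}[\gamma(e_i),\gamma(e_j)]$.
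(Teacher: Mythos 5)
Your proof is correct and is exactly the ``direct calculation'' that the paper alludes to and then omits: you verify the identity on generators $1\otimes X$, expand $\operatorname{ad}(X)$ in the basis $\{R_{\beta_{\fp};f_i,f_j}\}$, track it through the compatible orthonormal bases $\{f_i\}$ and $\{e_i=r\otimes f_i\}$ on both sides, and use the compatibility of the Clifford isomorphism $T_r$ with the canonical embeddings to match the resulting expressions $\sum_{i<j}c_{ij}(X)\cdot\tfrac{1}{2}[\gamma(e_i),\gamma(e_j)]$. Since both sides are $R$-linear, checking on $1\otimes X$ suffices, and your computation carries through.
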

We omit the proof which is by direct calculation.

\subsubsection{The diagonal embedding}
\begin{definition*}
  We define a map  
  \begin{eqnarray}\nonumber
  && \Delta_{(\g,\beta,r)}:\boldsymbol{\fk}\longrightarrow A(\g,\beta,r) \\ \nonumber
  && \Delta_{(\g,\beta,r)}(X):=X\otimes 1_{Cl(\boldsymbol{\fp},\boldsymbol{\beta}_{\boldsymbol{\fp}}^r)}+1_{\mathcal{U}(\g)}\otimes \alpha_{\boldsymbol{\fp},\boldsymbol{\beta}_{\boldsymbol{\fp}}^r}(X).
  \end{eqnarray}
\end{definition*}
We note that for every $X\in \fk$,
\begin{eqnarray}\nonumber
  && \Delta_{(\g,\beta,r)}(1_R\otimes X):=(1_R\otimes X)\otimes 1_{Cl(\boldsymbol{\fp},\boldsymbol{\beta}_{\boldsymbol{\fp}}^r)} +1_{\mathcal{U}(\g)}\otimes   \left(r\otimes  \alpha_{{\fp},\boldsymbol{\beta}_{{\fp}}^1}(X)  \right) 
  \end{eqnarray}  
  in 
$\left(R\otimes_{\C}\mathcal{U}(\fg)\right) \otimes_R \left(R\otimes_{\C} {Cl}(\fp,{ \beta}_{\fp})  \right)$.
  This is mapped under $\iota_{\g}\otimes T_r^{-1}$ to 
\begin{eqnarray}\nonumber
 &&(1_R\otimes X)\otimes (1_R\otimes  1_{Cl(\fp,{\beta}_{\fp})}) +(1_R\otimes 1_{\mathcal{U}(\fg)})\otimes   \left(1_R\otimes \alpha_{{\fp},\boldsymbol{\beta}_{{\fp}}^1}(X) \right)=\\ \nonumber
 &&    \Delta_{(R\otimes \fg,1\otimes \beta,1)} (1_R\otimes X)=1_R\otimes \Delta_{\beta}(X),
\end{eqnarray}
and hence
for every $X\in \fk$,
%\iota_{\g}T_r^{-1}
\begin{eqnarray}\nonumber
  && \iota_{\g}T_r^{-1}\circ \Delta_{(\g,\beta,r)}(1_R\otimes X)=1_R\otimes \Delta_{\beta}(X).
  \end{eqnarray} 

\begin{comment}
\begin{lemma}
For every $X\in \fk$,
%\iota_{\g}T_r^{-1}
\begin{eqnarray}\nonumber
  && \iota_{\g}T_r^{-1}\circ \Delta_{(\g,\beta,r)}(1\otimes X)=1\otimes \Delta_{(\fg,\beta,1)}(X).
  \end{eqnarray} 
%\begin{eqnarray}\nonumber
 % && \iota_{\g}\otimes T_r^{-1}\circ \Delta_{(\g,\beta,r)}(1\otimes X)=1\otimes \Delta_{(\fg,\beta,1)}(X).
  %\end{eqnarray} 
\end{lemma}

\begin{proof}
For every $X\in \fk$,
   \begin{eqnarray}\nonumber
  && \Delta_{(\g,\beta,r)}(1_R\otimes X):=(1_R\otimes X)\otimes 1_{Cl(\boldsymbol{\fp},\boldsymbol{\beta}_{\boldsymbol{\fp}}^r)} +1_{\mathcal{U}(\g)}\otimes   \left(r\otimes ( \varphi^1_{\fp}\circ \operatorname{ad}_X^{\fk})  \right).
  \end{eqnarray}  
  in 
  $\left(R\otimes_{\C}\mathcal{U}(\fg)\right) \otimes_R \left(R\otimes_{\C} {Cl}(\fp,{ \beta}_{\fp})  \right)$.
  This is mapped under $\iota_{\g}\otimes T_r^{-1}$ to 
\begin{eqnarray}\nonumber
 &&(1_R\otimes X)\otimes (1_R\otimes  1_{Cl(\fp,{\beta}_{\fp})}) +(1_R\otimes 1_{\mathcal{U}(\fg)})\otimes   \left(1_R\otimes ( \varphi^1_{\fp}\circ \operatorname{ad}_X^{\fk})  \right),    
\end{eqnarray}
hence
 \begin{eqnarray}\nonumber
  && \iota_{\g}T_r^{-1}\circ \Delta_{(\g,\beta,r)}(1_R\otimes X)=\\ \nonumber
  && 1_R \otimes (X\otimes 1_{Cl(\fp,{\beta}_{\fp})} + 1_{\mathcal{U}(\fg)}\otimes \varphi^1_{\fp}\circ \operatorname{ad}_X^{\fk})=1\otimes \Delta_{(\fg,\beta,1)}(X). 
\end{eqnarray}
%which in $R\otimes_{\C}\left( \mathcal{U}(\fg)\otimes_{\C}{Cl}(\fp,{ \beta}_{\fp})  \right)$ is equal to 
%  \begin{eqnarray}\nonumber
%  && 1_R \otimes (X\otimes 1_{Cl(\fp,{\beta}_{\fp})} + 1_{\mathcal{U}(\fg)}\otimes \varphi^1_{\fp}\circ \operatorname{ad}_X^{\fk})=1\otimes \Delta_{(\fg,\beta,1)}(X). 
 % \end{eqnarray}  
\end{proof} 
%\begin{eqnarray}\nonumber
%&&\varphi(\operatorname{ad}^{\boldsymbol{\fk}}_X)=\sum_{1\leq i<j\leq n}\boldsymbol{\beta}^r_{\p}(\operatorname{ad}^{\boldsymbol{\fk}}_X(e_i),e_j)  \frac{1}{2}[\gamma(e_j),\gamma(e_i)].
%    \end{eqnarray}
\end{comment}
\begin{lemma*}
   The map $\Delta_{(\g,\beta,r)}$ is a $K$-equivariant embedding of Lie algebras.
\end{lemma*}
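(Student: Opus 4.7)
The plan is to deduce all three assertions --- that $\Delta_{(\g,\beta,r)}$ is a morphism of Lie algebras, that it is injective, and that it is $K$-equivariant --- from the corresponding classical properties of $\Delta_\beta$ recalled in Section \ref{genpair}, by leveraging the commutative identity $\iota_{\g}T_r^{-1}\circ \Delta_{(\g,\beta,r)}(1_R\otimes X)=1_R\otimes \Delta_{\beta}(X)$ for $X\in\fk$ that was established just above the lemma. The map $\iota_{\g}T_r^{-1}$ is an embedding of $R$-algebras, so it preserves brackets and reflects equalities; moreover it is $K$-equivariant since both $\iota_{\g}$ and $T_r^{-1}$ are. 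Together with the fact that $\Delta_{(\g,\beta,r)}$ is visibly $R$-linear, this will allow us to transfer each property from the classical setting.

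For the Lie bracket compatibility, observe that the Lie bracket on $A(\g,\beta,r)$ is $R$-bilinear and $\Delta_{(\g,\beta,r)}$ is $R$-linear, so both sides of the desired identity $\Delta_{(\g,\beta,r)}([X,Y])=[\Delta_{(\g,\beta,r)}(X),\Delta_{(\g,\beta,r)}(Y)]$ are $R$-bilinear in $X,Y\in\boldsymbol{\fk}$. It therefore suffices to check the identity on pairs $(1_R\otimes X,1_R\otimes Y)$ with $X,Y\in\fk$. Applying the algebra embedding $\iota_{\g}T_r^{-1}$ to both sides and using the displayed identity, the claim reduces to the known identity $\Delta_\beta([X,Y])=[\Delta_\beta(X),\Delta_\beta(Y)]$ from Section \ref{genpair}.

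Injectivity and $K$-equivariance follow by analogous reductions. For injectivity, pick a $\C$-basis $\{Y_j\}$ of $\fk$; classical injectivity of $\Delta_\beta$ makes $\{\Delta_\beta(Y_j)\}$ linearly independent over $\C$, so the elements $\{1_R\otimes\Delta_\beta(Y_j)\}$ are $R$-linearly independent in $R\otimes_\C A(\fg,\beta,1)$. If $X=\sum_j f_j\otimes Y_j\in\boldsymbol{\fk}$ satisfies $\Delta_{(\g,\beta,r)}(X)=0$, applying $\iota_{\g}T_r^{-1}$ and the $R$-linear extension of the displayed identity gives $\sum_j f_j\cdot(1_R\otimes\Delta_\beta(Y_j))=0$, forcing all $f_j=0$. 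For $K$-equivariance, classical equivariance of $\Delta_\beta$ and $K$-equivariance of $\iota_{\g}T_r^{-1}$ together give the equivariance of $\Delta_{(\g,\beta,r)}$ on elements of the form $1_R\otimes X$, and this extends to all of $\boldsymbol{\fk}$ because the $K$-action on both $\boldsymbol{\fk}$ and $A(\g,\beta,r)$ is $R$-linear while $\Delta_{(\g,\beta,r)}$ is $R$-linear. The only technical point to watch is the careful use of $R$-bilinearity to reduce each assertion from $\boldsymbol{\fk}=R\otimes_\C\fk$ down to the generators $1_R\otimes X$; no substantive obstacle remains, as the entire lemma is an exercise in transporting structure through the commutative diagram.
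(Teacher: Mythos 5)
Your proof is correct and is precisely the argument the paper compresses into a single line: transport the three properties (Lie-bracket compatibility, injectivity, $K$-equivariance) from $\Delta_\beta$ through the $K$-equivariant $R$-algebra embedding $\iota_{\g}T_r^{-1}$, using the identity $\iota_{\g}T_r^{-1}\circ \Delta_{(\g,\beta,r)}(1_R\otimes X)=1_R\otimes \Delta_{\beta}(X)$ established just above the lemma, then extend by $R$-linearity. The paper's ``immediate since $\Delta_\beta$ is a $K$-equivariant embedding of Lie algebras'' is exactly this; you have merely spelled out the $R$-bilinear reduction to generators $1_R\otimes X$ that the paper leaves implicit.
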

\begin{proof}
This is immediate   since $\Delta_{\beta}$ is a $K$-equivariant embedding of Lie algebras.
\end{proof}

\begin{proposition*}
    The pair $(A(\g,\beta,r),{K})$ together with $\Delta_{(\g,\beta,r)}:\boldsymbol{\fk}\longrightarrow A(\g,\beta,r)$ form an algebraic family of generalized pairs over $\operatorname{Spec}_m(R)$.
\end{proposition*}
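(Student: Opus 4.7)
The plan is to verify the four conditions that define an algebraic family of generalized pairs over $\operatorname{Spec}_m(R)$, namely: that $A(\g,\beta,r)$ is an algebraic family of complex algebras, that $K$ acts on it by algebra automorphisms, that $\Delta_{(\g,\beta,r)}$ is a $K$-equivariant Lie algebra embedding, and that the differential of the $K$-action matches the adjoint action of $\Delta_{(\g,\beta,r)}(\fk)$ inside $A(\g,\beta,r)$.

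First I would note that the first three conditions are essentially already established. Indeed, $\mathcal{U}(\g)$ is a free $R$-module (by PBW) and so is $Cl(\boldsymbol{\fp},\boldsymbol{\beta}_{\boldsymbol{\fp}}^r)$ (since $\boldsymbol{\beta}_{\boldsymbol{\fp}}^r$ is orthogonalizable, by Section \ref{s422}); the tensor product $A(\g,\beta,r)$ is then a free $R$-module and an $R$-algebra. The $K$-action on each factor is by $R$-algebra automorphisms: on $\mathcal{U}(\g)$ it extends the adjoint action on $\g$, while on $Cl(\boldsymbol{\fp},\boldsymbol{\beta}_{\boldsymbol{\fp}}^r)$ it is induced from the adjoint action on $\boldsymbol{\fp}$, which preserves $\boldsymbol{\beta}_{\boldsymbol{\fp}}^r$ by the $K$-invariance statement of the lemma in Section \ref{s422}. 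Finally, the preceding lemma shows that $\Delta_{(\g,\beta,r)}$ is a $K$-equivariant Lie algebra embedding.

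The remaining substantive step is the compatibility condition
\[
\operatorname{ad}_{\Delta_{(\g,\beta,r)}(1\otimes Y)}(a)=d\pi_{K,A(\g,\beta,r)}(Y)(a),\qquad Y\in\fk,\ a\in A(\g,\beta,r).
\]
My plan is to reduce this to the classical case via the embedding $\iota_{\g}T_r^{-1}:A(\g,\beta,r)\hookrightarrow R\otimes_{\C}A(\fg,\beta,1)$ constructed in Section \ref{The algebra A}. This morphism is an embedding of $R$-algebras by construction, and is $K$-equivariant because both $\iota_{\g}$ and $T_r^{-1}$ intertwine the respective $K$-actions (the latter because $T_r$ is an isometry of $K$-modules). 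The identity
\[
\iota_{\g}T_r^{-1}\bigl(\Delta_{(\g,\beta,r)}(1\otimes Y)\bigr)=1\otimes\Delta_{\beta}(Y),
\]
verified right after the definition of $\Delta_{(\g,\beta,r)}$, will do the bulk of the work. Applying $\iota_{\g}T_r^{-1}$ to both sides of the desired compatibility and using $K$-equivariance of $\iota_{\g}T_r^{-1}$ turns the equation into
\[
\operatorname{ad}_{1\otimes\Delta_{\beta}(Y)}\bigl(\iota_{\g}T_r^{-1}(a)\bigr)=\bigl(1\otimes d\pi_{K,A(\fg,\beta,1)}(Y)\bigr)\bigl(\iota_{\g}T_r^{-1}(a)\bigr),
\]
which is exactly the $R$-linear extension of the corresponding identity for the classical generalized pair $(A(\fg,\beta,1),K)$ recalled in Section \ref{genpair}. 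Since $\iota_{\g}T_r^{-1}$ is injective, this yields the required equality in $A(\g,\beta,r)$.

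The main subtlety, and the only place where a small argument is needed, is to make sure that the reduction to the classical case via $\iota_{\g}T_r^{-1}$ is legitimate: one must check that the $K$-action on the ambient constant family $R\otimes_{\C}A(\fg,\beta,1)$ is exactly the $R$-linear extension of the classical $K$-action, so that its differential is $1\otimes d\pi_{K,A(\fg,\beta,1)}$. This is transparent from the definition of the constant family, but it is worth spelling out because it is precisely the step that transports the known classical identity to our setting.
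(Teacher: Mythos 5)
Your proof is correct. The paper itself dispatches the compatibility condition with the one-liner ``this can be calculated directly,'' which most naturally suggests a coordinate computation (pick an orthonormal basis of $\p$, expand $\Delta_{(\g,\beta,r)}(1\otimes Y)$, and verify $[\Delta_{(\g,\beta,r)}(1\otimes Y),\_]=d\pi_{K,\boldsymbol{A}}(Y)$ on generators). You instead transport the problem along the $K$-equivariant algebra embedding $\iota_{\g}T_r^{-1}:A(\g,\beta,r)\hookrightarrow R\otimes_{\C}A(\fg,\beta,1)$, use the already-established identity $\iota_{\g}T_r^{-1}\circ\Delta_{(\g,\beta,r)}(1\otimes Y)=1\otimes\Delta_{\beta}(Y)$, and then invoke the classical identity for $(A(\fg,\beta),K)$ extended $R$-linearly. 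Since $\iota_{\g}T_r^{-1}$ is injective, the equality pulls back. This is a genuinely cleaner route: it avoids any basis-dependent computation and reuses the machinery the paper has already set up, at the cost of having to check (as you correctly flag) that the $K$-action on the constant family $R\otimes_{\C}A(\fg,\beta,1)$ is the $R$-linear extension of the classical action, so that its differential is $\mathbb{I}_R\otimes d\pi_{K,A(\fg,\beta)}$. That check is indeed immediate from the construction of the constant family. The approach is also consistent in spirit with how the paper handles the square of the Dirac operator and the later localization arguments, where identities in the family are repeatedly deduced from the classical case via the same embedding.
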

\begin{proof}
    We only need to show that the two actions of $R\otimes_{\C}\fk$ coincide.  This can be calculated directly.
 \end{proof}

\subsubsection{The pair $(\boldsymbol{A},\widetilde{K})$}
Using the canonical map $\widetilde{K}\longrightarrow K$,  $(\boldsymbol{A},\widetilde{K})$ naturally becomes an algebraic family of generalized pairs over $\operatorname{Spec}_m(R)$ with the same diagonal embedding. 

\subsection{Dirac operator}\label{DO}
In this subsection we define the Dirac operator for a family of Harish-Chandra modules in our class. We prove that its square takes a simple form.

As explained in Section \ref{CanSec}, since   $\boldsymbol{\beta}_{\p}^r:\p \times \p \longrightarrow R$ is unimodular we have a well defined 
 canonical element  $\omega(\boldsymbol{\mathfrak{p}},\boldsymbol{\beta}^r)\in \p\otimes_{R} \p$.
Using the embeddings $\gamma_{\p,\boldsymbol{\beta}_{\p}^r}:\p\longrightarrow Cl(\boldsymbol{\fp},\boldsymbol{\beta}_{\boldsymbol{\fp}}^r)$ and   $\p\hookrightarrow \mathcal{U}(\g)$ we obtain an $R$-linear morphism $$\boldsymbol{\mathfrak{p}}\otimes_{R} \boldsymbol{\mathfrak{p}} \longrightarrow A(\g,\beta,r).$$

\begin{definition*} The \textit{algebraic Dirac operator $D(\g,\beta,r)$ of  $(\g,\beta,r)$} is the image 
of $\omega(\boldsymbol{\mathfrak{p}},\boldsymbol{\beta}^r)$ in $A(\g,\beta,r)$ or equivalently, the image of the identity map $\mathbb{I}_{\p}$ of $\mathrm{End}_R(\p)$   under the morphism
$\mathrm{End}_R(\p)\longrightarrow A(\g,\beta,r)$.
\end{definition*}
If $\{e_i\}$ and $\{e_i'\}$ are dual bases of $\p$ with respect to $\boldsymbol{\beta}_{\p}^r$,  then
\begin{equation*}
		D(\g,\beta,r)=\sum_{i}e_i'\otimes\gamma_{\p,\boldsymbol{\beta}_{\p}^r}(e_i).
	\end{equation*}

Note that the map $\mathrm{End}_R(\p)\longrightarrow A(\g,\beta,r)$ is $K$-equivariant and $\mathbb{I}_{\p}$ is $K$-invariant and hence so is $D(\g,\beta,r)$. 
\begin{lemma*}  
\[(\iota_{\g}T_r^{-1})D(\g,\beta,r)=r\otimes D_{\fg,\beta} \]
\end{lemma*}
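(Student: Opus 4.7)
The plan is a direct computation, using an explicit orthonormal basis to write both sides down and then tracing through the definitions of $\iota_{\g}$, $T_r$, and the canonical isomorphism used to define $\iota_{\g}T_r^{-1}$.

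First, I would choose an orthonormal basis $\{e_{m+1},\dots,e_n\}$ of $(\fp,\beta_{\fp})$. By the lemma of Section~\ref{s422}, the scaled collection $\{r\otimes e_{m+1},\dots,r\otimes e_n\}$ is then an orthonormal basis of $(\p,\boldsymbol{\beta}_{\p}^r)$, so it is self-dual for $\boldsymbol{\beta}_{\p}^r$. Substituting it into the formula for the Dirac operator recorded right after Definition~\ref{complexDiracoperator} yields
\[
D(\g,\beta,r)=\sum_{i=m+1}^{n}(r\otimes e_i)\otimes \gamma_{\p,\boldsymbol{\beta}_{\p}^r}(r\otimes e_i).
\]

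Next, I would apply $\iota_{\g}\otimes T_r^{-1}$ term by term. On the first tensor factor, $\iota_{\g}(r\otimes e_i)=r\otimes e_i$ in $R\otimes_{\C}\mathcal{U}(\fg)$. On the second, the defining property of $T_r$, namely $T_r(1\otimes\gamma_{\fp,\beta_{\fp}}(X))=\gamma_{\p,\boldsymbol{\beta}_{\p}^r}(r\otimes X)$ for $X\in\fp$, gives $T_r^{-1}(\gamma_{\p,\boldsymbol{\beta}_{\p}^r}(r\otimes e_i))=1\otimes\gamma_{\fp,\beta_{\fp}}(e_i)$. Therefore
\[
(\iota_{\g}\otimes T_r^{-1})\bigl(D(\g,\beta,r)\bigr)=\sum_{i}(r\otimes e_i)\otimes\bigl(1\otimes\gamma_{\fp,\beta_{\fp}}(e_i)\bigr)
\]
inside $(R\otimes_{\C}\mathcal{U}(\fg))\otimes_{R}(R\otimes_{\C}Cl(\fp,\beta_{\fp}))$.

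Finally, I would push this through the canonical isomorphism $(R\otimes_{\C}\mathcal{U}(\fg))\otimes_R(R\otimes_{\C}Cl(\fp,\beta_{\fp}))\longrightarrow R\otimes_{\C}A(\fg,\beta,1)$ appearing in Section~\ref{The algebra A}, which sends $(r_1\otimes X)\otimes(r_2\otimes Y)$ to $r_1r_2\otimes (X\otimes Y)$. The $i$-th summand maps to $r\otimes (e_i\otimes\gamma_{\fp,\beta_{\fp}}(e_i))$, and summing recovers exactly $r\otimes D_{\fg,\beta}$ by the explicit formula for the classical Dirac operator in terms of an orthonormal basis. There is no real obstacle: the content is bookkeeping with the three natural maps $\iota_{\g}$, $T_r^{-1}$, and the canonical factor-swap isomorphism, and the only point worth flagging is that using the \emph{same} orthonormal basis on both sides aligns the formulas so that the single scalar $r$ is produced by the factor $r\otimes e_i$ on the enveloping-algebra side (the Clifford side contributes no $r$ once $T_r^{-1}$ has been applied).
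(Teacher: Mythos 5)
Your proposal is correct and follows essentially the same route as the paper: pick an orthonormal basis of $(\fp,\beta_{\fp})$, observe that $\{r\otimes e_i\}$ is orthonormal for $\boldsymbol{\beta}_{\p}^r$, apply $\iota_{\g}\otimes T_r^{-1}$ termwise using the defining property $T_r(1\otimes\gamma_{\fp,\beta_{\fp}}(e_i))=\gamma_{\p,\boldsymbol{\beta}_{\p}^r}(r\otimes e_i)$, and collect the single factor of $r$ through the canonical isomorphism. The only difference is that you spell out the final identification with $R\otimes_{\C}A(\fg,\beta,1)$ more explicitly, which the paper leaves implicit.
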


\begin{proof}
Let $\{e_1,e_2,....,e_n\}$ be an orthonormal basis of  $\fp $ with respect to $\beta_{\fp}$. Then $\{r\otimes e_1,r\otimes e_2,....,r\otimes e_n\}$ is an orthonormal basis of  $\p $ with respect to $\boldsymbol{\beta}_{\p}^r$.
Hence 
\begin{eqnarray}\nonumber
&& (\iota_{\g}\otimes T_r^{-1})D(\g,\beta,r)=\sum_{i}(\iota_{\g}\otimes T_r^{-1}) \left((r\otimes e_i)\otimes\gamma_{\p,\boldsymbol{\beta}_{\p}^r}(r\otimes e_i)\right)=\\ \nonumber
&& \sum_{i} (\iota_{\g}\otimes T_r^{-1})\left((r\otimes e_i)\otimes T_r(1\otimes \gamma_{\fp,\beta_{\fp}}(e_i))\right)=\\ \nonumber
&&\sum_{i} (r\otimes e_i)\otimes (1\otimes \gamma_{\fp,\beta_{\fp}}(e_i))=rD(R\otimes_{\C}\fg,\beta,1)
\end{eqnarray}

which implies $(\iota_{\g}T_r^{-1})D(\g,\beta,r)=r\otimes D_{\fg,\beta}$.  
\end{proof}

\subsubsection{The square of the Dirac operator }
\begin{theorem*}[The square of the Dirac operator]
In $A(\g,\beta,r)$ we have 
    \[2D^2(\g,\beta,r)=\Omega(\g,\beta,r)\otimes 1_{Cl(\p,\boldsymbol{\beta}_{\p}^r)}- r^2\Delta_{(\g,\beta_{\fp},r)} (\Omega (\boldsymbol{\fk},1\otimes \beta_{\fk}))  +r^2(\| \rho \|_{\beta}^2-\| \rho_{\fk} \|_{\beta_{\fk}}^2 )1_{A(\g,\beta,r)}\]
\end{theorem*}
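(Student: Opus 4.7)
The plan is to verify the identity after applying the injective $R$-algebra morphism $\iota_\g T_r^{-1}: A(\g,\beta,r)\to R\otimes_{\C} A(\fg,\beta,1)$ from Section \ref{The algebra A} to both sides, thereby reducing the claim to the classical group case recalled as formula \eqref{diracsquare}. Since $\iota_\g T_r^{-1}$ is an embedding, it suffices to check that the images of the two sides coincide.

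For the left-hand side, the lemma in Section \ref{DO} gives $(\iota_\g T_r^{-1})(D(\g,\beta,r)) = r\otimes D_{\fg,\beta}$. Because $\iota_\g T_r^{-1}$ is an algebra morphism and the multiplication in $R\otimes_\C A(\fg,\beta,1)$ is componentwise, one immediately obtains $2(\iota_\g T_r^{-1})\bigl(D(\g,\beta,r)^2\bigr) = r^2\otimes 2D_{\fg,\beta}^2$. Substituting \eqref{diracsquare} converts this to
\[ r^2\otimes\bigl(\Omega(\fg,\beta)\otimes 1 - \Omega(\fk_{\Delta},\beta|_{\fk}) + \|\rho\|_{\beta}^2 - \|\rho_\fk\|_{\beta_\fk}^2\bigr). \]

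The right-hand side of the theorem is then handled term by term. The first term maps to $r^2\otimes(\Omega(\fg,\beta)\otimes 1)$ directly from Definition \ref{casd}. For the middle term, I extend the Lie algebra embedding $\Delta_{(\g,\beta,r)}$ to the unique $R$-algebra morphism $\mathcal{U}(\boldsymbol{\fk})\to A(\g,\beta,r)$; choosing an orthonormal basis of $\fk$ with respect to $\beta|_\fk$ identifies $\Omega(\boldsymbol{\fk},1_R\otimes \beta_\fk)$ with $1_R\otimes \Omega(\fk,\beta_\fk)$, and the compatibility $\iota_\g T_r^{-1}\circ \Delta_{(\g,\beta,r)}(1_R\otimes X) = 1_R\otimes \Delta_\beta(X)$ established right after the definition of $\Delta_{(\g,\beta,r)}$ propagates via multiplicativity to this Casimir element, producing the image $r^2\otimes \Omega(\fk_\Delta,\beta|_\fk)$. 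The scalar term maps trivially to $r^2\otimes(\|\rho\|_\beta^2 - \|\rho_\fk\|_{\beta_\fk}^2)$. Matching all three contributions against those from the left-hand side concludes the argument.

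The main obstacle is not any single deep calculation but careful bookkeeping: one must verify that the extension of the diagonal embedding to $\mathcal{U}(\boldsymbol{\fk})$ intertwines with $\iota_\g T_r^{-1}$ on the Casimir element (and not merely on $\boldsymbol{\fk}$), and that the factor $r^2$ is tracked consistently through the squaring of $r\otimes D_{\fg,\beta}$ inside $R\otimes_\C A(\fg,\beta,1)$. Once these routine but delicate verifications are carried out, the result follows entirely from the group-case square formula \eqref{diracsquare}.
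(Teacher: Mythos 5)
Your proof is correct and follows essentially the same route as the paper: apply the injective morphism $\iota_\g T_r^{-1}$, use the lemma $(\iota_\g T_r^{-1})D(\g,\beta,r)=r\otimes D_{\fg,\beta}$, and invoke the group-case formula \eqref{diracsquare}. The only difference is that you spell out the term-by-term matching of the right-hand side's image, a verification the paper leaves implicit in its "which implies the stated theorem."
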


\begin{proof}
Using the last lemma and    Equation (\ref{diracsquare}) for the square of the Dirac operator of a reductive group (also  see \cite[Prop. 3.1.6]{pandzic}) we have
\[ 2D^2(\fg,\beta,1)=\Omega (\fg,\beta)\otimes 1_{Cl(\fp,\beta_{\fp})}-\Delta_{(\fg,\beta_{\fp},1)} (\Omega (\fk,\beta_{\fk}))+(\| \rho \|_{\beta}^2- \| \rho_{\fk} \|_{\beta_{\fk}}^2 )1_{\mathcal{U}(\fg)}\otimes 1_{Cl(\fp,\beta_{\fp})}. \]
Hence
\begin{eqnarray}\nonumber 
&&(\iota_{\g}T_r^{-1})\left(2D^2(\g,\beta,r)\right)=r^2\otimes 2D^2(\fg,\beta,1)=\\ \nonumber
&& r^2\otimes \left( \Omega (\fg,\beta)\otimes 1_{Cl(\fp,\beta_{\fp})}-\Delta_{(\fg,\beta_{\fp},1)} (\Omega (\fk,\beta_{\fk}))+(\| \rho \|_{\beta}^2- \| \rho_{\fk} \|_{\beta_{\fk}}^2 )1_{\mathcal{U}(\fg)}\otimes 1_{Cl(\fp,\beta_{\fp})}  \right)
\end{eqnarray}
which implies the stated theorem. 
\end{proof}

\subsection{Spin modules}
In this subsection we define the spin module for families. We show that a spin module for $Cl({\mathfrak{p}},\beta)$ naturally gives rise to a spin module for $Cl(\boldsymbol{\mathfrak{p}},\boldsymbol{\beta}_{\p}^r)$.
%We define and study dual maximal isotropic subspaces of a quadratic family. We show how such subspaces give rise to a spin module. We show  that for orthogonalizable forms dual maximal isotropic subspaces always exists.

\subsubsection{Spin modules over domains} 
Let $\l$ be  a free $R$-module equipped with
a symmetric $R$-bilinear form $ \boldsymbol{\beta}:\boldsymbol{\fl}\times \boldsymbol{\fl}\longrightarrow  R$. 
\begin{definition*}
   A \textit{spin module for the Clifford algebra ${Cl}(\l,\boldsymbol{\beta})$} is a free $R$-module $S$ that is a module for  ${Cl}(\l,\boldsymbol{\beta})$ such that $k(R)\otimes_R S$ is a simple module for $k(R)\otimes_R {Cl}(\l,\boldsymbol{\beta})\cong {Cl}(k(R)\otimes_R \l,k(R)\otimes_R \boldsymbol{\beta})$, where  $k(R)$ is the fraction field of $R$.
\end{definition*}

\subsubsection{Spin modules for $Cl(\boldsymbol{\fp},\boldsymbol{\beta}_{\boldsymbol{\fp}}^r)$ via scalar extension}\label{482}
Recall that any spin module for the Clifford algebra of a real reductive group can be realized, as in Section \ref{spinmodule}, on the exterior algebra $\bigwedge \mathfrak{p}^-$ of a maximal isotropic subspace $\fp^-$ of $\fp$ with respect to the form $\beta$. We denote the corresponding action map by 
\begin{equation}\nonumber
\gamma_{\mathfrak{p}^-,\beta}':Cl(\mathfrak{p},\beta)\longrightarrow \mathrm{End}_{\C}\left(\bigwedge \mathfrak{p}^-\right).
\end{equation}
Using scalar extension,  the obvious  map 
\begin{equation}\nonumber
\mathbb{I}_R\otimes  \gamma_{\mathfrak{p}^-,\beta}':Cl(R\otimes_{\C}\mathfrak{p},1_R\otimes\beta)\longrightarrow \mathrm{End}_{R}\left(\bigwedge (R\otimes_{\C}\mathfrak{p}^-)\right),
\end{equation}
equips $\bigwedge (R\otimes_{\C}\mathfrak{p}^-)$ with the structure of  a spin module for $Cl(R\otimes_{\C}\mathfrak{p},1_R\otimes\beta)$.

Recall that $\p=\langle r\rangle\otimes_{\C}\fp $. 
As in Section \ref{spinmodule}, in the even-dimensional case we have $\fp=\fp^-\oplus \fp^+$ and in the odd-dimensional case we have $\fp=\fp^-\oplus \fp^+\oplus \fp^0$. 
We set $\p^{\pm}:=\langle r\rangle\otimes_{\C}\fp^{\pm}$, and in the odd dimensional case we also define $\p^{0}:=\langle r\rangle\otimes_{\C}\fp^{0}$. 

  The isometry $T_r:(R\otimes_{\C} \fp,\beta_{\fp}) \longrightarrow (\boldsymbol{\fp} ,\boldsymbol{\beta}_{\boldsymbol{\fp}}^r)$ introduced in  Section \ref{The algebra A},  induces an isomorphism 
$T_r:R\otimes_{\C} {Cl}(\fp,{ \beta}_{\fp}) \longrightarrow  {Cl}(\boldsymbol{\fp},\boldsymbol{\beta}_{\boldsymbol{\fp}}^r)$. It also  induces an isomorphism $T_{r,\fp^-}:R\otimes_{\C} \bigwedge \mathfrak{p}^- \longrightarrow  \bigwedge \p^-$. Using these isomorphisms, we  equip $\bigwedge \p^-$ with the   structure of a spin module for $Cl(\boldsymbol{\fp},\boldsymbol{\beta}_{\boldsymbol{\fp}}^r)$   
\begin{eqnarray}\nonumber
&& \boldsymbol{\gamma}_{\boldsymbol{\fp}^-,\boldsymbol{\beta}_{\boldsymbol{\fp}}^r}':Cl(\boldsymbol{\fp},\boldsymbol{\beta}_{\boldsymbol{\fp}}^r)\longrightarrow \mathrm{End}_{R}\left(\bigwedge \p^-\right)%\\ \nonumber
%&& \left(\boldsymbol{\gamma}_{\boldsymbol{\fp}^-,\boldsymbol{\beta}_{\boldsymbol{\fp}}^r}'(X)\right)(Y)= T_{r,\fp^-}\left(\left(\left(\mathbb{I}_R\otimes  \gamma_{\mathfrak{p}^-,\beta}'\right)(T_r^{-1}(X))\right) (T_{r,\fp^-}^{-1}(Y))\right), \forall X\in \p. Y\in \bigwedge \p^-.
\end{eqnarray}
which is the unique map of $R$-algebras satisfing 
\[\left(\boldsymbol{\gamma}_{\boldsymbol{\fp}^-,\boldsymbol{\beta}_{\boldsymbol{\fp}}^r}'(X)\right)(Y)= T_{r,\fp^-}\left(\left(\left(\mathbb{I}_R\otimes  \gamma_{\mathfrak{p}^-,\beta}'\right)(T_r^{-1}(X))\right) (T_{r,\fp^-}^{-1}(Y))\right), \]
for every  $X\in \p. Y\in \bigwedge \p^-$. 
Equivalently, the diagram  
\[\xymatrix{
& Cl(\boldsymbol{\fp},\boldsymbol{\beta}_{\boldsymbol{\fp}}^r)\otimes_R \bigwedge \p^- \ar[r]^{a_{\boldsymbol{\beta}_{\boldsymbol{\fp}}^r}}
& \bigwedge \p^-  \\
&  \left(R\otimes_{\C} Cl(\mathfrak{p},\beta)\right)\otimes_R\left( R\otimes_{\C} \bigwedge \fp^-\right)\ar[d]^{m\otimes \mathbb{I}_{Cl(\mathfrak{p},\beta)}\otimes  \mathbb{I}_{\bigwedge \fp^-}}     \ar[u]^{T_{r}\otimes T_{r,\fp^-}} &\\
&R\otimes_{\C} Cl(\mathfrak{p},\beta)\otimes_{\C} \bigwedge \fp^-       \ar[r]^{\hspace{12mm}\mathbb{I}_R\otimes a_{{\beta}}} & R\otimes_{\C} \bigwedge \fp^-\ar[uu]_{T_{r,\fp^-}} }\]
with the upper horizontal map $a_{\boldsymbol{\beta}_{\boldsymbol{\fp}}^r}$ being  the action arising from $\boldsymbol{\gamma}_{\boldsymbol{\fp}^-,\boldsymbol{\beta}_{\boldsymbol{\fp}}^r}'$, the  map $a_{\beta}$ being  the action arising from  $\gamma_{\mathfrak{p}^-,\beta}'$, and $m:R\otimes_R R\longrightarrow R$ the obvious multiplication,  is a commutative diagram. 

We shall also use the notation $\boldsymbol{S}(\p,\boldsymbol{\beta}_{\boldsymbol{\fp}}^r,\p^-)$ for the spin module $\bigwedge \p^-$.

It is immediate to check that   the following  hold.
\begin{enumerate}
    \item For any $X\in \p^-$ and $Y\in \bigwedge \p^-$,  
\[\boldsymbol{\gamma}_{\boldsymbol{\fp}^-,\boldsymbol{\beta}_{\boldsymbol{\fp}}^r}'(\boldsymbol{\gamma}_{\p,\boldsymbol{\beta}_{\p}^r}(X))(Y)=X\wedge Y.\]
\item For any $X\in \p^+$,  $\boldsymbol{\gamma}_{\p,\boldsymbol{\beta}_{\p}^r}(X)$ acts on $\bigwedge \p^-$ as the unique  graded derivation of degree $-1$ satisfying 
\[\boldsymbol{\gamma}_{\boldsymbol{\fp}^-,\boldsymbol{\beta}_{\boldsymbol{\fp}}^r}'(\boldsymbol{\gamma}_{\p,\boldsymbol{\beta}_{\p}^r}(X))(Y)=\boldsymbol{\beta}_{\boldsymbol{\fp}}^r(X,Y), \quad \forall Y\in \p^-\subset\bigwedge \p^-. \] 
\item In the odd rank case, for any $X\in \fp^0$ and $Y\in \bigwedge \fp^-$ and $f\in R$ such that $f\otimes Y\in \bigwedge \p^-$
\[\boldsymbol{\gamma}_{\boldsymbol{\fp}^-,\boldsymbol{\beta}_{\boldsymbol{\fp}}^r}'(\boldsymbol{\gamma}_{\p,\boldsymbol{\beta}_{\p}^r}(r\otimes X))(f\otimes Y)= f\otimes {\gamma}_{{\fp}^-,{\beta}}'({\gamma}_{\fp,\beta}(X))(Y). \] 
\end{enumerate}

\subsubsection{The action of $\widetilde{K}$ on $\boldsymbol{S}$}
\label{dc}
%\begin{equation}\nonumber
%	\mathfrak{k}\overset{\mathrm{ad}}{\longrightarrow} \mathfrak{so}(\mathfrak{p},\beta)\overset{\varphi_{\beta}}{\longrightarrow} Cl(\mathfrak{p},\beta)\overset{\gamma_{\fp^-,\beta}'}{\longrightarrow} \mathrm{End}(S(\fp,\beta,\fp^-)).
%\end{equation} 

In Section \ref{451} we constructed the  morphism of Lie algebras
\[\alpha_{\boldsymbol{\fp},\boldsymbol{\beta}_{\boldsymbol{\fp}}^r}=\varphi^r_{\boldsymbol{\fp}} \circ \operatorname{ad}^{\boldsymbol{\fk}}:\k \longrightarrow  Cl(\boldsymbol{\fp},\boldsymbol{\beta}_{\boldsymbol{\fp}}^r).\] 
By composing it with $\boldsymbol{\gamma}'_{\p^-,\boldsymbol{\beta}_{\p}^r}$
we obtain a representation of $\k$ on $\bigwedge \p^-$ and by restriction we obtain a representation of $1\otimes_{\C} \fk\simeq \fk$ via  endomorphisms of $R$-modules. 
\begin{lemma*}
     The representation   $\boldsymbol{\gamma}'_{\p^-,\boldsymbol{\beta}_{\p}^r}\circ \alpha_{\boldsymbol{\fp},\boldsymbol{\beta}_{\boldsymbol{\fp}}^r}|_{\fk}:\fk\longrightarrow \mathrm{End}_{R}\left(\bigwedge \p^-\right)$ can be lifted to a representation of the spin double cover $\widetilde{K}$ via automorphisms of $R$-modules.
\end{lemma*}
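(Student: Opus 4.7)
The plan is to transport the classical lift recalled in Section \ref{Spinreprese} to the family setting via the isometry $T_{r,\fp^-}$, thereby reducing the family statement to the group case.

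To begin, Section \ref{Spinreprese} provides an algebraic representation $\pi_{\widetilde{K},S}:\widetilde{K}\to \operatorname{Aut}_{\C}\bigl(\bigwedge \fp^-\bigr)$ whose differential at $X\in\fk$ equals $\gamma'_{\fp^-,\beta}(\alpha_{\fp,\beta_{\fp}^1}(X))$. An $R$-linear extension yields an algebraic representation $\widetilde{\pi}_R(k):=\mathbb{I}_R\otimes \pi_{\widetilde{K},S}(k)$ of $\widetilde{K}$ on the free $R$-module $R\otimes_{\C}\bigwedge \fp^-$ by $R$-module automorphisms. Conjugating by the $R$-module isomorphism $T_{r,\fp^-}:R\otimes_{\C}\bigwedge \fp^-\to \bigwedge \p^-$ from Section \ref{482} produces the candidate lift
\[
\pi^r_{\widetilde{K}}(k) := T_{r,\fp^-}\circ \widetilde{\pi}_R(k)\circ T_{r,\fp^-}^{-1},\qquad k\in \widetilde{K},
\]
which, by construction, is an algebraic representation of $\widetilde{K}$ on $\bigwedge \p^-$ by $R$-module automorphisms.

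Next I would verify that the differential of $\pi^r_{\widetilde{K}}$ at $1\otimes X\in\fk\subset\k$ coincides with $\boldsymbol{\gamma}'_{\p^-,\boldsymbol{\beta}_{\p}^r}(\alpha_{\boldsymbol{\fp},\boldsymbol{\beta}_{\boldsymbol{\fp}}^r}(1\otimes X))$. On the one hand, direct differentiation gives
\[
T_{r,\fp^-}\circ \bigl(\mathbb{I}_R\otimes (\gamma'_{\fp^-,\beta}\circ\alpha_{\fp,\beta_{\fp}^1})(X)\bigr)\circ T_{r,\fp^-}^{-1}.
\]
On the other hand, the identity $\alpha_{\boldsymbol{\fp},\boldsymbol{\beta}_{\boldsymbol{\fp}}^r} = T_r\circ (\mathbb{I}_R\otimes \alpha_{\fp,\beta_{\fp}^1})$ from the proposition in Section \ref{451}, combined with the commutative diagram defining $\boldsymbol{\gamma}'_{\p^-,\boldsymbol{\beta}_{\p}^r}$ in Section \ref{482}, produces exactly the same operator. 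This confirms that $\pi^r_{\widetilde{K}}$ is the required lift.

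The only obstacle worth flagging is bookkeeping: one must carefully reconcile the two incarnations of the intertwiner $T_r$ — as an isometry on the Clifford algebra (Section \ref{The algebra A}) and as the induced map $T_{r,\fp^-}$ on the spin module (Section \ref{482}) — together with the $R$-linear extension of the classical $\widetilde{K}$-action. Once those compatibilities are assembled into the diagram of Section \ref{482}, the verification becomes a routine diagram chase and no new ingredient is required beyond the classical lift.
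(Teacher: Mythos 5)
Your proposal is correct and follows essentially the same strategy as the paper: conjugation by the $\widetilde{K}$-equivariant isomorphism $T_{r,\fp^-}$ identifies the family $\fk$-action on $\bigwedge\p^-$ with the $R$-linear extension of the classical $\fk$-action on $\bigwedge\fp^-$, and the lift is then transported from the classical case recalled in Section \ref{Spinreprese}. The paper phrases the key computation as conjugating the family action by $T_{r,\fp^-}^{-1}$ and matching it to the scalar-extended classical action, whereas you build the candidate lift first and then match differentials; these are the same verification read in opposite directions.
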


 \begin{proof}
By a direct calculation, for every $X\in \fk$ and $f\otimes Y\in \bigwedge (R\otimes_{\C}\fp^-)$,
\begin{eqnarray}\nonumber
&&    T_{r,\fp^-}^{-1} \left(\left(\boldsymbol{\gamma}'_{\p^-,\boldsymbol{\beta}_{\p}^r}\circ \alpha_{\boldsymbol{\fp},\boldsymbol{\beta}_{\boldsymbol{\fp}}^r}|_{\fk}(1\otimes X)\right)(T_{r,\fp^-}(f \otimes Y))\right)=\\ \nonumber
&&T_{r,\fp^-}^{-1} \left(a_{\boldsymbol{\beta}_{\boldsymbol{\fp}}^r}(\alpha_{\boldsymbol{\fp},\boldsymbol{\beta}_{\boldsymbol{\fp}}^r}(1\otimes X)\otimes T_{r,\fp^-}(f \otimes Y ))\right)= f\otimes a_{\beta}(\alpha_{{\fp},{\beta}_{{\fp}}^1}(X),Y).
\end{eqnarray}
%\[T_{r,\fp^-}^{-1} \left(a_{\boldsymbol{\beta}_{\boldsymbol{\fp}}^r}(\alpha_{\boldsymbol{\fp},\boldsymbol{\beta}_{\boldsymbol{\fp}}^r}(1\otimes X),T_{r,\fp^-}(\rho \otimes Y )\right)= \rho \otimes a_{\beta}(\alpha_{{\fp},{\beta}_{{\fp}}^1}(X),Y).\]
Since the right hand side of the last equation defines a representation of $\fk$ on $\bigwedge (R\otimes_{\C}\fp^-)$ that is integrable to a representation of $\widetilde{K}$ via $R$-linear automorphisms and  since $T_{r,\fp^-}$ is a $\mathfrak{k}$-equivariant isomorphism the lemma follows. 
 \end{proof}
  For a later use we shall denote the $\widetilde{K}$ representation on $\bigwedge \p^-$  by 
  \[\pi_{\widetilde{K},\boldsymbol{S}}:\widetilde{K}\longrightarrow \operatorname{Aut}_R\left( \boldsymbol{S}(\p,\boldsymbol{\beta}_{\boldsymbol{\fp}}^r,\p^-)\right).\]

\subsection{Dirac cohomology}
In this subsection we introduce the Dirac cohomology of algebraic families of $(\g,K)-$modules.
\subsubsection{ From  $(\g,K)-$modules to $(\boldsymbol{A},\widetilde{K})$-modules}

Let $\boldsymbol{V}$ be   a $(\g,K)$-module  with actions of $K$ and $\g$ that are denoted by
\[\pi_{K,\boldsymbol{V}}:K\longrightarrow \operatorname{Aut}_{R}(\boldsymbol{V}),\quad \text{and}\quad \pi_{\g,\boldsymbol{V}}:\boldsymbol{\mathfrak{g}} \longrightarrow \operatorname{End}_{R}(\boldsymbol{V}),\]
respectively. 
The $R$-module $\boldsymbol{V}\otimes_R \boldsymbol{S}(\p,\boldsymbol{\beta}_{\p}^r,\p^-)$ is an $ A(\g,\beta,r) $-module via
\[\pi_{A,\boldsymbol{V}}:A(\g,\beta,r)\longrightarrow  \operatorname{End}_{R}(\boldsymbol{V}\otimes_R \boldsymbol{S}(\p,\boldsymbol{\beta}_{\p}^r,\p^-)),\]
satisfying 
\[  \left(\pi_{A,\boldsymbol{V}}(X\otimes Y)\right)(v\otimes s)=\pi_{\g,\boldsymbol{V}}(X)v\otimes \boldsymbol{\gamma}'_{\p^-,\boldsymbol{\beta}_{\p}^r}(Y)s, \]
for every $X\in \g$, $Y\in Cl(\boldsymbol{\fp},\boldsymbol{\beta}_{\boldsymbol{\fp}}^r)$, $v\in \boldsymbol{V}$ and $s\in \boldsymbol{S}(\p,\boldsymbol{\beta}_{\p}^r,\p^-)$.

 Using the cover map $\widetilde{K}\mapsto K$, $\boldsymbol{V}$ carries an action  of $\widetilde{K}$. As explained in \ref{dc}, the spin module $\boldsymbol{S}(\p,\boldsymbol{\beta}_{\p}^r,\p^-)$ also carries an action  of $\widetilde{K}$. Hence 
$\boldsymbol{V}\otimes_R \boldsymbol{S}(\p,\boldsymbol{\beta}_{\p}^r,\p^-)$  is a representation of $\widetilde{K}$ via automorphisms of $R$-modules as a tensor product of such. We denote this action by 
$\pi_{\widetilde{K},\boldsymbol{V}}:\widetilde{K}\longrightarrow \operatorname{Aut}_{R}(\boldsymbol{V}\otimes_R \boldsymbol{S}(\p,\boldsymbol{\beta}_{\p}^r,\p^-)).$
The actions are compatible in the sense that for every $X\in \fk$ and $z\in \boldsymbol{V}\otimes_R \boldsymbol{S}(\p,\boldsymbol{\beta}_{\p}^r,\p^-)$,
\[  \left(\pi_{A,\boldsymbol{V}} \circ \Delta_{(\g,\beta,r)}(1\otimes X)\right)(z)=d\pi_{\widetilde{K},\boldsymbol{V}}(X)(z).  \]
This make $\boldsymbol{V}\otimes_R \boldsymbol{S}(\p,\boldsymbol{\beta}_{\p}^r,\p^-)$  into a module for the algebraic family of generalized pairs $(A(\g,\beta,r),\widetilde{K})$. 
\subsubsection{The Dirac cohomology}
 The operator $\pi_{A,\boldsymbol{V}}(D(\g,\beta,r))\in \operatorname{End}_{R}(\boldsymbol{V}\otimes_R \boldsymbol{S}(\p,\boldsymbol{\beta}_{\p}^r,\p^-))$
   is $\widetilde{K}$-equivariant so that the quotient space  
 \begin{equation*}
	H_{D(\g,\beta,r)}(\boldsymbol{V}):=\frac{\ker \pi_{A,\boldsymbol{V}}(D(\g,\beta,r))}{\ker \pi_{A,\boldsymbol{V}}(D(\g,\beta,r))\cap \mathrm{im}\hspace{0.5mm}\pi_{A,\boldsymbol{V}}(D(\g,\beta,r))}
\end{equation*} 
is an $R$-module that carries an action  of $\widetilde{K}$ via automorphisms of $R$-modules.  This space is called \textit{the Dirac cohomology of $\boldsymbol{V}$}.

\section{Vogan's conjecture}\label{Vo}
In this section we formulate and prove  Vogan's conjecture, first for constant families and then for the variants of the deformation family introduced  in Section 
\ref{vdf}.

We keep our running assumptions and notations. In particular $G(\R)$ is a connected real reductive group.  The invariant form $\beta$ on $\fg$ is positive definite on $\fp^{\sigma}$ and negative definite on  $\fk^{\sigma}$ and its restriction to $[\fg,\fg]$ coincides with its  Killing form. 
We fix $\theta$-stable triangular decompositions 
\[\mathfrak{g}=\mathfrak{n}^-\oplus \mathfrak{h}\oplus \mathfrak{n}^+, \quad \mathfrak{k}=\mathfrak{n}_{\mathfrak{k}}^-\oplus \mathfrak{t}\oplus \mathfrak{n}^+_{\mathfrak{k}}, \]
with $\mathfrak{h}=\mathfrak{t}\oplus \mathfrak{a}$ being a fundamental Cartan subalgebra of $\fg$ and   $\mathfrak{n}^{\pm}_{\mathfrak{k}}\subseteq \mathfrak{n}^{\pm}$.

The ring $R$ is a principal ideal domain containing $\C$.

%We shall first recall our setup. 
The Dirac cohomology $H_{D(\g,\beta,r)}(\boldsymbol{V})$ is always a complex representation of $\widetilde{K}$. 
As in Section \ref{VC} we shall parameterize the irreducible complex algebraic representations of $\widetilde{K}$ by integral dominant weights  in $\ft^*$. 
We shall say that the Dirac cohomology of $\boldsymbol{V}$ 
contains  a $\widetilde{K}$-type of highest weight $\mu\in \ft^*$ 
(with respect to our chosen positive system) if it does so as a complex representation of $\widetilde{K}$.

\subsection{Vogan's conjecture for constant families }\label{51}
In this subsection we formulate and prove Vogan's conjecture for constant families.

\begin{theorem*}
    Let $\boldsymbol{V}$  be a generically  irreducible $(R\otimes_{\C}\fg,K)$-module with non-zero Dirac cohomology. Then 
$\boldsymbol{V}$  has   an infinitesimal character $\boldsymbol{\lambda}\in (R\otimes_{\C}\mathfrak{h})^*$ with respect to $R\otimes_{\C}\fh$. 
Moreover,  if $\hspace{0.5mm}H_{D(R\otimes_{\C}\fg,\beta,1)}(\boldsymbol{V})$ contains a $\widetilde{K}$-type of highest weight $\mu\in\mathfrak{t}^*$, then $\boldsymbol{\lambda}$   is $W(\fg,\fh)$-conjugate to $\mathbb{I}_R\otimes (\mu+ \rho_\mathfrak{k})^{\ft}_{\fh}\in R^*\otimes_{\C}\fh^*= (R\otimes_{\C}\mathfrak{h})^*$.
\end{theorem*}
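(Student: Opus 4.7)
The plan is to reduce the theorem to the classical Vogan conjecture of \cite{huangpandzic} by scalar extension. First, I would recall the key algebraic input of Huang--Pand\v{z}i\'{c}: there is a $\mathbb{C}$-algebra homomorphism $\zeta:\mathcal{Z}(\fg)\to\mathcal{Z}(\fk_{\Delta})$, compatible with the Harish--Chandra isomorphisms of $\fg$ and $\fk$ via the standard $\rho$-shift, such that for every $z\in\mathcal{Z}(\fg)$ there is $a_z\in A(\fg,\beta)$ with
\[
z\otimes 1 - \Delta_{\beta}(\zeta(z)) \;=\; D_{\fg,\beta}\,a_z + a_z\,D_{\fg,\beta}.
\]
Via the canonical isomorphism $A(R\otimes_{\C}\fg,\beta,1)\cong R\otimes_{\C} A(\fg,\beta)$ of Section~\ref{The algebra A}, this identity extends $R$-linearly: for every $\boldsymbol{z}\in \mathcal{Z}(R\otimes_{\C}\fg)= R\otimes_{\C}\mathcal{Z}(\fg)$, there exists $\boldsymbol{a}\in A(R\otimes_{\C}\fg,\beta,1)$ with
\[
\boldsymbol{z}\otimes 1 - \Delta_{(R\otimes_{\C}\fg,\beta,1)}\!\bigl((\mathbb{I}_{R}\otimes\zeta)(\boldsymbol{z})\bigr) \;=\; D(R\otimes_{\C}\fg,\beta,1)\,\boldsymbol{a} + \boldsymbol{a}\,D(R\otimes_{\C}\fg,\beta,1).
\]

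Applying this to $\boldsymbol{V}\otimes_{R}\boldsymbol{S}$ and passing to $H_{D(R\otimes_{\C}\fg,\beta,1)}(\boldsymbol{V})$, I conclude that $\boldsymbol{z}$ and $\Delta\bigl((\mathbb{I}_{R}\otimes\zeta)(\boldsymbol{z})\bigr)$ induce the same endomorphism. Restricting to the non-zero $\widetilde{K}$-isotypic component of highest weight $\mu$, the element $1\otimes \Delta_{\beta}(\zeta(z))$ lies in $1\otimes \mathcal{Z}(\fk_{\Delta})$, commutes with the $\widetilde{K}$-action, and acts on the irreducible $\widetilde{K}$-representation of highest weight $\mu$ by the scalar $(\mu+\rho_{\fk})\bigl(\operatorname{HC}_{\ft}^{\fk}(\zeta(z))\bigr)$. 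Huang--Pand\v{z}i\'{c}'s identification of $\zeta$ with the map induced on centers by the two Harish--Chandra isomorphisms then forces this scalar, up to the action of $W(\fg,\fh)$ on $\fh^{*}$, to equal $\widehat{(\mu+\rho_{\fk})^{\ft}_{\fh}}\bigl(\operatorname{HC}_{\fh}^{\fg}(z)\bigr)$. Hence $1\otimes z$ acts on the $\mu$-isotypic component of $H_{D}(\boldsymbol{V})$ by a fixed scalar in $\C\subseteq R$ determined by $\mu$.

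To globalize this to an infinitesimal character for $\boldsymbol{V}$, I would localize at the generic point of $\operatorname{Spec}_{m}(R)$: by generic irreducibility the module $\boldsymbol{V}\otimes_{R}\kappa(R)$ is an irreducible $(\kappa(R)\otimes_{\C}\fg,K)$-module, so Dixmier's lemma over $\kappa(R)$ produces a central character $\chi:\mathcal{Z}(\kappa(R)\otimes_{\C}\fg)\to \kappa(R)$. The previous paragraph forces $\chi(1\otimes z)$ to coincide with the $\mathbb{C}$-valued scalar already computed; in particular $\chi$ takes values in $\C\subseteq R$ and, by $R$-linear extension, descends to a morphism $\chi_{\boldsymbol{V}}:\mathcal{Z}(R\otimes_{\C}\fg)\to R$, which is the sought central character of $\boldsymbol{V}$. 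Matching the formula just computed against the constant family Harish--Chandra isomorphism $\boldsymbol{\operatorname{HC}_{\fh}^{\mathfrak{g}}}=\mathbb{I}_{R}\otimes \operatorname{HC}_{\fh}^{\fg}$ yields $\chi_{\boldsymbol{V}}=\widehat{\mathbb{I}_{R}\otimes (\mu+\rho_{\fk})^{\ft}_{\fh}}\circ \boldsymbol{\operatorname{HC}_{\fh}^{\mathfrak{g}}}$, which gives both the existence of the infinitesimal character $\boldsymbol{\lambda}$ with respect to $R\otimes_{\C}\fh$ and its explicit form as $\mathbb{I}_{R}\otimes (\mu+\rho_{\fk})^{\ft}_{\fh}$, up to $W(\fg,\fh)$-conjugacy.

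The main obstacle I expect is controlling the $W(\fg,\fh)$-conjugacy uniformly across the family. The classical Huang--Pand\v{z}i\'{c} argument only identifies a Weyl orbit, and a priori the orbit representative realising the infinitesimal character could jump between fibers of $\boldsymbol{V}$. The generic-point localization above is the natural way around this: over the field $\kappa(R)$ one obtains a single well-defined character whose restriction to $\mathcal{Z}(R\otimes_{\C}\fg)$ gives a uniform choice of representative, which must then be shown to take values in $R$ by comparing against the $\mathbb{C}$-valued scalar action on the $\mu$-isotypic component on a Zariski-dense set of fibers.
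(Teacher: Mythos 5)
Your proposal follows essentially the same route as the paper's proof: both apply the scalar-extended Huang--Pand\v{z}i\'c relation $z\otimes 1-\Delta_{\beta}(\zeta(z))\in D\cdot A+A\cdot D$ (transported through $A(R\otimes_{\C}\fg,\beta,1)\cong R\otimes_{\C}A(\fg,\beta)$) and read off the scalar by which $\mathcal{Z}(\fg)$ acts on the highest-weight-$\mu$ piece of the Dirac cohomology via the commuting square relating $\zeta$ to the two Harish--Chandra maps. The differences are in packaging. First, you pass directly to the induced endomorphism of $H_D(\boldsymbol{V})$ and then restrict to the $\mu$-isotypic component, whereas the paper fixes a highest-weight vector $x\in\ker D\setminus\operatorname{im}D$ and chases the scalars through a chain of equalities; these are equivalent, and your phrasing is slightly cleaner. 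Second, to produce the infinitesimal character you localize to $\kappa(R)$, invoke Dixmier's lemma there, and descend; the paper instead uses that generic irreducibility already makes $\boldsymbol{V}$ quasi-simple, so a central character $\chi_{\boldsymbol{V}}:\mathcal{Z}(R\otimes_{\C}\fg)\to R$ is available from the start, and then simply equates $\chi_{\boldsymbol{V}}(1\otimes z)$ with the $\C$-valued scalar just computed. Your detour through $\kappa(R)$ is not needed for the constant family (the paper reserves localization for the deformation-like families in Section~\ref{52}), and it tacitly presupposes $H_D(\boldsymbol{V})\otimes_R\kappa(R)\neq 0$ — i.e.\ that the nonzero Dirac cohomology is not entirely $R$-torsion — in order to compare the $\kappa(R)$-valued central character against the $\C$-valued action you observed on the $\mu$-component. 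Finally, the worry about the $W(\fg,\fh)$-orbit representative ``jumping between fibers'' is misplaced: the computation gives the single explicit formula $\chi_{\boldsymbol{V}}=\widehat{\mathbb{I}_R\otimes(\mu+\rho_{\fk})^{\ft}_{\fh}}\circ(\mathbb{I}_R\otimes\operatorname{HC}_{\fh}^{\fg})$ uniformly in $R$, so there is nothing to pin down fiberwise; the $W(\fg,\fh)$-conjugacy enters only in the uniqueness clause, exactly as in the classical Huang--Pand\v{z}i\'c theorem.
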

%$(\mu+\rho_\mathfrak{k})^{\ft}_{\fh}$

We shall prove the above theorem by adapting the proof of \cite[Theorem 3.2.5]{pandzic} to our context.  
%We shall present it here for the sake of completeness.
\begin{proof}   
Since $\boldsymbol{V}$ is generically irreducible it has a central infinitesimal character $\chi_{\boldsymbol{V}}$.

According to \cite[Theorems 3.2.7, 3.2.8]{pandzic}, there exists a  morphism of complex algebras 
\begin{equation*}
	\zeta: \mathcal{Z}(\mathfrak{g})\rightarrow \mathcal{Z}(\mathfrak{k}_\Delta)
\end{equation*}
which fits into the commutative diagram 
 \[\xymatrix{
 \mathcal{Z}({{\mathfrak{g}}}) \ar[d]^{\zeta}\ar[rr]_{\operatorname{HC}_{\fh}^{\mathfrak{g}}}
&& \operatorname{S}({{\mathfrak{h}}})^{W(\fg,\fh)} \ar[d]^{\operatorname{Res}} \\
\mathcal{Z}(\mathfrak{k}_{\Delta}) \ar[r]^{\Delta_{\beta}^{-1}} &\mathcal{Z}(\mathfrak{k})\ar[r]^{\hspace{-4mm}  \operatorname{HC}_{\ft}^{\mathfrak{k}}} &  \operatorname{S}({{\mathfrak{t}}})^{W(\fk,\ft)} }\]
and such that for every $z\in \mathcal{Z}(\mathfrak{g})$, there are  $a,b\in A(\fg,\beta)$ such that 
\begin{equation*}
	z\otimes 1=\zeta(z)+Da+bD.
\end{equation*}
Here $\Delta_{\beta}$ stands for the isomorphism from  
$\mathcal{U}({{\mathfrak{k}}})$ onto its image inside   
${{A}}$ and also for its restriction to $\mathcal{Z}({{\mathfrak{k}}})$ 
which is mapped onto
$\mathcal{Z}({{\mathfrak{k}}}_{{{\Delta}}})
$.  
The map $\operatorname{Res}$  is obtained from the projection from $\fh$ onto $\ft$ with respect to the decomposition $\fh=\ft\oplus \mathfrak{a}$.

By scalar extension, that is, by applying the functor $R\otimes_{\C}\hspace{-1mm}\hspace{0.3mm}\_$, we obtain the commutative diagram of $R$-algebras
\[\xymatrix{
R\otimes_{\C}\mathcal{Z}( \fg) \ar[dd]^{\mathbb{I}_R\otimes {\zeta}}\ar[rrr]_{\mathbb{I}_R\otimes \operatorname{HC}_{\fh}^{\mathfrak{g}}} 
&& &R\otimes_{\C}\operatorname{S}(\fh)^{W(\fg,\fh)} \ar[dd]^{\mathbb{I}_R\otimes \operatorname{Res}} \\
&&&\\
R\otimes_{\C}\mathcal{Z}( \fk_{\Delta})\ar[rr]^{\hspace{0mm}\mathbb{I}_R\otimes {\Delta}_{\beta}^{-1} } &&R\otimes_{\C}\mathcal{Z}(\fk)\ar[r]^{ \hspace{-4mm}\mathbb{I}_R\otimes \operatorname{HC}_{\ft}^{\mathfrak{k}}} &  R\otimes_{\C}\operatorname{S}(\ft)^{W(\fk,\ft)} %\\
 }\]
which leads to an isomorphic commutative diagram 
\[\xymatrix{
\mathcal{Z}( R\otimes_{\C}\fg) \ar[dd]^{\mathbb{I}_R\otimes {\zeta}}\ar[rrr]_{\mathbb{I}_R\otimes \operatorname{HC}_{\fh}^{\mathfrak{g}}} 
&& &\operatorname{S}(R\otimes_{\C}\fh)^{W(\fg,\fh)} \ar[dd]^{\mathbb{I}_R\otimes \operatorname{Res}} \\
&&&\\
\mathcal{Z}( (R\otimes_{\C}\fk)_{\Delta_{(R\otimes_{\C}\fg,\beta,1)}})\ar[rr]^{\hspace{10mm}\Delta_{(R\otimes_{\C}\fg,\beta,1)}^{-1} } &&\mathcal{Z}(R\otimes_{\C}\fk)\ar[r]^{ \hspace{-4mm}\mathbb{I}_R\otimes \operatorname{HC}_{\ft}^{\mathfrak{k}}} &  \operatorname{S}(R\otimes_{\C}\ft)^{W(\fk,\ft)} %\\
 }\]
It follows that for every $z\in \mathcal{Z}(\fg)$ there are $a,b\in A(R\otimes_{\C}\fg,\beta,1)$ such that 
\begin{equation}\nonumber
		(1_R\otimes z)\otimes 1_{Cl(R\otimes_{\C}\fp,1_R\otimes {\beta})}=\left(\mathbb{I}_R\otimes {\zeta}\right)(1_R\otimes z)+D(R\otimes_{\C}\fg,\beta,1)a+bD (R\otimes_{\C}\fg,\beta,1).
	\end{equation}

If $x\in \operatorname{Ker}  \pi_{A,\boldsymbol{V}}(D(R\otimes_{\C}\fg,\beta,1))\setminus \operatorname{Im}  \pi_{A,\boldsymbol{V}}(D(R\otimes_{\C}\fg,\beta,1))$ is a highest weight  vector  for the action of $\fk$ of weight $\mu \in \ft^*$, then   
\begin{eqnarray}\nonumber
&& \pi_{A,\boldsymbol{V}}\left( D(R\otimes_{\C}\fg,\beta,1)a\right)x=\\ \nonumber
&& \pi_{A,\boldsymbol{V}}\left( D(R\otimes_{\C}\fg,\beta,1)a+bD (R\otimes_{\C}\fg,\beta,1) \right)x=\\ \nonumber
&&     \pi_{A,\boldsymbol{V}}\left((1_R\otimes z)\otimes 1_{Cl(R\otimes_{\C}\fp,1_R\otimes {\beta})}-\left(\mathbb{I}_R\otimes {\zeta}\right)(1_R\otimes z)\right)x=\\ \nonumber
&&  \chi_{\boldsymbol{V}}(1_R\otimes z)x     -\pi_{A,\boldsymbol{V}}\left(\Delta_{(R\otimes_{\C}\fg,\beta,1)}\circ \Delta_{(R\otimes_{\C}\fg,\beta,1)}^{-1}\left(\mathbb{I}_R\otimes {\zeta}\right)(1_R\otimes z)\right)x=\\ \nonumber
&&    \chi_{\boldsymbol{V}}(1_R\otimes z)x   -\pi_{A,\boldsymbol{V}}\left(\Delta_{(R\otimes_{\C}\fg,\beta,1)}\circ (\mathbb{I}_R\otimes \Delta_{\beta}^{-1})\left(\mathbb{I}_R\otimes {\zeta}\right)(1_R\otimes z)\right)x =\\ \nonumber
&&    \chi_{\boldsymbol{V}}(1_R\otimes z)x     -d\pi_{\widetilde{K},\boldsymbol{V}}( \Delta_{\beta}^{-1}(\zeta(z)))x =\\ \nonumber
&&    \chi_{\boldsymbol{V}}(1_R\otimes z)x    -( \widehat{1\otimes(\mu+\rho_{\fk})})(\mathbb{I}_R\otimes \operatorname{HC}_{\ft}^{\mathfrak{k}})(1_R\otimes \Delta_{\beta}^{-1}(\zeta(z)))x=\\ \nonumber
&&    \chi_{\boldsymbol{V}}(1_R\otimes z)x   -( \widehat{1\otimes(\mu+\rho_{\fk})})(\mathbb{I}_R\otimes \operatorname{Res}\circ \operatorname{HC}_{\fh}^{\mathfrak{g}})(1_R\otimes z)x,
\end{eqnarray}
where in the last equality we used the abovementioned commutative diagram. 
Since $x\notin \operatorname{Im}  \pi_{A,\boldsymbol{V}}(D(R\otimes_{\C}\fg,\beta,1))$, for every $z\in \mathcal{Z}(\fg)$,
\begin{eqnarray}\nonumber
&&    \chi_{\boldsymbol{V}}(1_R\otimes z)=( \widehat{\mathbb{I}_R\otimes(\mu+\rho_{\fk})})(\mathbb{I}_R\otimes \operatorname{Res}\circ \operatorname{HC}_{\fh}^{\mathfrak{g}})(1_R\otimes z)
\end{eqnarray}
which is equivalent to 
\begin{eqnarray}\nonumber
&    \chi_{\boldsymbol{V}} &=( \widehat{\mathbb{I}_R\otimes(\mu+\rho_{\fk})})(\mathbb{I}_R\otimes \operatorname{Res} )(\mathbb{I}_R\otimes   \operatorname{HC}_{\fh}^{\mathfrak{g}}) \\ \nonumber
&&=( \widehat{\mathbb{I}_R\otimes(\mu+\rho_{\fk})^{\ft}_{\fh}}) (\mathbb{I}_R\otimes   \operatorname{HC}_{\fh}^{\mathfrak{g}}).
\end{eqnarray}
%^{\ft}_{\fh}
%Note that %$\widehat{\boldsymbol{\lambda}}$ is a morphism of $R$-algebras from $S(R\otimes_{\C}\mathfrak{h})$ to $R$ while  
%$( \widehat{\mathbb{I}_R\otimes(\mu+\rho_{\fk})})$ is the  morphism of $R$-algebras from $S(R\otimes_{\C}\mathfrak{t})$ to $R$. The  morphism $( \widehat{\mathbb{I}_R\otimes(\mu+\rho_{\fk})})(\mathbb{I}_R\otimes \operatorname{Res} )$ from $S(R\otimes_{\C}\mathfrak{h})$ to $R$ vanishes on $S(R\otimes_{\C}\mathfrak{a})$ and coincides with $( \widehat{\mathbb{I}_R\otimes(\mu+\rho_{\fk})})$ on $S(R\otimes_{\C}\mathfrak{t})$. Keeping in mind that $S(R\otimes_{\C}\mathfrak{h})=S(R\otimes_{\C}\mathfrak{t})S(R\otimes_{\C}\mathfrak{a})$  we see that the morphism  $( \widehat{\mathbb{I}_R\otimes(\mu+\rho_{\fk})})(\mathbb{I}_R\otimes \operatorname{Res} )$ coincides with algebra homomorphism from $S(R\otimes_{\C}\mathfrak{h})$ to $R$ that is obtained by the $R$-linear functional on $R\otimes_{\C}\fh$ that is zero on $R\otimes_{\C}\mathfrak{a}$ and coincide with  $\mathbb{I}_R\otimes(\mu+\rho_{\fk})$ on $R\otimes_{\C}\mathfrak{t}$. By abuse of notation this morphism is also denoted by $( \widehat{\mathbb{I}_R\otimes(\mu+\rho_{\fk})})$
%hence on the image $R\otimes_{\C}S(\fh)^{W(\fg,\fh)}$ of $(\mathbb{I}_R\otimes   \operatorname{HC}_{\fh}^{\mathfrak{g}})$ we have
%\begin{eqnarray}\nonumber
%&&   \widehat{\boldsymbol{\lambda}}  =( \widehat{\mathbb{I}_R\otimes(\mu+\rho_{\fk})})(\mathbb{I}_R\otimes \operatorname{Res} ) 
%\end{eqnarray}
Hence $\mathbb{I}_R\otimes (\mu+ \rho_\mathfrak{k})^{\ft}_{\fh}$ is an infinitesimal character of $\boldsymbol{V}$ with respect to $R\otimes_{\C}\fh$. 

 For the second statement, if  
$\boldsymbol{\lambda}\in (R\otimes_{\C}\mathfrak{h})^*$ is also an infinitesimal character of $\boldsymbol{V}$ with respect to $R\otimes_{\C}\fh$
we must have 

\begin{eqnarray}\nonumber
&&   \widehat{\boldsymbol{\lambda}}(\mathbb{I}_R\otimes \operatorname{HC}_{\fh}^{\mathfrak{g}}) =( \widehat{\mathbb{I}_R\otimes(\mu+\rho_{\fk})^{\ft}_{\fh}}) (\mathbb{I}_R\otimes   \operatorname{HC}_{\fh}^{\mathfrak{g}})
\end{eqnarray}
and 
$\boldsymbol{\lambda}$  must be  $W(\fg,\fh)$-conjugate to $\mathbb{I}_R\otimes (\mu+ \rho_\mathfrak{k})^{\ft}_{\fh}$. 
\end{proof}

\subsection{Vogan's conjecture for $(\g_{(n)},K)$}\label{52}
In this subsection we formulate and prove Vogan's conjecture for the families $(\g_{(n)},K)$. 

\begin{theorem*}
    Let $\boldsymbol{V}$  be a generically  irreducible and admissible  $(\g_{(n)},K)$-module 
    with non-zero Dirac cohomology. Then 
$\boldsymbol{V}$  has   an infinitesimal character $\boldsymbol{\lambda}\in \boldsymbol{\mathfrak{h}}_{(n)}^*$ with respect to the fundamental Cartan subfamily $\boldsymbol{\fh}_{(n)}$.
Moreover,  if $H_{D(\g_{(n)},\beta,t^n)}(\boldsymbol{V})$ contains a $\widetilde{K}$-type of highest weight $\mu\in\mathfrak{t}^*$, then $\boldsymbol{\lambda}$   is $W(\fg,\fh)$-conjugate to   $\mathbb{I}_R\otimes (\mu+ \rho_\mathfrak{k})^{\ft}_{\fh}\in  \boldsymbol{\mathfrak{h}}_{(n)}^*$.
\end{theorem*}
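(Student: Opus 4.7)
The plan is to reduce to the constant family case of Section \ref{51} by localizing at $t$, and then to descend the resulting infinitesimal character from $(R_t \otimes_{\mathbb{C}} \fh)^*$ to $\h_{(n)}^*$.

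Set $R_t := \mathbb{C}[t,t^{-1}]$. Since $t^n$ becomes invertible in $R_t$, the inclusion $\g_{(n)} \subseteq R \otimes_{\mathbb{C}} \fg$ will localize to an isomorphism $\g_{(n)} \otimes_R R_t \cong R_t \otimes_{\mathbb{C}} \fg$, and the embedding $\iota_{\g_{(n)}} T_{t^n}^{-1}$ of Section \ref{The algebra A} will localize to an isomorphism $A(\g_{(n)}, \beta, t^n) \otimes_R R_t \cong A(R_t \otimes_{\mathbb{C}} \fg, \beta, 1)$, under which, by the Lemma in Section \ref{DO}, $D(\g_{(n)}, \beta, t^n)$ corresponds to $t^n \cdot D(R_t \otimes_{\mathbb{C}} \fg, \beta, 1)$. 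Admissibility then implies that each $\widetilde{K}$-isotypic component of $\boldsymbol{V} \otimes_R \boldsymbol{S}$ is a free $R$-module of finite rank on which $D(\g_{(n)},\beta,t^n)$ acts as a matrix, so flatness of $R \to R_t$ yields $H_{D(R_t\otimes_{\mathbb{C}}\fg,\beta,1)}(\boldsymbol{V}_t) \cong H_{D(\g_{(n)},\beta,t^n)}(\boldsymbol{V}) \otimes_R R_t$ as $\widetilde{K}$-equivariant $R_t$-modules, where $\boldsymbol{V}_t := \boldsymbol{V} \otimes_R R_t$. Provided the $\mu$-isotypic component of $H_{D(\g_{(n)},\beta,t^n)}(\boldsymbol{V})$ has a nonzero torsion-free part, applying the Theorem of Section \ref{51} to $\boldsymbol{V}_t$ delivers an infinitesimal character for $\boldsymbol{V}_t$ that, after Weyl conjugation, equals $\mathbb{I}_{R_t} \otimes (\mu + \rho_{\fk})^{\ft}_{\fh}$.

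To descend, note that $\boldsymbol{V}$ is quasi-simple by Dixmier's Lemma \cite[Lemma 4.1.1]{eyalbern}, yielding a central character $\chi_{\boldsymbol{V}} : \mathcal{Z}(\g_{(n)}) \to R$. I would then set $\boldsymbol{\lambda} \in \h_{(n)}^*$ to be the restriction of $\mathbb{I}_R \otimes (\mu + \rho_{\fk})^{\ft}_{\fh} \in (R \otimes \fh)^*$ to $\h_{(n)} \subseteq R \otimes \fh$. By the Proposition in Section \ref{sec3.4}, $\boldsymbol{\operatorname{HC}_{\fh_{(n)}}^{\mathfrak{g}_{(n)}}}$ is the restriction of $\boldsymbol{\operatorname{HC}_{\fh}^{\mathfrak{g}}}$ to $\mathcal{Z}(\g_{(n)})$, with image in $\operatorname{S}(\h_{(n)})^{W(\fg,\fh)}$; moreover, the unital $R$-algebra homomorphisms $\widehat{\boldsymbol{\lambda}}$ and $\widehat{\mathbb{I}_R \otimes (\mu + \rho_\fk)^{\ft}_{\fh}}$ agree on generators in $\h_{(n)}$, hence on $\operatorname{S}(\h_{(n)})$, so
\[\widehat{\boldsymbol{\lambda}}(\boldsymbol{\operatorname{HC}_{\fh_{(n)}}^{\mathfrak{g}_{(n)}}}(z)) = \widehat{\mathbb{I}_R \otimes (\mu + \rho_{\fk})^{\ft}_{\fh}}(\boldsymbol{\operatorname{HC}_{\fh}^{\mathfrak{g}}}(z)) \in R\]
for every $z \in \mathcal{Z}(\g_{(n)})$. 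Base changing to $R_t$, the right-hand side equals $\chi_{\boldsymbol{V}_t}(1 \otimes z) = \chi_{\boldsymbol{V}}(z) \otimes 1$, so by injectivity of $R \hookrightarrow R_t$ one gets $\chi_{\boldsymbol{V}}(z) = \widehat{\boldsymbol{\lambda}}(\boldsymbol{\operatorname{HC}_{\fh_{(n)}}^{\mathfrak{g}_{(n)}}}(z))$, establishing that $\boldsymbol{\lambda}$ is an infinitesimal character of $\boldsymbol{V}$ with respect to $\h_{(n)}$ coinciding with the restriction of $\mathbb{I}_R \otimes (\mu + \rho_\fk)^{\ft}_{\fh}$.

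The hardest step will be guaranteeing that $H_{D(\g_{(n)},\beta,t^n)}(\boldsymbol{V})_\mu$ survives localization at $t$; if this $R$-module is pure torsion, the argument above collapses. The fallback I would attempt is to adapt the Huang-Pand\v{z}i\'c identity $z \otimes 1 = \zeta(z) + D a + b D$ of \cite[Thm.~3.2.7, 3.2.8]{pandzic} directly inside $A(\g_{(n)},\beta,t^n)$: extending it by $R$-linearity to $\mathcal{Z}(R \otimes \fg) \supset \mathcal{Z}(\g_{(n)})$ and multiplying through by an appropriate power of $t$ so that the auxiliary terms $a,b$ land in $A(\g_{(n)},\beta,t^n)$ via the embedding of Section \ref{The algebra A}, then applying the resulting identity to a highest weight lift in $(\boldsymbol{V} \otimes_R \boldsymbol{S})_\mu$ and extracting the scalar relation for $\chi_{\boldsymbol{V}}$. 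Admissibility will be essential throughout, reducing the analysis to finite rank free $R$-modules equipped with an $R$-linear operator.
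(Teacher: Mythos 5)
Your approach mirrors the paper's proof: both localize at $t$ (passing to $R_0=\mathbb{C}[t,t^{-1}]$), identify the localized family $\iota^R_{R_0}(\g_{(n)})$ with the constant family $R_0\otimes_{\mathbb{C}}\fg$ via multiplication by $t^{-n}$ on $\boldsymbol{\fp}_{(n)}$, show that localization commutes with forming Dirac cohomology, apply the constant-family version of the theorem from Section \ref{51}, and finally descend the resulting infinitesimal character to $\boldsymbol{\mathfrak{h}}_{(n)}^*$. Your descent step is in fact a bit cleaner than the paper's: the paper verifies the descent by explicitly tracking elements of the form $t^m\otimes\xi$ through the isomorphism $\psi^{-1}$, whereas you use the facts that $\boldsymbol{\operatorname{HC}_{\fh_{(n)}}^{\fg_{(n)}}}$ is the restriction of $\boldsymbol{\operatorname{HC}_{\fh}^{\fg}}$ to $\mathcal{Z}(\g_{(n)})$ and that $\widehat{\boldsymbol{\lambda}}$ and $\widehat{\mathbb{I}_R\otimes(\mu+\rho_\fk)^{\ft}_{\fh}}$ agree on $\operatorname{S}(\h_{(n)})$, together with injectivity of $R\hookrightarrow R_0$, to arrive at the same relation more directly.

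Your caveat about torsion touches a point the paper does not explicitly settle. The paper's Proposition in Section 5.2.8 produces the $\widetilde{K}$-equivariant isomorphism $\iota^R_{R_0}\bigl(H_{D(\g_{(n)},\beta,t^n)}(\boldsymbol{V})\bigr)\cong H_{q^R_{R_0}D}\bigl(\iota^R_{R_0}(\boldsymbol{V})\bigr)$, and the proof of the theorem then immediately asserts that the right-hand side contains the $\widetilde{K}$-type $\mu$. That inference requires that $H_{D(\g_{(n)},\beta,t^n)}(\boldsymbol{V})_\mu$ is not a pure $t$-torsion $R$-module, and no torsion-freeness of the Dirac cohomology is established in the paper; so you have identified a genuine subtlety shared by both arguments rather than a defect specific to yours. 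Your fallback plan -- rescaling the Huang--Pand\v{z}i\'c identity $z\otimes 1=\zeta(z)+Da+bD$ by a suitable power of $t$ so that the auxiliary terms land in $A(\g_{(n)},\beta,t^n)$ via the embedding of Section \ref{The algebra A}, applying the rescaled identity to a highest-weight lift in $(\boldsymbol{V}\otimes_R\boldsymbol{S})_\mu$, and cancelling the $t$-powers using freeness of $\boldsymbol{V}\otimes_R\boldsymbol{S}$ -- is a credible way to close that gap (and would bypass localization altogether), although you leave it as a sketch and do not carry it out.
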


\begin{remark*}
   Note that the last theorem, implies that a generically  irreducible $(\g_{(n)},K)$-module 
    with non-zero Dirac cohomology must have a constant infinitesimal character with respect to a fundamental Cartan subfamily. 
\end{remark*} 
 
The main idea in the proof of the last theorem is to restrict the relevant families to $\C^{\times}$ where they are isomorphic to constant families. Then to use Vogan's conjecture for constant families which was proven in the previous section. The theorem then follows by a continuity argument.

Before proving the theorem we shall discuss  restriction and relevant needed results. Restriction in our algebraic context is realized via localization.  

\subsubsection{Preliminaries}
From now on, we let  $R=\C[t]$. 
Recall that as a direct sum of $K$-modules over $R$,
\[\g_{(n)}=\k\oplus \boldsymbol{\fp}_{(n)},\] 
where $\k=R\otimes_{\C}\fk$, $\boldsymbol{\fp}_{(n)}=I_n\otimes_{\C}\fp$, and  $I_n:=\langle t^n\rangle$. We fix once and for all the generator $t^n$ of the ideal $I_n$.

We let $R_0=\mathcal{S}^{-1}R=\C[t,t^{-1}]$ be the localization of $R$ with respect to the multiplicatively closed set $\mathcal{S}:=\{1,t,t^2,...\}$. We have a canonical inclusion  of rings $e_{R_0}^R:R\longrightarrow R_0$. Below  we shall not write the inclusion morphism explicitly. 
We shall repeatedly use the localization functor $\iota^{R}_{R_0}: \prescript{}{R}{\mathcal{M}}\longrightarrow \prescript{}{R_0}{\mathcal{M}}$  from the category of left $R$-modules to the category of left $R_0$-modules given by $\iota^{R}_{R_0}(M)=R_0\otimes_{R}M$. This is an exact functor that geometrically amounts to restriction from $\C$ to $\C^{\times}$. We denote  by $q^R_{R_0}$ the canonical map from $M$ to $\iota^{R}_{R_0}(M)$ sending $m$ to $1_{R_0}\otimes m$.

We shall freely use well-known properties of localizations of free modules over a principal ideal domain such as  
\begin{itemize}
    \item $\forall M,N \in \prescript{}{R}{\mathcal{M}}$,  $\forall \varphi \in \operatorname{Hom}_R(M,N)$, $q^R_{R_0}\circ \varphi  =\iota^{R}_{R_0}(\varphi)\circ q^R_{R_0}$. 
    \item $\forall M,N \in \prescript{}{R}{\mathcal{M}}$, $\iota^{R}_{R_0}(M\otimes_R N)\cong \iota^{R}_{R_0}(M)\otimes_{R_0} \iota^{R}_{R_0}(N)$.
        \item $\forall M,N \in \prescript{}{R}{\mathcal{M}}$, $\iota^{R}_{R_0}(\operatorname{Hom}_R(M,N))\cong \operatorname{Hom}_{R_0}(\iota^{R}_{R_0}(M),\iota^{R}_{R_0}(N))$.
\end{itemize}

We shall also need the related  (scalar extension) functors $\iota^{\C}_{R_0}: \prescript{}{\C}{\mathcal{M}}\longrightarrow \prescript{}{R_0}{\mathcal{M}}$  and $\iota^{\C}_{R}: \prescript{}{\C}{\mathcal{M}}\longrightarrow \prescript{}{R}{\mathcal{M}}$. %We shall freely use     compatibility of  our functors  with tensor products,  e.g.,  the fact that
%for every $M,N\in \prescript{}{\C}{\mathcal{M}}$, 
%$\iota^{\C}_{R_0}(M\otimes_{\C}N)\cong \iota^{\C}_{R_0}(M)\otimes_{\R_0}\iota^{\C}_{R_0}(N)$, without explicitly mentioning the isomorphism. 

For any $M\in \prescript{}{\C}{\mathcal{M}}$, we let $m_{0}:\iota^{R}_{R_0}\iota^{\C}_{R}(M)\longrightarrow \iota^{\C}_{R_0}(M)$ be the isomorphism of $R_0$-modules  given by 
$m_0(x\otimes y\otimes m)=x y \otimes m $ for every $x\in R_0$, $y\in R$, and $m\in M$. 
%For $M\in \prescript{}{\C}{\mathcal{M}}$,
%we denote the canonical map from $M$ to $\iota^{\C}_{R}(M)$ by $q_{R}^{\C}$. Similarly the canonical map from $M$ to $\iota^{\C}_{R_0}(M)$ shall be denoted by $q_{R_0}^{\C}$.
\subsubsection{The localized Harish-Chandra pairs}\label{lhcp}

For every $n\in \N_0$, we let $\iota_n:\g_{(n)}\longrightarrow \iota^{\C}_{R}(\fg)=\g_{(0)}$ be the inclusion map (of Lie algebras over $R$). Note that the corresponding map between the universal enveloping algebras was denoted by $\iota_{\g}$ in Section \ref{The algebra A}.
Clearly the diagram
\[\xymatrix{
\g_{(n)} \ar[d]^{q^R_{R_0}}\ar[rr]_{\iota_n} 
& &\iota^{\C}_{R}(\fg)\ar[d]^{q^R_{R_0}}  &\\
\iota_{R_0}^R(\g_{(n)})\ar[rr]^{\iota^R_{R_0}(\iota_n)}&&\iota_{R_0}^R(\iota^{\C}_{R}(\fg) )\ar[r]^{m_0}_{\cong}&\iota^{\C}_{R_0}(\fg)
 }\]
is commutative.
A similar  commutative diagram holds for the universal enveloping algebras.

We set $\psi:=m_0\circ \iota^R_{R_0}(\iota_n):\iota_{R_0}^R(\g_{(n)}) \longrightarrow \iota^{\C}_{R_0}(\fg)$, as a morphism of Lie algebras over $R_0$.
\begin{lemma*}
     The map $\psi$ is a $K$-equivariant  isomorphism of Lie algebras over $R_0$ and it induces an isomorphism of algebraic families of     Harish-Chandra pairs  from $(\iota^{R}_{R_0}(\g_{(n)}),K)$  to $(\iota^{\C}_{R_0}(\fg),K)$.
\end{lemma*}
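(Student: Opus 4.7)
The plan is to reduce the statement to a concrete identification of $\iota_{R_0}^R(\g_{(n)})$ with the constant family $\iota^{\C}_{R_0}(\fg) = R_0\otimes_{\C}\fg$, obtained by noticing that inverting $t$ renders the generator $t^n$ of $I_n$ invertible, so that the ``twisting'' defining $\g_{(n)}$ becomes trivial. All the stated structural compatibilities then drop out of naturality of localization.

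First, I would unpack the source. Since $\g_{(n)} = (\C[t]\otimes_{\C}\fk) \oplus (t^{n}\C[t]\otimes_{\C}\fp)$ is free as an $R$-module, exactness of $\iota^{R}_{R_0}$ gives
\[
\iota_{R_0}^R(\g_{(n)}) \;\cong\; \bigl(R_0\otimes_{\C}\fk\bigr) \oplus \bigl(R_0 \cdot t^{n}\otimes_{\C}\fp\bigr).
\]
Because $t^{n}$ is a unit of $R_0$, the second summand equals $R_0\otimes_{\C}\fp$, and tracing through the definitions of $\iota^R_{R_0}(\iota_n)$ and $m_0$ one sees that $\psi$ sends $1\otimes(1\otimes X)\mapsto 1\otimes X$ for $X\in\fk$ and $t^{-n}\otimes (t^{n}\otimes Y)\mapsto 1\otimes Y$ for $Y\in\fp$. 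This makes the bijectivity of $\psi$ manifest: injectivity follows because the two summands of $\iota_{R_0}^R(\g_{(n)})$ map into the two linearly independent summands of $\iota^{\C}_{R_0}(\fg) = (R_0\otimes\fk)\oplus(R_0\otimes\fp)$, and surjectivity holds because any $a\otimes Y\in R_0\otimes\fp$ is the image of $at^{-n}\otimes(t^{n}\otimes Y)$.

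Next I would verify that $\psi$ is a morphism of Lie algebras over $R_0$: this is automatic, since $\iota_n$ is a morphism of Lie algebras over $R$, $\iota^{R}_{R_0}$ is functorial on Lie algebras over $R$, and $m_0$ is itself an $R_0$-algebra isomorphism. $K$-equivariance of $\psi$ follows from $K$-equivariance of $\iota_n$ together with the fact that the $K$-action on both $\iota_{R_0}^R(\g_{(n)})$ and $\iota^{\C}_{R_0}(\fg)$ is obtained by $R_0$-linear extension of the action of $K$ on $\fg$, with which $\iota^R_{R_0}$ and $m_0$ are compatible.

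Finally, to promote $\psi$ to an isomorphism of algebraic families of Harish-Chandra pairs, I must check that it intertwines the equivariant embeddings of $\mathrm{Lie}(K)=\fk$. For $(\g_{(n)},K)$ the embedding is $j_n\colon X\mapsto 1\otimes X\in \C[t]\otimes\fk \subseteq \g_{(n)}$; localizing gives $X\mapsto 1\otimes(1\otimes X)\in \iota_{R_0}^R(\g_{(n)})$, and by the formula above $\psi$ sends this to $1\otimes X\in R_0\otimes\fg$, which is precisely the embedding defining $(\iota^{\C}_{R_0}(\fg),K)$. The compatibility of the $\mathrm{Lie}(K)$-actions on each side with the $K$-actions is inherited from that for $(\g_{(n)},K)$ and $(\fg,K)$, and is preserved by $\psi$. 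I do not anticipate any serious obstacle here; the only subtlety worth care is keeping track of the canonical isomorphism $m_0$ when identifying $R_0\otimes_R(R\otimes_{\C}\fg)$ with $R_0\otimes_{\C}\fg$, which is what makes the $t^{n}$-twisting disappear upon localization.
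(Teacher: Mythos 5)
Your argument is correct and fills in precisely the kind of direct calculation the paper alludes to (the paper itself says only ``The proof is by direct calculation''). The key observation — that $t^n$ becomes a unit in $R_0=\C[t,t^{-1}]$, so the twisting by $I_n=\langle t^n\rangle$ disappears after localization — is the whole content of the statement, and your explicit description of $\psi$ on the two summands, together with functoriality of $\iota^R_{R_0}$ and the role of the canonical isomorphism $m_0$, gives a complete verification of all the required compatibilities.
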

The proof is by direct calculation.

 \subsubsection{The localized Harish-Chandra modules}\label{Ac}
Let $\boldsymbol{V}$ be   a $(\g_{(n)},K)$-module  with actions of $K$ and  $\g_{(n)}$ that are denoted by $\pi_{K,\boldsymbol{V}}:K\longrightarrow \operatorname{Aut}_{R}(\boldsymbol{V})$ and $\pi_{\g_{(n)},\boldsymbol{V}}:\boldsymbol{\mathfrak{g}}_{(n)} \longrightarrow \operatorname{End}_{R}(\boldsymbol{V})$  respectively. We shall denote the morphism $\g_{(n)}\otimes_R \boldsymbol{V}\longrightarrow \boldsymbol{V} $  associated  with $\pi_{\g_{(n)},\boldsymbol{V}}$ by $\widetilde{\pi}_{\g_{(n)},\boldsymbol{V}}$. That is, for every $X\in \g_{(n)}$ and $v\in \boldsymbol{V}$, we have
$\widetilde{\pi}_{\g_{(n)},\boldsymbol{V}}(X\otimes v)=\pi_{\g_{(n)},\boldsymbol{V}}(X)(v)$. We shall adopt this notation involving $\sim$  for all other relevant actions below.
 
Localization equips the $R_0$-module $\iota^{R}_{R_0}(\boldsymbol{V})$ with the structure of an $(\iota^{R}_{R_0}(\g_{(n)}),K)$-module  with actions of $K$, and $\iota^{R}_{R_0}(\g_{(n)})$ that we denote by 
$\pi_{K,\iota^{R}_{R_0}(\boldsymbol{V})}:K\longrightarrow \operatorname{Aut}_{R_0}(\iota^{R}_{R_0}(\boldsymbol{V}))$, and $\iota^{R}_{R_0}(\pi_{\g_{(n)},\boldsymbol{V}}):\iota^{R}_{R_0}(\g_{(n)}) \longrightarrow \operatorname{End}_{R_0}(\iota^{R}_{R_0}(\boldsymbol{V}))$. 
Using the isomorphism $\psi:\iota_{R_0}^R(\g_{(n)}) \longrightarrow \iota^{\C}_{R_0}(\fg)$ we equip 
$\iota^{R}_{R_0}(\boldsymbol{V})$ with the structure of an $(\iota^{\C}_{R_0}(\fg),K)$-module. All these morphisms fit into the commutative
 diagram below.
  \[\xymatrix{
&& \g_{(n)}\otimes_R \boldsymbol{V}\ar[ld]_{q^{R}_{R_0}\otimes q^{R}_{R_0}} \ar[r]^{\hspace{6mm}\widetilde{\pi}_{\g_{(n)},\boldsymbol{V}}}\ar[d]^{q^{R}_{R_0}}
&\boldsymbol{V}\ar[d]^{q^{R}_{R_0}} \\
\iota^{\C}_{R_0}(\fg)\otimes_{R_0}\iota^{R}_{R_0}(\boldsymbol{V} )\ar[r]^{\hspace{-3mm} \psi^{-1}\otimes \mathbb{I}}_{\cong}    &\iota^{R}_{R_0}(\g_{(n)})\otimes_{R_0}\iota^{R}_{R_0}(\boldsymbol{V} ) \ar[r]^{\cong}&\iota^{R}_{R_0}(\g_{(n)}\otimes_{R} \boldsymbol{V} )  \ar[r]^{\hspace{8mm}\iota^{R}_{R_0}(\widetilde{\pi}_{\g_{(n)},\boldsymbol{V}})} &\iota^{R}_{R_0}(\boldsymbol{V} ) }\]

Let $\fh=\ft\oplus \mathfrak{a}$ be a fundamental Cartan subalgebra of $\fg$ with $\ft=\fh\cap \fk$ and $\mathfrak{a}=\fh \cap \fp$. Let $\boldsymbol{\fh}_{(n)}=\t\oplus \boldsymbol{\mathfrak{a}}_{(n)}$ be the corresponding  fundamental Cartan subfamily of $\g_{(n)}$, and let $\iota^{R}_{R_0}(\boldsymbol{\fh}_{(n)})=\iota^{R}_{R_0}(\t)\oplus \iota^{R}_{R_0}(\boldsymbol{\mathfrak{a}}_{(n)})$ be the corresponding  fundamental Cartan subfamily of $\iota^{R}_{R_0}(\g_{(n)})$. The image of $\iota^{R}_{R_0}(\boldsymbol{\fh}_{(n)})$ under $\psi$ is the Cartan subfamily $\iota_{R_0}^{\C}(\fh)$ of the constant family  $\iota_{R_0}^{\C}(\fg)$.

%Since $\boldsymbol{\mathfrak{h}}_{(n)}$ is a free $R$-module (and in particular finitely presented $R$-module) localization commutes with taking the dual;
%$\iota^{R}_{R_0}(\boldsymbol{\mathfrak{h}}_{(n)}^*)\cong \iota^{R}_{R_0}(\boldsymbol{\mathfrak{h}}_{(n)})^*$.

\begin{lemma*}
    Let $\boldsymbol{V}$  be an algebraic family of  $(\g_{(n)},K)$-modules.
    \begin{enumerate}
     \item If $\boldsymbol{V}$ is generically irreducible then so is $\iota^{R}_{R_0}(\boldsymbol{V})$.
        \item If $\boldsymbol{V}$ is quasi-simple with central infinitesimal character $\chi_{\g_{(n)},\boldsymbol{V}}:\mathcal{Z}(\g_{(n)})\longrightarrow R$, then $\iota^{R}_{R_0}(\boldsymbol{V})$ is a quasi-simple $(\iota^{\C}_{R_0}(\fg),K)$-module  with central infinitesimal character $\iota^{R}_{R_0}(\chi_{\g_{(n)},\boldsymbol{V}})\circ \psi^{-1}:\iota^{\C}_{R_0}(\mathcal{Z}(\fg))\longrightarrow R_0\otimes_{R}R\cong R_0$. 
        \item  If $\boldsymbol{V}$ has 
    an  infinitesimal character $\boldsymbol{\lambda}\in \boldsymbol{\mathfrak{h}}_{(n)}^*$  with respect to $\boldsymbol{\mathfrak{h}}_{(n)}$, then $\iota^{R}_{R_0}(\boldsymbol{V})$ has an infinitesimal character $  \iota^R_{R_0}(\boldsymbol{\lambda})\circ   \psi^{-1} \in \iota_{R_0}^{\C}(\fh^*)\cong \iota_{R_0}^{\C}(\fh)^* $ with respect to $\iota_{R_0}^{\C}(\fh)$.
    \item If $\boldsymbol{V}$ is admissible then so is $\iota^{R}_{R_0}(\boldsymbol{V})$. 
    \end{enumerate}
\end{lemma*}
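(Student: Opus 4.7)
The overall strategy is to treat the four claims in order, verifying each by direct unwinding of the definitions and exploiting the exactness of $\iota^R_{R_0}$ together with the algebra isomorphism $\psi$ from the preceding lemma. The underlying inputs---that localization preserves tensor products, $\operatorname{Hom}$'s, direct sums, and the fiber at a point $z\in \C^{\times}$---are all recalled or implicit in the excerpt, so the work consists mostly of careful bookkeeping. The one genuinely delicate claim is (3), where the Harish-Chandra homomorphism must be tracked through $\psi$.

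For claim (1), for every $z \in \C^{\times}$ the maximal ideal $\langle t-z\rangle$ of $R$ is disjoint from $\mathcal{S}=\{1,t,t^2,\dots\}$, so the fiber $\iota^R_{R_0}(\boldsymbol{V})|_z$ is canonically identified with $\boldsymbol{V}|_z$ as a $(\g_{(n)}|_z,K)$-module. The set of $z\in \C$ at which $\boldsymbol{V}|_z$ fails to be irreducible is countable by hypothesis; its intersection with $\C^{\times}$ remains countable, so generic irreducibility is inherited. For claim (2), I would apply $\iota^R_{R_0}$ to the identity $\pi_{\g_{(n)},\boldsymbol{V}}(\xi)v=\chi_{\g_{(n)},\boldsymbol{V}}(\xi)\,v$ for $\xi\in\mathcal{Z}(\g_{(n)})$, and combine the canonical identification $\iota^R_{R_0}(\mathcal{Z}(\g_{(n)}))=\mathcal{Z}(\iota^R_{R_0}(\g_{(n)}))$ (a consequence of flat base change, together with the flatness of $R_0$ over $R$) with the induced isomorphism $\mathcal{Z}(\iota^R_{R_0}(\g_{(n)}))\xrightarrow{\sim}\mathcal{Z}(\iota^{\C}_{R_0}(\fg))$ coming from $\psi$. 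The resulting central character on $\iota^R_{R_0}(\boldsymbol{V})$ is exactly $\iota^R_{R_0}(\chi_{\g_{(n)},\boldsymbol{V}})\circ\psi^{-1}$.

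Claim (3) is the main point. Starting from $\chi_{\boldsymbol{V}}=\widehat{\boldsymbol{\lambda}}\circ \boldsymbol{\operatorname{HC}}^{\g_{(n)}}_{\boldsymbol{\fh}_{(n)}}$ and applying $\iota^R_{R_0}$, I would first verify the compatibility
$$\iota^R_{R_0}\bigl(\boldsymbol{\operatorname{HC}}^{\g_{(n)}}_{\boldsymbol{\fh}_{(n)}}\bigr)=\iota^{\C}_{R_0}\bigl(\operatorname{HC}^{\fg}_{\fh}\bigr)\circ\psi,$$
under the identifications $\iota^R_{R_0}(S(\boldsymbol{\fh}_{(n)}))\cong \iota^{\C}_{R_0}(S(\fh))$ and $\iota^R_{R_0}(\mathcal{Z}(\g_{(n)}))\cong \iota^{\C}_{R_0}(\mathcal{Z}(\fg))$. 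This reduces, via the proposition in Section \ref{sec3.4}, to the observation that $\iota^R_{R_0}$ sends the triangular decomposition $\g_{(n)}=\boldsymbol{\mathfrak{n}}^-_{(n)}\oplus\boldsymbol{\mathfrak{h}}_{(n)}\oplus\boldsymbol{\mathfrak{n}}^+_{(n)}$ onto the triangular decomposition of $\iota^{\C}_{R_0}(\fg)$, and that under $\psi$ the projections $P_{\boldsymbol{\mathfrak{h}}_{(n)}}$ match. One then unwinds that $\iota^R_{R_0}(\widehat{\boldsymbol{\lambda}})=\widehat{\mu}$ for $\mu:=\iota^R_{R_0}(\boldsymbol{\lambda})\circ\psi^{-1}\in \iota^{\C}_{R_0}(\fh)^*$, yielding the stated infinitesimal character. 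The main obstacle I anticipate is simply keeping the several identifications straight, since the three isomorphisms $\psi$, the canonical one between localized tensor products, and the matching of Cartan subfamilies all intervene simultaneously.

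For claim (4), writing $\boldsymbol{V}=\bigoplus_{\tau\in\widehat{K}}\boldsymbol{V}_{\tau}$ with each $\boldsymbol{V}_{\tau}$ free of finite rank over $R$, I would invoke exactness of $\iota^R_{R_0}$ and its commutation with arbitrary direct sums to obtain $\iota^R_{R_0}(\boldsymbol{V})=\bigoplus_{\tau}\iota^R_{R_0}(\boldsymbol{V}_{\tau})$, with each summand free of the same rank over $R_0$. Since the $K$-action on $\boldsymbol{V}_\tau$ localizes to the $\tau$-isotypic action on $\iota^R_{R_0}(\boldsymbol{V}_\tau)$, this is precisely the $K$-isotypic decomposition of $\iota^R_{R_0}(\boldsymbol{V})$, establishing admissibility.
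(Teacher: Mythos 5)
Your proof is correct and follows essentially the same approach as the paper: localization is exact and commutes with fibers, tensor products, Homs, centers of enveloping algebras, and $K$-isotypic decomposition, and the isomorphism $\psi$ transports the Harish-Chandra data. The only presentational difference is in claim (3), where you extract the intertwining of the two Harish-Chandra homomorphisms by $\psi$ as a standalone compatibility statement before unwinding the $\widehat{\cdot}$ construction, whereas the paper establishes the same identity inline by an element-level computation that chooses $m$ large enough so that $t^m\otimes\xi\in\mathcal{Z}(\g_{(n)})$ and expands $\psi^{-1}(f\otimes\xi)=t^{-m}f\otimes t^m\otimes\xi$; both routes rest on the same observation (via the proposition in Section \ref{sec3.4}) that $P_{\boldsymbol{\fh}_{(n)}}$ is the restriction of $P_{\boldsymbol{\fh}}$ and the $\rho$-shift is constant.
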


    \begin{proof}
For any $z\in \C^{\times}=\operatorname{Spec}_m(R_0)$,
\begin{eqnarray}\nonumber
    && \iota^{R}_{R_0}(\g_{(n)})|_{z}\cong \frac{\C[t,t^{-1}]}{(t-z)\C[t,t^{-1}] } \otimes_{\C[t]}  \g_{(n)}\cong  \frac{\C[t]}{(t-z)\C[t] } \otimes_{\C[t]}  \g_{(n)}\cong \g_{(n)}|_z\\ \nonumber
    && \iota^{R}_{R_0}(\boldsymbol{V})|_{z}\cong \frac{\C[t,t^{-1}]}{(t-z)\C[t,t^{-1}] }\otimes_{\C[t]} \boldsymbol{V}\cong    \frac{\C[t]}{(t-z)\C[t]}\otimes_{\C[t]}  \boldsymbol{V}= \boldsymbol{V}|_{z}
\end{eqnarray}
Since for almost all $z$,  $\boldsymbol{V}|_{z}$  is an irreducible $(\g_{(n)}|_z,K)$-module,   then for almost all $z$, $\iota^{R}_{R_0}(\boldsymbol{V})|_{z}$ is an irreducible $( \iota^{R}_{R_0}(\g_{(n)})|_{z},K)$-module and hence also an irreducible $(\iota^{\C}_{R_0}(\fg)|_z,K)$-module. 

Claim 2. about the central infinitesimal character follows from the last commutative diagram above.

 For Claim 3. it suffices to calculate $\iota^{R}_{R_0}({\pi}_{\g_{(n)},\boldsymbol{V}})\psi^{-1}(f\otimes\xi))(g\otimes v)$
for any  $f\otimes \xi \in \iota^{\C}_{R_0}(\mathcal{Z}(\fg))=\mathcal{Z}(\iota^{\C}_{R_0}(\fg))$ and $g\otimes v\in \iota^{R}_{R_0}(\boldsymbol{V})$.
 We first note that for any $m\in \mathbb{N}$ large enough  such that $t^m\otimes \xi \in \g_{(n)}$, we have $\psi^{-1}(f\otimes \xi)=t^{-m}f\otimes t^m \otimes \xi$ and hence
\begin{eqnarray}\nonumber
&& \iota^{R}_{R_0}({\pi}_{\g_{(n)},\boldsymbol{V}})\psi^{-1}(f\otimes\xi))(g\otimes v)= \iota^{R}_{R_0}({\pi}_{\g_{(n)},\boldsymbol{V}})( t^{-m}f\otimes t^m \otimes \xi)(g\otimes v)=\\ \nonumber%%%%%
   && t^{-m}fg\otimes \left({\pi}_{\g_{(n)},\boldsymbol{V}}  (t^m \otimes\xi)(v)\right)=t^{-m}fg\otimes \left(\left(\widehat{\boldsymbol{\lambda}}\circ \boldsymbol{\operatorname{HC}_{\fh_{(n)}}^{\mathfrak{g}_{(n)}}}(t^m\otimes \xi)\right)v\right)=\\ \nonumber %%%%%%%%%%%%%%%%%%%
&&\left(\iota^R_{R_0}(\widehat{\boldsymbol{\lambda}})\left(t^{-m}f\otimes  \boldsymbol{\operatorname{HC}_{\fh_{(n)}}^{\mathfrak{g}_{(n)}}}(t^m\otimes \xi) \right) \right)(g\otimes v)=\\ \nonumber
    &&  \left(\iota^R_{R_0}(\widehat{\boldsymbol{\lambda}}) \iota^R_{R_0}(\boldsymbol{\operatorname{HC}_{\fh_{(n)}}^{\mathfrak{g}_{(n)}}})(t^{-m}f\otimes t^m\otimes \xi)\right)(g\otimes v)=\\ \nonumber %%%%% %%%%%
    &&  \left(\iota^R_{R_0}(\widehat{\boldsymbol{\lambda}}) \iota^R_{R_0}(\boldsymbol{\operatorname{HC}_{\fh_{(n)}}^{\mathfrak{g}_{(n)}}})\psi^{-1}(f\otimes \xi)\right)(g\otimes v)=\\ \nonumber %%%%% %%%%%
    &&\left(\widehat{\iota^R_{R_0}(\boldsymbol{\lambda}) \psi^{-1}} \circ \boldsymbol{\operatorname{HC}}^{\iota^{\C}_{R_0}(\fg)}_{\iota^{\C}_{R_0}(\fh)}(f\otimes\xi)\right)(g\otimes v).
\end{eqnarray}

Claim 4. follows from the following fact: For every irreducible representation $\tau$ of $K$, $\iota^{R}_{R_0}(\boldsymbol{V})_{\tau}\cong \iota^{R}_{R_0}(\boldsymbol{V}_{\tau})$, where the subscript $\tau$ stands for the $K$-isotypic component corresponding  to $\tau$.
 \end{proof}

\subsubsection{The localized Clifford algebras}
 Recall that $\beta$ is a symmetric bilinear form on $\fg$ satisfying our running  assumptions.   
 To simplify notation, we denote the form $\boldsymbol{\beta}^{t^n}_{\p_{(n)}}$ on $\p_{(n)}$ by $\boldsymbol{\beta}^n$. This is the orthogonalizable form on $\p_{(n)}$ satisfying $1_R\otimes \beta = t^{2n}\boldsymbol{\beta}^n $. 
 
In Section \ref{compform} we introduced   the $K$-equivariant isomorphism of quadratic spaces     $T_r:(\fp,\beta_{\fp})\longrightarrow (\fp_r,\boldsymbol{\beta}_{\p}^r|_{\fp_r})$ given by multiplication by $r$. From now on, for every $n\in \N_0$, we  shall use   $T_n$ as a shorthand notation for $T_{t^n}$.
 By abuse of notation, we shall also use $T_n$ for the isometry $(R\otimes_{\C}\fp,1_R\otimes \beta)\longrightarrow (\p_{(n)}, \boldsymbol{\beta}^n)$, and also for the corresponding isomorphism between the Clifford algebras. 
\begin{lemma*}
   The localized morphism $\iota_{R_0}^R(T_n)$ is a $K$-equivariant  isomorphism between the localized Clifford algebras, and moreover  
   \[\xymatrix{
Cl(\iota^{\C}_{R}(\fp),1_{R}\otimes \beta)\cong \iota^{\C}_{R}(Cl(\fp,\beta)) \ar[rr]_{T_n} \ar[d]^{q_{R_0}^R}
& & Cl(\p_{(n)}, \boldsymbol{\beta}^n))\ar[d]^{q_{R_0}^R}  \\
Cl(\iota^{\C}_{R_0}(\fp),1_{R_0}\otimes \beta)\cong \iota^{\C}_{R_0}(Cl(\fp,\beta))\ar[rr]^{\iota^R_{R_0}(T_n)}  && \iota^{R}_{R_0}(Cl(\p_{(n)}, \boldsymbol{\beta}^n))\cong 
Cl(\iota^{R}_{R_0}(\p_{(n)}),1_{R_0}\otimes \boldsymbol{\beta}^n)
 }\]
 is a commutative diagram.
\end{lemma*}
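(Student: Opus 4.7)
The plan is to deduce everything from three general facts: (i) $T_n$ is already an $R$-algebra isomorphism, (ii) the Clifford algebra construction commutes with flat base change, and (iii) localization $\iota^R_{R_0}$ is a functor that preserves isomorphisms and commutes with the canonical map $q^R_{R_0}$.

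First, recall from Section \ref{The algebra A} that $T_n : R\otimes_{\C} Cl(\fp,\beta) \longrightarrow Cl(\p_{(n)},\boldsymbol{\beta}^n)$ is an isomorphism of $R$-algebras which is $K$-equivariant (it is induced, via the universal property of Clifford algebras, by the $K$-equivariant isometry of quadratic $R$-modules $X \mapsto t^n\otimes X$, whose underlying $R$-module map is an isomorphism because $\langle t^n\rangle\cong R$ as $R$-modules). Applying the (exact) localization functor $\iota^R_{R_0}$ therefore yields an isomorphism of $R_0$-algebras $\iota^R_{R_0}(T_n) : \iota^R_{R_0}(R\otimes_{\C} Cl(\fp,\beta)) \longrightarrow \iota^R_{R_0}(Cl(\p_{(n)},\boldsymbol{\beta}^n))$ which is $K$-equivariant, since localization preserves the $K$-action computed term-by-term.

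Next I would identify both sides of this localized map with honest Clifford algebras. For the source, the composition $\iota^R_{R_0}(R\otimes_{\C} Cl(\fp,\beta)) \cong \iota^\C_{R_0}(Cl(\fp,\beta))$ is immediate from $m_0$, and the natural map $\iota^\C_{R_0}(Cl(\fp,\beta)) \cong Cl(\iota^\C_{R_0}(\fp),1_{R_0}\otimes \beta)$ follows from the universal property: the Clifford algebra is $T(\fp)/I_\beta$, and since $R_0$ is flat over $\C$ one has $R_0\otimes_\C T(\fp)\cong T(R_0\otimes_\C \fp)$ and $R_0\otimes_\C I_\beta$ maps isomorphically onto the ideal generated by the relations of $1_{R_0}\otimes \beta$. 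The same argument applied to the free $R$-module $(\p_{(n)},\boldsymbol{\beta}^n)$ and to the flat $R$-algebra $R_0$ gives $\iota^R_{R_0}(Cl(\p_{(n)},\boldsymbol{\beta}^n))\cong Cl(\iota^R_{R_0}(\p_{(n)}),1_{R_0}\otimes\boldsymbol{\beta}^n)$. Under these canonical identifications, $\iota^R_{R_0}(T_n)$ becomes exactly the map in the bottom row of the diagram.

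Finally, commutativity of the rectangle is a direct instance of the naturality property recalled at the start of Section \ref{Ac}: for every morphism $\varphi$ of $R$-modules one has $q^R_{R_0}\circ \varphi = \iota^R_{R_0}(\varphi)\circ q^R_{R_0}$. Applying this to $\varphi = T_n$ (viewed after the canonical identifications above) gives the stated commutative square, and the $K$-equivariance of all four corners and of both horizontal maps has already been noted.

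The main point to check carefully is step (ii), the compatibility of the Clifford algebra with localization; once that is in place the rest is formal. Since both Clifford algebras in question are built from free modules of finite rank with the same orthogonalizable form (up to scaling that is already absorbed in the definition of $T_n$), flatness of $R_0$ over $R$ and the explicit presentation $Cl(M,\boldsymbol{\beta})=T(M)/I_{\boldsymbol{\beta}}$ make this identification entirely routine.
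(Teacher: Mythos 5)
Your proposal is correct and takes essentially the same approach as the paper, which simply asserts that the result "immediately follows from properties of localization." You have made those properties explicit: naturality of $q_{R_0}^R$ with respect to $R$-module morphisms for the commutativity, and compatibility of Clifford-algebra formation with flat base change for the identifications.
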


The proof immediately follows from properties of localization.

\subsubsection{The localized spin module}\label{locspin}
As we saw in  Section \ref{482}, by  fixing a maximal isotropic subspace $\fp^-$ of $\fp$  we obtain  spin modules  $\bigwedge (\iota^{\C}_{R}(\fp^-))\cong \iota^{\C}_{R}(\bigwedge (\fp^-))$ of $Cl(\iota^{\C}_{R}(\fp),1_{R}\otimes \beta)\cong \iota^{\C}_{R}(Cl(\fp,\beta))$ and  $\bigwedge \p_{(n)}^-$ of $Cl(\p_{(n)},\boldsymbol{\beta}^n)$, and 
 a $\widetilde{K}$-equivariant  isomorphism $ T_{n,\fp^-}:\iota^{\C}_{R}(\bigwedge (\fp^-))
  \longrightarrow     \bigwedge \p_{(n)}^-$  such that 
  \[\xymatrix{
& Cl(\p_{(n)},\boldsymbol{\beta}^n)\otimes_R \bigwedge \p_{(n)}^- \ar[r]^{\hspace{12mm}a_{\boldsymbol{\beta} ^n}}
& \bigwedge \p_{(n)}^-  \\
&\iota^{\C}_{R}( Cl(\mathfrak{p},\beta))\otimes_{R} \iota^{\C}_{R}(\bigwedge \fp^-)\cong \iota^{\C}_{R}( Cl(\mathfrak{p},\beta)\otimes_{\C} \bigwedge \fp^-)\ar[u]^{T_{n}\otimes T_{n,\fp^-}}\ar[r]^{\hspace{36mm}\iota^{\C}_{R}(a_{\beta})} &\iota^{\C}_{R} (\bigwedge \fp^-)\ar[u]_{T_{n,\fp^-}}}\]
is a commutative diagram. Upon localization we obtain the commutative diagram 
\[\xymatrix{
& Cl(\p_{(n)},\boldsymbol{\beta}^n)\otimes_R \bigwedge \p_{(n)}^- \ar[r]^{\hspace{12mm}a_{\boldsymbol{\beta} ^n}}\ar[d]^{q^{R}_{R_0}}
& \bigwedge \p_{(n)}^-\ar[d]^{q^{R}_{R_0}}  \\
&  \iota^R_{R_0}(Cl(\p_{(n)},\boldsymbol{\beta}^n)\otimes_R \bigwedge \p_{(n)}^-) \ar[r]^{\hspace{14mm} \iota^R_{R_0}(a_{\boldsymbol{\beta} ^n})}
&  \iota^R_{R_0}(\bigwedge \p_{(n)}^-)  \\
&\iota^{\C}_{R_0}( Cl(\mathfrak{p},\beta))\otimes_{R_0} \iota^{\C}_{R_0}(\bigwedge \fp^-)\cong\iota^{\C}_{R_0}( Cl(\mathfrak{p},\beta)\otimes_{\C} \bigwedge \fp^-)\ar[u]^{ \iota^R_{R_0}(T_{n}\otimes T_{n,\fp^-})}\ar[r]^{\hspace{36mm}\iota^{\C}_{R_0}(a_{\beta})} &\iota^{\C}_{R_0} (\bigwedge \fp^-)\ar[u]_{ \iota^R_{R_0}(T_{n,\fp^-})}}\]
where $\iota^R_{R_0}(T_{n}\otimes T_{n,\fp^-})$ and $ \iota^R_{R_0}(T_{n,\fp^-})$ are $\widetilde{K}$-equivariant isomorphisms. 
Note that we used  the fact that $\iota^{R}_{R_0}\iota^{\C}_{R}(M)\cong\iota^{\C}_{R_0}(M)$, for every $M\in \prescript{}{\C}{\mathcal{M}}$

\subsubsection{The localized generalized pair}\label{526}
We set
\[A(\iota^{R}_{R_0} (\g_{(n)})):=\mathcal{U}(\iota^{R}_{R_0}(\g_{(n)}))\otimes_{R_0}Cl(\iota^{R}_{R_0}(\p_{(n)}),1_{R_0}\otimes \boldsymbol{\beta}^n). \]
Clearly $A(\iota^{R}_{R_0} (\g_{(n)}))\cong  \iota^{R}_{R_0} (A(\g_{(n)},\beta,t^n))$ and 
$A(\iota^{\C}_{R_0}(\fg),\beta,1)\cong \iota^{\C}_{R_0}( A(\fg,\beta))$.

%\[\xymatrix{
%A(\g_{(n)},\beta,t^n)\ar[rrd]^{\iota_{n}T_n^{-1}}\ar[rr]^{\hspace{-20mm} \iota_{n}\otimes T_n^{-1}}&& \left(R\otimes_{\C}\mathcal{U}(\fg)\right) \otimes_R \left(R\otimes_{\C} {Cl}(\fp,{ \beta}_{\fp})  \right)\ar[d]\\
% &&R\otimes_{\C} A(\fg,\beta,1)}\]

\begin{corollary*}
The map $\iota_{R_0}^{R} (\iota_nT_n^{-1})$ is a $K$-equivariant isomorphism of $R_0$-algebras such that the following diagram is commutative 
\[\xymatrix{
A(\g_{(n)},\beta,t^n)\ar[d]^{q^{R}_{R_0}}\ar[rr]^{\iota_{n}T_n^{-1}}&&R\otimes_{\C} A(\fg,\beta,1)  
    \ar[d]^{q^{R}_{R_0}}\\
 \iota^R_{R_0}(A(\g_{(n)},\beta,t^n))\ar[rr]^{\iota_{R_0}^{R} (\iota_nT_n^{-1})}&& R_0\otimes_{\C} A(\fg,\beta,1) }\]
 and in particular $q^{R}_{R_0}((\iota_{n}T_n^{-1})D(\g_{(n)},\beta,t^n))=t^n\otimes D_{\fg,\beta}\in R_0\otimes_{\C} A(\fg,\beta,1)$.
%  \[\xymatrix{
%A(\g_{(n)},\beta,t^n)\ar[d]^{q^{R}_{R_0}}\ar[rrd]^{\iota_{n}T_n^{-1}}\ar[rr]^{\hspace{-20mm} \iota_{n}\otimes T_n^{-1}}&& \left(R\otimes_{\C}\mathcal{U}(\fg)\right) \otimes_R \left(R\otimes_{\C} {Cl}(\fp,{ \beta}_{\fp})  \right)\ar[d]\\
 %\iota^R_{R_0}(A(\g_{(n)},\beta,t^n))\ar[rrd]^{\iota_{R_0}^{R} (\iota_nT_n^{-1})}&&R\otimes_{\C} A(\fg,\beta,1) \ar[d]^{q^{R}_{R_0}}\\
 % &&R_0\otimes_{\C} A(\fg,\beta,1) }\]
\end{corollary*}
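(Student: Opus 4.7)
The plan is to verify the three claims in sequence using standard functorial properties of localization together with the results established earlier in the section.

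First, I would establish that $\iota^R_{R_0}(\iota_n T_n^{-1})$ is a $K$-equivariant isomorphism of $R_0$-algebras. For the factor $\mathcal{U}(\g_{(n)})$, this amounts to showing that $\iota^R_{R_0}(\iota_n)$ is an isomorphism onto $\iota^{\C}_{R_0}(\fg)$ (after identification via $m_0$); but this is precisely the content of the lemma of Section \ref{lhcp}, where $\psi = m_0 \circ \iota^R_{R_0}(\iota_n)$ was shown to be a $K$-equivariant isomorphism of Lie algebras. Concretely, because $t$ becomes invertible in $R_0$, any element $1_{R_0} \otimes X$ with $X \in \fp$ lies in the image of $\iota^R_{R_0}(\iota_n)$ via $t^{-n}(t^n \otimes X)$. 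For the Clifford factor, the map $T_n: R\otimes_{\C}\fp \to \p_{(n)}$ given by multiplication by $t^n$ becomes invertible after localization (with inverse induced by multiplication by $t^{-n}$) and therefore induces an isomorphism between the corresponding localized Clifford algebras; this is exactly the statement of the lemma preceding Section \ref{locspin}. Taking tensor products of these two isomorphisms over $R_0$ yields the desired $K$-equivariant isomorphism of $R_0$-algebras.

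Second, the commutativity of the diagram follows from the standard functorial property of localization:
\begin{equation*}
q^R_{R_0} \circ \varphi \;=\; \iota^R_{R_0}(\varphi) \circ q^R_{R_0}
\end{equation*}
for any morphism $\varphi$ of $R$-modules. Applied to $\varphi = \iota_n T_n^{-1}$, this gives precisely the required commutativity.

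Finally, the formula for the Dirac operator follows by combining the lemma of Section \ref{DO}, which asserts $(\iota_{\g} T_r^{-1}) D(\g,\beta,r) = r \otimes D_{\fg,\beta}$, specialized to $r = t^n$, with the commutativity just established. Namely, applying $q^R_{R_0}$ to $(\iota_n T_n^{-1}) D(\g_{(n)},\beta,t^n) = t^n \otimes D_{\fg,\beta}$ and using that $q^R_{R_0}$ acts trivially on the right-hand side (up to the canonical inclusion $R \hookrightarrow R_0$) gives $q^R_{R_0}\bigl((\iota_n T_n^{-1}) D(\g_{(n)},\beta,t^n)\bigr) = t^n \otimes D_{\fg,\beta}$ inside $R_0 \otimes_{\C} A(\fg,\beta,1)$.

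The argument is essentially bookkeeping: there is no substantive obstacle, the only mildly delicate point being to keep straight the various canonical identifications $\iota^R_{R_0}(\iota^{\C}_{R}(M)) \cong \iota^{\C}_{R_0}(M)$ and the compatibility of localization with tensor products, which have been recorded in the preliminaries of Section \ref{52}.
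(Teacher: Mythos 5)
Your argument is correct and follows essentially the same route as the paper: you invoke the lemma of Section \ref{lhcp} for the enveloping-algebra factor and the lemma on localized Clifford algebras for the Clifford factor to get the $K$-equivariant isomorphism of tensor factors, use the functoriality $q^R_{R_0}\circ\varphi=\iota^R_{R_0}(\varphi)\circ q^R_{R_0}$ for commutativity, and specialize the lemma of Section \ref{DO} to $r=t^n$ for the Dirac formula. The paper's proof is simply a more terse version of the same bookkeeping.
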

\begin{proof}
    Commutativity of the diagram follows from properties of localization. Since  $\iota^R_{R_0}(\iota_n)$ and $\iota^R_{R_0}(T_n)$ are $K$-equivariant isomorphisms then so is $\iota_{R_0}^{R} (\iota_nT_n^{-1})$. 
    It follows from Section \ref{DO}  that $\iota_nT_n^{-1}(D(\g,\beta,t^n))=t^n\otimes D_{\fg,\beta}\in R\otimes_{\C} A(\fg,\beta,1)$.
\end{proof}
\begin{definition*}
    We shall call $q^{R}_{R_0}(D(\g_{(n)},\beta,t^n))$ the \textit{Dirac operator of the localized  algebra $ \iota^R_{R_0}(A(\g_{(n)},\beta,t^n))$.}
\end{definition*}
 
\begin{remark*}
It can be shown that the Dirac operator  $D(\iota^{R}_{R_0}(\g_{(n)}),1_{R_0}\otimes \beta,1)$ of $A(\iota^{R}_{R_0} (\g_{(n)}))=\mathcal{U}(\iota^{R}_{R_0}(\g_{(n)}))\otimes_{R_0}Cl(\iota^{R}_{R_0}(\p_{(n)}),1_{R_0}\otimes \boldsymbol{\beta}^n)$ which is obtained in the usual way as in Section \ref{DO} via canonical sections, corresponds under the isomorphism $A(\iota^{R}_{R_0} (\g_{(n)}))\cong  \iota^{R}_{R_0} (A(\g_{(n)},\beta,t^n))$ to the Dirac operator $q^{R}_{R_0}(D(\g_{(n)},\beta,t^n))\in \iota^R_{R_0}(A(\g_{(n)},\beta,t^n))$. 
\end{remark*}

\subsubsection{The localized  modules of  the generalized pair}
Let $\boldsymbol{V}$ be   a $(\g_{(n)},K)-$module. We shall keep the notation for the actions as above  in  Section \ref{Ac}.   We shall denote the algebra  $A(\g_{(n)},\beta,t^n)$ by $\boldsymbol{A}_{(n)}$. 
The $R$-module $\boldsymbol{V}\otimes_R\bigwedge \p_{(n)}^-$ is an 
 $(\boldsymbol{A}_{(n)},\widetilde{K})$-module  with actions that we  denote by 
\[\pi_{\boldsymbol{A}_{(n)},\boldsymbol{V}}:\boldsymbol{A}_{(n)}\longrightarrow  \operatorname{End}_{R}(\boldsymbol{V}\otimes_R\bigwedge \p_{(n)}^-),\]
\[\pi_{\widetilde{K},\boldsymbol{V}}:\widetilde{K}\longrightarrow \operatorname{Aut}_{R}(\boldsymbol{V}\otimes_R\bigwedge \p_{(n)}^-).\]
We shall denote the morphism $\boldsymbol{A}_{(n)}\otimes_R \boldsymbol{V}\otimes_R\bigwedge \p_{(n)}^-\longrightarrow \boldsymbol{V}\otimes_R\bigwedge \p_{(n)}^- $  associated  with $\pi_{\boldsymbol{A}_{(n)},\boldsymbol{V}}$ by $\widetilde{\pi}_{\boldsymbol{A}_{(n)},\boldsymbol{V}}$.

The $R_0$-module 
$\iota^{R}_{R_0}(\boldsymbol{V}\otimes_R\bigwedge \p_{(n)}^-)$ is an 
 $(\iota^{R}_{R_0}(\boldsymbol{A}_{(n)}),\widetilde{K})$-module  with actions arising from localization. We  denote them  by 
\[\pi_{\iota^{R}_{R_0}(\boldsymbol{A}_{(n)}),\iota^{R}_{R_0}(\boldsymbol{V})}:\iota^{R}_{R_0}(\boldsymbol{A}_{(n)})\longrightarrow  \operatorname{End}_{R_0}(\iota^{R}_{R_0}(\boldsymbol{V}\otimes_R\bigwedge \p_{(n)}^-)),\]
\[\pi_{\widetilde{K},\iota^{R}_{R_0}(\boldsymbol{V})}:\widetilde{K}\longrightarrow \operatorname{Aut}_{R_0}(\iota^{R}_{R_0}(\boldsymbol{V})).\]
Note that $\pi_{\iota^{R}_{R_0}(\boldsymbol{A}_{(n)}),\iota^{R}_{R_0}(\boldsymbol{V})}$ is equal to the composition  
\[\xymatrix{
\iota^{R}_{R_0}(\boldsymbol{A}_{(n)})\ar[rr]^{\hspace{-10mm}\iota^{R}_{R_0}(\pi_{\boldsymbol{A}_{(n)},\boldsymbol{V}})}  &&\iota^{R}_{R_0}(\operatorname{End}_{R}(\boldsymbol{V}\otimes_R\bigwedge \p_{(n)}^-))      \ar[r]^{\cong}&\operatorname{End}_{R_0}(\iota^{R}_{R_0}(\boldsymbol{V}\otimes_R\bigwedge \p_{(n)}^-))       }\]
This induces a morphism 
 $ \iota_{R}^{R_0}(\boldsymbol{A}_{(n)})\otimes_{R_0} \iota_{R}^{R_0}(\boldsymbol{V}\otimes_R\bigwedge \p_{(n)}^-)\longrightarrow \iota_{R}^{R_0}(\boldsymbol{V}\otimes_R\bigwedge \p_{(n)}^- )$ that we denote by 
$\widetilde{\pi}_{\iota^{R}_{R_0}(\boldsymbol{A}_{(n)},\boldsymbol{V})}$.

Localization implies that  the diagram below is commutative   

   \[\xymatrix{
\boldsymbol{A}_{(n)}\otimes_R \boldsymbol{V}\otimes_R\bigwedge \p_{(n)}^- \ar[rr]^{\hspace{12mm}\widetilde{\pi}_{\boldsymbol{A}_{(n)},\boldsymbol{V}}}\ar[d]^{q^{R}_{R_0}}
&&\boldsymbol{V}\otimes_R\bigwedge \p_{(n)}^-\ar[d]^{q^{R}_{R_0}} \\
\iota^{R}_{R_0}(\boldsymbol{A}_{(n)})\otimes_{R_0}\iota^{R}_{R_0}(\boldsymbol{V}\otimes_R\bigwedge \p_{(n)}^- )  \ar[rr]^{\hspace{12mm} \widetilde{\pi}_{\iota^{R}_{R_0}(\boldsymbol{A}_{(n)},\boldsymbol{V})}} &&\iota^{R}_{R_0}(\boldsymbol{V}\otimes_R\bigwedge \p_{(n)}^- ) }\]
where all the arrows are $\widetilde{K}$-equivariant. 

Using the isomorphism $\iota_{R_0}^{R} (\iota_nT_n^{-1}):\iota^R_{R_0}(A(\g_{(n)},\beta,t^n))\longrightarrow R_0\otimes_{\C} A(\fg,\beta,1)$ we   equip $\iota^{R}_{R_0}(\boldsymbol{V}\otimes_R\bigwedge \p_{(n)}^- )$ with the structure of a module for the generalized pair  $(R_0\otimes_{\C} A(\fg,\beta,1),\widetilde{K})$.  
The last commutative diagram of Section \ref{locspin} implies that  this $(R_0\otimes_{\C} A(\fg,\beta,1),\widetilde{K})$-module  is the one arising from $\iota^{R}_{R_0}(\boldsymbol{V})$ as a $(R_0\otimes_{\C}  \fg, {K})$-module. This is further explained in the commutative diagram below.

   \[\xymatrix{
\boldsymbol{A}_{(n)}\otimes_R \boldsymbol{V}\otimes_R\bigwedge \p_{(n)}^- \ar[rr]^{\hspace{12mm}\widetilde{\pi}_{\boldsymbol{A}_{(n)},\boldsymbol{V}}}\ar[d]^{q^{R}_{R_0}}
&&\boldsymbol{V}\otimes_R\bigwedge \p_{(n)}^-\ar[d]^{q^{R}_{R_0}} \\
\iota^{R}_{R_0}(\boldsymbol{A}_{(n)})\otimes_{R_0}\iota^{R}_{R_0}(\boldsymbol{V}\otimes_R\bigwedge \p_{(n)}^- )\ar[d]_{\cong}^{\iota_{R_0}^{R} (\iota_nT_n^{-1})\otimes \mathbb{I}}  \ar[rr]^{\hspace{12mm} \widetilde{\pi}_{\iota^{R}_{R_0}(\boldsymbol{A}_{(n)},\boldsymbol{V})}} &&\iota^{R}_{R_0}(\boldsymbol{V}\otimes_R\bigwedge \p_{(n)}^- )\ar[d]_{\cong} \\
\left(R_0\otimes_{\C} A(\fg,\beta,1)\right)\otimes_{R_0}\iota^{R}_{R_0}(\boldsymbol{V})\otimes_{R_0}\iota^{R}_{R_0}(\bigwedge \p_{(n)}^- )\ar[d]_{\cong}^{\mathbb{I}\otimes\mathbb{I}\otimes  \iota_{R_0}^{R} ( T_{n,\fp^-}^{-1}) } && \iota^{R}_{R_0}(\boldsymbol{V})\otimes_{R_0}\iota^{R}_{R_0}(\bigwedge \p_{(n)}^- ) \ar[d]_{\cong}^{\mathbb{I}\otimes  \iota_{R_0}^{R} ( T_{n,\fp^-}^{-1}) }\\
\left(R_0\otimes_{\C} A(\fg,\beta,1)\right)\otimes_{R_0}\iota^{R}_{R_0}(\boldsymbol{V})\otimes_{R_0}\iota^{\C}_{R_0}(\bigwedge \fp^- )\ar@{.>}[rr]^{\hspace{20mm}\widetilde{\pi}_{\iota^{\C}_{R_0}(A),\iota^R_{R_0}(\boldsymbol{V})}}&&\iota^{R}_{R_0}(\boldsymbol{V})\otimes_{R_0}\iota^{\C}_{R_0}(\bigwedge \fp^- ) }\]
Note that the morphism $\widetilde{\pi}_{\iota^{\C}_{R_0}(A),\iota^R_{R_0}(\boldsymbol{V})}$ (the dotted arrow above) is uniquely  defined via the commutativity of the diagram.

\subsubsection{Localization and Dirac cohomology}\label{LDC}
Specializing the last commutative diagram to the case in which we fix the element of $\boldsymbol{A}_{(n)}$ to be the Dirac operator and taking into account Corollary \ref{526}   leads to the following  commutative diagram.

\[\xymatrix{
  \boldsymbol{V}\otimes_R\bigwedge \p_{(n)}^- \ar[rrr]^{\hspace{2mm}\pi_{\boldsymbol{A}_{(n)},\boldsymbol{V}}(D(\g_{(n)},\beta,t^n))}\ar[d]^{q^{R}_{R_0}}
&&&\boldsymbol{V}\otimes_R\bigwedge \p_{(n)}^-\ar[d]^{q^{R}_{R_0}} \\
 \iota^{R}_{R_0}(\boldsymbol{V}\otimes_R\bigwedge \p_{(n)}^- )  \ar[rrr]^{\hspace{2mm}\pi_{\iota^{R}_{R_0}(\boldsymbol{A}_{(n)}),\iota^{R}_{R_0}(\boldsymbol{V})}  (q^{R}_{R_0}D(\g_{(n)},\beta,t^n))}\ar[d]^{\cong} &&&\iota^{R}_{R_0}(\boldsymbol{V}\otimes_R\bigwedge \p_{(n)}^- )\ar[d]^{\cong}  \\
 \iota^{R}_{R_0}(\boldsymbol{V})\otimes_{R_0}\iota^{R}_{R_0}(\bigwedge \p_{(n)}^- )\ar[rrr]^{\hspace{-5mm}{\pi}_{\iota^{\C}_{R_0}(A),\iota^R_{R_0}(\boldsymbol{V})}(t^n\otimes D_{\fg,\beta})} &&& \iota^{R}_{R_0}(\boldsymbol{V})\otimes_{R_0}\iota^{R}_{R_0}(\bigwedge \p_{(n)}^- )
 }\]

\begin{proposition*}
    Let $\boldsymbol{V}$  be a  $(\g_{(n)},K)$-module.
There is a canonical $\widetilde{K}$-equivariant isomorphism of $R_0$-modules
\[ \iota^R_{R_0}(H_{D(\g_{(n)},\beta,t^n)}(\boldsymbol{V}))\cong H_{q^{R}_{R_0}(D(\g_{(n)},\beta,t^n))}(\iota^{R}_{R_0}(\boldsymbol{V}))\]
\end{proposition*}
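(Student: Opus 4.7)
The plan is to reduce the statement to the fact that localization of $R$-modules with respect to a multiplicatively closed set is an exact functor, so it commutes with formation of kernels, images and quotients. Denote by $D$ the operator $\pi_{\boldsymbol{A}_{(n)},\boldsymbol{V}}(D(\g_{(n)},\beta,t^n))\in \operatorname{End}_R(\boldsymbol{V}\otimes_R \bigwedge \p_{(n)}^-)$ and by $D_0$ its localization $\iota^R_{R_0}(D)\in \operatorname{End}_{R_0}(\iota^R_{R_0}(\boldsymbol{V}\otimes_R\bigwedge \p_{(n)}^-))$. The commutative diagram at the end of Section \ref{LDC} identifies $D_0$ with $\pi_{\iota^{R}_{R_0}(\boldsymbol{A}_{(n)}),\iota^{R}_{R_0}(\boldsymbol{V})}(q^{R}_{R_0}(D(\g_{(n)},\beta,t^n)))$, so the two operators we are comparing really are the same under the canonical identifications. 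Hence it suffices to prove, for any endomorphism $D\in \operatorname{End}_R(M)$ of an $R$-module $M$, that there is a canonical isomorphism of $R_0$-modules
\[ \iota^R_{R_0}\!\left( \ker D / (\ker D \cap \operatorname{im} D) \right) \;\cong\; \ker(\iota^R_{R_0}(D)) / \bigl(\ker (\iota^R_{R_0}(D)) \cap \operatorname{im} (\iota^R_{R_0}(D))\bigr), \]
and moreover that this isomorphism is natural with respect to $\widetilde{K}$-equivariant endomorphisms of $M$ that commute with $D$.

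The first step is to note that $R_0 = \mathcal{S}^{-1}R$ is a flat $R$-module, so $\iota^R_{R_0} = R_0\otimes_R(\_)$ is exact. Applying it to the short exact sequence $0\to \ker D \to M \xrightarrow{D} M$ yields $\iota^R_{R_0}(\ker D) \cong \ker(\iota^R_{R_0}(D))$, and applying it to $M\xrightarrow{D} M \to \operatorname{coker} D\to 0$ together with the first isomorphism yields $\iota^R_{R_0}(\operatorname{im} D)\cong \operatorname{im}(\iota^R_{R_0}(D))$. The second step is to apply exactness to the inclusion $\ker D \cap \operatorname{im} D \hookrightarrow \ker D$: this identifies $\iota^R_{R_0}(\ker D\cap \operatorname{im} D)$ with a submodule of $\iota^R_{R_0}(\ker D) \cong \ker(\iota^R_{R_0}(D))$, and a quick check using the explicit form $1_{R_0}\otimes(\_)$ of $q^R_{R_0}$ shows that this submodule coincides with $\ker(\iota^R_{R_0}(D))\cap\operatorname{im}(\iota^R_{R_0}(D))$. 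Finally, applying $\iota^R_{R_0}$ to the short exact sequence $0\to \ker D\cap \operatorname{im} D\to \ker D\to H_D\to 0$ gives the desired isomorphism.

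For the $\widetilde{K}$-equivariance, note that the $\widetilde{K}$-action on $\iota^R_{R_0}(\boldsymbol{V}\otimes_R\bigwedge \p_{(n)}^-)$ is by definition the localization of the $\widetilde{K}$-action on $\boldsymbol{V}\otimes_R\bigwedge \p_{(n)}^-$, hence all the canonical isomorphisms constructed above are $\widetilde{K}$-equivariant by naturality of localization applied to each group element. This completes the plan. I do not anticipate any essential obstacle; the only point that requires mild attention is the identification $\iota^R_{R_0}(\ker D\cap\operatorname{im} D)\cong \ker(\iota^R_{R_0}(D))\cap\operatorname{im}(\iota^R_{R_0}(D))$, which follows from exactness applied to the pullback square defining the intersection, or equivalently by writing elements of $\iota^R_{R_0}(M)$ as $t^{-k}\otimes m$ and clearing denominators to reduce to statements over $R$.
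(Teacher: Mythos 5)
Your proposal is correct and takes essentially the same approach as the paper: both proofs first use the localized commutative diagram from the preceding section to identify $\iota^R_{R_0}(D)$ with the localized Dirac operator, and both then rely on the fact that localization (being flat, hence exact) commutes with kernel, image, and intersection to transfer the subquotient $\ker D/(\ker D\cap\operatorname{im}D)$ across $\iota^R_{R_0}$. The paper simply cites this commutation property in one sentence, whereas you spell out the exact-sequence arguments behind it; this is the same mechanism, just written out in more detail.
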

 \begin{proof}
 From the  last commutative diagram we see that the $\widetilde{K}$-equivariant map $q^{R}_{R_0}:\boldsymbol{V}\otimes_R\bigwedge \p_{(n)}^-\longrightarrow \iota^R_{R_0}(\boldsymbol{V}\otimes_R\bigwedge \p_{(n)}^-)$ 
maps  $\operatorname{Ker}(\pi_{\boldsymbol{A}_{(n)},\boldsymbol{V}}(D(\g_{(n)},\beta,t^n)))$ 
 into $\operatorname{Ker}(\pi_{\iota^{R}_{R_0}(\boldsymbol{A}_{(n)}),\iota^{R}_{R_0}(\boldsymbol{V})}  (q^{R}_{R_0}D(\g_{(n)},\beta,t^n)))$. Since for free modules over a principal ideal domain localization commutes with kernel, image and intersection, the map between the abovementioned kernels induces the 
  required  isomorphism. 
 \end{proof}

 \subsubsection{Proof of Vogan's conjecture for families} 
In this section  we prove Theorem \ref{52}.

\begin{proof}[Proof of Theorem \ref{52} ]
Since $\boldsymbol{V}$ is generically irreducible and admissible it has a central infinitesimal character $\chi_{\g_{(n)},\boldsymbol{V}}:\mathcal{Z}(\g_{(n)})\longrightarrow R$.  By Lemma \ref{Ac},
    $\iota^{R}_{R_0}(\boldsymbol{V})$  is a generically irreducible $(\iota^{\C}_{R_0}(\fg),K)$-module  with central infinitesimal character $\iota^{R}_{R_0}(\chi_{ \g_{(n)},\boldsymbol{V}})\circ \psi^{-1}:\iota^{\C}_{R_0}(\mathcal{Z}(\fg))\longrightarrow R_0\otimes_{R}R\cong R_0$.  We shall denote the isomorphism sending $r\otimes s\in R_0\otimes_{R}R$ to  $rs\in R_0$ by $m_1$.

%the $(\iota^{\C}_{R_0}(\fg),K)$-module  $\iota^{R}_{R_0}(\boldsymbol{V})$  has an infinitesimal character $\boldsymbol{\lambda}_0\in \iota_{R_0}^{\C}(\fh)^*$
%with respect to the fundamental Cartan subfamily 
%$\iota_{R_0}^{\C}(\fh)$. Moreover, Theorem \ref{51} also imply that  there is 
%$w\in W(\fg,\fh)$  such that 
%$w\cdot \boldsymbol{\lambda}_0=\mathbb{I}_{R_0}\otimes (\mu+ \rho_\mathfrak{k})^{\ft}_{\fh}\in \iota_{R_0}^{\C}(\fh)^*$.
%In other words $\mathbb{I}_{R_0}\otimes(\mu+\rho_{\fk})^{\ft}_{\fh}$ is an infinitesimal character of $\boldsymbol{\lambda}_0\in \iota_{R_0}^{\C}(\fh)^*$
%with respect to the fundamental Caratn subfamily 
%$\iota_{R_0}^{\C}(\fh)$.

We note that for every $f\otimes \xi\in  \iota_{R_0}^{\C}(\mathcal{Z}(\fg))  =   \mathcal{Z}(\iota_{R_0}^{\C}(\fg))$, %the action of $f\otimes z\otimes 1_{ (Cl(\fp,\beta))}\in R_0\otimes_{\C} A(\fg,\beta,1) $ on 
and every $(g_1\otimes v)\otimes (g_2\otimes  s)\in \iota^{R}_{R_0}(\boldsymbol{V})\otimes_{R_0}\iota^{\C}_{R_0}(\bigwedge \fp^- ) $,
\begin{eqnarray}\nonumber
&& {\pi}_{\iota^{\C}_{R_0}(A),\iota^R_{R_0}(\boldsymbol{V})}(f\otimes \xi \otimes 1_{ (Cl(\fp,\beta))}) \big((g_1\otimes v)\otimes (g_2\otimes  s)\big)=   \\ \nonumber 
&& \iota^R_{R_0}(\chi_{ \g_{(n)},\boldsymbol{V}})(\psi^{-1}(f\otimes \xi))(g_1\otimes  v)\otimes (g_2\otimes  s).
\end{eqnarray}
Recall that for any $m\in \mathbb{N}$ large enough such that $t^m\otimes \xi\in \mathcal{Z}(\g_{(n)})$, we have $\psi^{-1}(f\otimes \xi)=t^{-m}f\otimes t^m \otimes \xi$. Hence for such $m$, 
\begin{eqnarray}\nonumber
&& {\pi}_{\iota^{\C}_{R_0}(A),\iota^R_{R_0}(\boldsymbol{V})}(f\otimes \xi \otimes 1_{ (Cl(\fp,\beta))}) \big((g_1\otimes v)\otimes (g_2\otimes  s)\big)=   \\ \nonumber 
&& \iota^R_{R_0}(\chi_{ \g_{(n)},\boldsymbol{V}})(t^{-m}f\otimes t^m \otimes \xi)(g_1\otimes  v)\otimes (g_2\otimes  s)=   \\ \nonumber 
&&m_1\left( t^{-m}f  \otimes    \chi_{ \g_{(n)},\boldsymbol{V}}( t^m \otimes \xi )\right)(g_1\otimes  v)\otimes (g_2\otimes  s).%=   \\ \nonumber 
%&&\left( t^{-m}f \otimes  \chi_{\g,\boldsymbol{V}}( t^m \otimes z)\right)(g_1\otimes  v)\otimes (g_2\otimes  s)
\end{eqnarray}

We now obtain another expression for 
$${\pi}_{\iota^{\C}_{R_0}(A),\iota^R_{R_0}(\boldsymbol{V})}(f\otimes \xi \otimes 1_{ (Cl(\fp,\beta))}) \big((g_1\otimes v)\otimes (g_2\otimes  s)\big).$$ 
 By Proposition \ref{LDC}, $H_{q^{R}_{R_0}(D(\g_{(n)},\beta,t^n))}(\iota^{R}_{R_0}(\boldsymbol{V}))$ contains a $\widetilde{K}$-type of highest weight $\mu\in\mathfrak{t}^*$.   By Theorem \ref{51},
$\mathbb{I}_{R_0}\otimes (\mu+ \rho_\mathfrak{k})^{\ft}_{\fh}\in \iota_{R_0}^{\C}(\fh)^*$ is an infinitesimal character of the $(\iota^{\C}_{R_0}(\fg),K)$-module  $\iota^{R}_{R_0}(\boldsymbol{V})$ 
with respect to the fundamental Cartan subfamily 
$\iota_{R_0}^{\C}(\fh)$. Hence for every $f\otimes \xi\in  \iota_{R_0}^{\C}(\mathcal{Z}(\fg))  =   \mathcal{Z}(\iota_{R_0}^{\C}(\fg))$,  
and every $(g_1\otimes v)\otimes (g_2\otimes  s)\in \iota^{R}_{R_0}(\boldsymbol{V})\otimes_{R_0}\iota^{\C}_{R_0}(\bigwedge \fp^- ) $,
\begin{eqnarray}\nonumber
&& {\pi}_{\iota^{\C}_{R_0}(A),\iota^R_{R_0}(\boldsymbol{V})}(f\otimes \xi \otimes 1_{ (Cl(\fp,\beta))}) \big((g_1\otimes v)\otimes (g_2\otimes  s)\big)=   \\ \nonumber 
%&& \left(\widehat{\boldsymbol{\lambda}_0} \circ \boldsymbol{\operatorname{HC}}^{\iota^{\C}_{R_0}(\fg)}_{\iota^{\C}_{R_0}(\fh)}(f\otimes \xi)\right)
% (g_1\otimes  v)\otimes (g_2\otimes  s)\\ \nonumber 
&& \left(\widehat{\{ \mathbb{I}_{R_0}\otimes(\mu+\rho_{\fk})^{\ft}_{\fh}  \} } \circ \boldsymbol{\operatorname{HC}}^{\iota^{\C}_{R_0}(\fg)}_{\iota^{\C}_{R_0}(\fh)}(f\otimes \xi)\right)
 (g_1\otimes  v)\otimes (g_2\otimes  s).
\end{eqnarray}
Comparing the two expressions for the scalar operator  ${\pi}_{\iota^{\C}_{R_0}(A),\iota^R_{R_0}(\boldsymbol{V})}(f\otimes \xi \otimes 1_{ (Cl(\fp,\beta))})$ with the choice of $f=t^{-m}$, we see that for every $\xi \in \mathcal{Z}(\fg)$, and $m\in \mathbb{N}$ large enough,

\begin{eqnarray}\nonumber
&& m_1(1 \otimes  \chi_{  \g_{(n)},\boldsymbol{V}}( t^m \otimes \xi))=\widehat{\{ \mathbb{I}_{R_0}\otimes(\mu+\rho_{\fk})^{\ft}_{\fh}  \} } \circ \boldsymbol{\operatorname{HC}}^{\iota^{\C}_{R_0}(\fg)}_{\iota^{\C}_{R_0}(\fh)}(t^m\otimes \xi ).
\end{eqnarray}
We first note the left hand side of the last equation is equal to $\chi_{ \g_{(n)},\boldsymbol{V}}( t^m \otimes \xi)$. On the other hand the right hand side of the same equation is equal to 
\begin{eqnarray}\nonumber
%&& \widehat{\{ \mathbb{I}_{R_0}\otimes(\mu+\rho_{\fk})^{\ft}_{\fh}  \} } \circ \boldsymbol{\operatorname{HC}}^{\iota^{\C}_{R_0}(\fg)}_{\iota^{\C}_{R_0}(\fh)}(t^m\otimes \xi )=  t^m \widehat{  (\mu+\rho_{\fk})^{\ft}_{\fh}   } \circ \operatorname{HC}^{\fg}_{\fh}(\xi)=\\ \nonumber
&&
t^m \otimes \widehat{  (\mu+\rho_{\fk})^{\ft}_{\fh}   } \circ \operatorname{HC}^{\fg}_{\fh}(\xi)= m_1\left(1\otimes t^m \otimes \widehat{  (\mu+\rho_{\fk})^{\ft}_{\fh}   } \circ \operatorname{HC}^{\fg}_{\fh}(\xi)\right)= \\ \nonumber
 &&m_1\left(1\otimes    \left(\widehat{\{ \mathbb{I}_{R}\otimes(\mu+\rho_{\fk})^{\ft}_{\fh}  \} } \circ \boldsymbol{\operatorname{HC}_{\fh_{(n)}}^{\mathfrak{g}_{(n)}}}(t^m\otimes \xi) \right)\right)= \\ \nonumber 
&&\widehat{\{ \mathbb{I}_{R}\otimes(\mu+\rho_{\fk})^{\ft}_{\fh}  \} } \circ \boldsymbol{\operatorname{HC}_{\fh_{(n)}}^{\mathfrak{g}_{(n)}}}(t^m\otimes \xi).
\end{eqnarray} 

This shows that $\mathbb{I}_R\otimes (\mu+ \rho_\mathfrak{k})^{\ft}_{\fh}\in  \boldsymbol{\mathfrak{h}}_{(n)}^*$ is an infinitesimal character of $\boldsymbol{V}$  with respect to the fundamental Cartan subfamily $\boldsymbol{\fh}_{(n)}$. 

If  
$\boldsymbol{\lambda}\in \boldsymbol{\fh}_{(n)}^*$ is  an infinitesimal character of $\boldsymbol{V}$ with respect to $\boldsymbol{\fh}_{(n)}$
we must have 
\begin{eqnarray}\nonumber
&&   \widehat{\boldsymbol{\lambda}}(\mathbb{I}_R\otimes \operatorname{HC}_{\fh}^{\mathfrak{g}})|_{\g_{(n)}} =( \widehat{\mathbb{I}_R\otimes(\mu+\rho_{\fk})^{\ft}_{\fh}}) (\mathbb{I}_R\otimes   \operatorname{HC}_{\fh}^{\mathfrak{g}})|_{\g_{(n)}}
\end{eqnarray}
and 
$\boldsymbol{\lambda}$  must be  $W(\fg,\fh)$-conjugate to $\mathbb{I}_{R}\otimes(\mu+\rho_{\fk})^{\ft}_{\fh} $.%%%%

\end{proof}

 \section{The  case of $SL(2,\R)$}\label{SL}

In this section, we consider the deformation family  $(\g_d,K)$ of $SL(2,\R)$.  We calculate the Dirac cohomology of any generically irreducible algebraic family of $(\g_d,K)$-modules  and verify Vogan's conjecture explicitly. 

\subsection{Preliminaries and notations}

 In this case, using  the notation of the previous sections, we have

 \begin{equation*}
	\begin{array}{rlrl}
     G\hspace{-2mm}&=SL(2,\C), \hspace{2mm}&G(\R)\hspace{-2.5mm}&=G^{\sigma}=SL(2,\R),\\
     K\hspace{-2.5mm}&=SO(2,\C), \hspace{2mm}&K(\R)\hspace{-2.5mm}&=
     K^{\sigma}=SO(2,\R),\\
     \fg\hspace{-2.5mm}&=\mathfrak{sl}(2,\C),\hspace{2mm}&
    \fg^{\sigma}\hspace{-2mm}&=\mathfrak{sl}(2,\R),\\
    \fk\hspace{-2.5mm}&=\mathfrak{so}(2,\C),\hspace{2mm}&
       \fk^{\sigma}\hspace{-2mm}&=\mathfrak{so}(2,\R),\\
     \fp \hspace{-2.5mm}&=\{X\in \fg|X^t=X\},\hspace{2mm}&
      \fp^{\sigma} \hspace{-2mm}&=\{X\in \fg^{\sigma}|X^t=X\}.
\end{array}
\end{equation*}

We fix an $\mathfrak{sl}_2$-triple $\{x,y,h\}$ in $\fg$ satisfying the usual commutation relations
\[ [x,y]=h,\hspace{2mm} [h,x]=2x, \hspace{2mm}[h,y]=-2y,\]
and such that 
 $\mathfrak{k}= \mathbb{C}h$ and $\mathfrak{p}= \mathbb{C}x\oplus \C y$.

We define three sections in the constant family $\C[t]\otimes_{\C}\fg$ by 
\[\tilde{x}:=t\otimes x,\hspace{2mm} \tilde{y}:=t\otimes y,\hspace{2mm} \tilde{h}:=1\otimes h.\]
Then    $\g_d=\k\oplus \p_d$    where 
\begin{equation*}\k=\C[t]\otimes_{\C}\fk=\C[t]\tilde{h}
,\hspace{3mm}\p_d= \langle t\rangle \otimes_{\C}\fp=\mathbb{C}[t] \tilde{x}\oplus  \mathbb{C}[t] \tilde{y}.
\end{equation*}
The commutation relations of $\{\tilde{x},\tilde{y},\tilde{h}\} $  are given by 
\[[\tilde{h},\tilde{x}]=2\tilde{x}, \hspace{2mm}[\tilde{h},\tilde{y}]=-2\tilde{y},\hspace{2mm}[\tilde{x},\tilde{y}]=t^2\tilde{h}.\]

Let $\boldsymbol{\beta}_d$ be the $\mathbb{C}[t]$-bilinear symmetric $(\boldsymbol{\mathfrak{k}},K)$-invariant and orthogonalizable form on $\boldsymbol{\mathfrak{p}}_d$ given in Lemma \ref{s422}. With respect to the basis $\{\tilde{x},\tilde{y}\}$, the form $\boldsymbol{\beta}_d$ is given by the matrix
	\begin{equation*}
		\begin{pmatrix}
			0&4\\
			4&0
		\end{pmatrix}.
	\end{equation*}
	
The Clifford algebra $Cl(\boldsymbol{\mathfrak{p}}_d)$ of the quadratic space $(\boldsymbol{\mathfrak{p}}_d,\boldsymbol{\beta}_d)$, as a $\mathbb{C}[t]$-module, is generated by the elements $\{1,\gamma_d(\tilde{x}),\gamma_d(\tilde{y}),\gamma_d(\tilde{x})\gamma_d(\tilde{y})\}$ where $\gamma_d:\boldsymbol{\mathfrak{p}}_d\rightarrow Cl(\boldsymbol{\mathfrak{p}}_d)$ is the canonical embedding. 

    With respect to the Killing form, $\mathfrak{p}= \mathbb{C}x\oplus \C y$ is  a decomposition into dual maximal isotropic subspaces. We let $\fp^-=\C y$
and   $\fp^+=\C x$.   The corresponding spin module $\boldsymbol{S}_d$ for the Clifford algebra $Cl(\boldsymbol{\mathfrak{p}}_d)$  is the $\C[t]$-module $\boldsymbol{S}_d:=\bigwedge \boldsymbol{\mathfrak{p}}_d^-$, where $\boldsymbol{\mathfrak{p}}_d^-:=\mathbb{C}[t]\tilde{y}$. It is  equipped with the $Cl(\boldsymbol{\mathfrak{p}}_d)$-action $\gamma_d':Cl(\boldsymbol{\mathfrak{p}}_d)\rightarrow \mathrm{End}(\boldsymbol{S}_d)$ defined by
	\begin{align*}
		\gamma_d'(\tilde{x})1&=0&	\gamma_d'(\tilde{y})1=\tilde{y}\\
		\gamma_d'(\tilde{x})\tilde{y}&=4&	\gamma_d'(\tilde{y})\tilde{y}=\tilde{y}\wedge \tilde{y}=0.
	\end{align*}

    The action of 
    $\boldsymbol{\mathfrak{k}}$  on $\boldsymbol{S}_d$ is determined by 
	\begin{equation*}
		\boldsymbol{\alpha}(\tilde{h}):=\frac{1}{2}\gamma_d'(\tilde{x})\gamma_d'(\tilde{y})-1.
	\end{equation*}
	Under this action there are $2$ weight vectors for $\tilde{h}$ in $\boldsymbol{S}_d$, namely $1$ and $\tilde{y}$ of weights $1$ and $-1$, respectively.
	
	The Dirac element $D_d$ of the algebra $U(\boldsymbol{\mathfrak{g}}_d)\otimes_{\C[t]}	Cl(\boldsymbol{\mathfrak{p}}_d)$ is given by
	\begin{equation*}
		D_d=\frac{1}{4}\{\tilde{x}\otimes \gamma_d({\tilde{y}})+\tilde{y}\otimes\gamma_d(\tilde{x})\}\in U(\boldsymbol{\mathfrak{g}}_d)\otimes_{\C[t]} Cl(\boldsymbol{\mathfrak{p}}_d)
	\end{equation*}
	while
	\begin{equation*}
		D_d^2=\frac{1}{16}(\tilde{x}\tilde{y}\otimes\gamma_d(\tilde{y})\gamma_d(\tilde{x})+\tilde{y}\tilde{x}\otimes\gamma_d(\tilde{x})\gamma_d(\tilde{y})).
		%&=t^2\big(\Omega_\mathfrak{g}\otimes1-(\Omega_\mathfrak{k})_\Delta+\Vert\rho_\mathfrak{g}\Vert^2-\Vert\rho_\mathfrak{k}\Vert^2\big)
	\end{equation*}
	%
	%
	%Consider the Casimir element of $\boldsymbol{\mathfrak{g}}$ and the "Casimir element" of $\boldsymbol{\mathfrak{k}}$.
	%
	%\begin{align*}
	%	\Omega_{\boldsymbol{\mathfrak{g}}}:&=\frac{1}{8}h^2+\frac{1}{4}xy+\frac{1}{4}yx\\
	%	&=\frac{1}{8}\tilde{h}^2+\frac{1}{4t^2}\tilde{x}\tilde{y}+\frac{1}{4t^2}\tilde{y}\tilde{x},\\
	%	\Omega_{\boldsymbol{\mathfrak{k}}}:&=\frac{1}{8}h^2=\frac{1}{8}\tilde{h}^2
	%\end{align*}
	%Then
	%\begin{equation*}
	%	D^2=t^2(\Omega_{\boldsymbol{\mathfrak{g}}}\otimes1-\Omega_{{\boldsymbol{\mathfrak{k}}}_\Delta}+\Vert\rho\Vert^2-\Vert\rho_\mathfrak{k}\Vert^2).
	%\end{equation*}
	
	In what follows, we compute the Dirac cohomology of all generically irreducible $(\boldsymbol{\mathfrak{g}}_d,K)$-modules $\boldsymbol{V}$. In the study of these $(\boldsymbol{\mathfrak{g}}_d,K)$-modules, we will need the element
		$\Omega_d:=\Omega(\boldsymbol{\mathfrak{g}}_d,\beta,t)=t^2\otimes\Omega_{\mathfrak{g}}\in \mathcal{Z}(\g_d)$
	where $\Omega_\mathfrak{g}$ stands for the Casimir element of $\mathfrak{g}$ (see Subsection 4.4). Explicitly in terms of the chosen basis 
		\begin{align*}
		\Omega_d
		&=
		\frac{1}{8}t^2\tilde{h}^2+\frac{1}{4}\tilde{x}\tilde{y}+\frac{1}{4}\tilde{y}\tilde{x}.%\\
		%&=\frac{1}{8}t^2\tilde{h}^2-\frac{1}{4}t^2\tilde{h}+\frac{1}{2}\tilde{x}\tilde{y}\\
	%	&=\frac{1}{8}t^2\tilde{h}^2+\frac{1}{4}t^2\tilde{h}+\frac{1}{2}\tilde{y}\tilde{x}.
	\end{align*}
The center of the universal enveloping algebra of $\g_d$ is the polynomial ring over $\C[t]$ generated by $\Omega_d$, i.e., 
$\mathcal{Z}(\g_d)=\C[t][\Omega_d]$.

The Lie algebra $\k$ is  also a fundamental (actually $\theta$-fixed) Cartan subfamily of $\g_d$ and 
\[\g_d=\boldsymbol{\mathfrak{n}}^-\oplus \k \oplus \boldsymbol{\mathfrak{n}}^+\] 
is a  $\theta$-stable triangular decomposition where $\boldsymbol{\mathfrak{n}}^-=\C[t]\tilde{y}$, and $\boldsymbol{\mathfrak{n}}^+=\C[t]\tilde{x}$.
The Harish-Chandra homomorphism of $\g_d$ with respect to the abovementioned triangular decomposition satisfies

%$$\boldsymbol{\operatorname{HC}_{\fh}^{\mathfrak{g}}}(\Omega_d)=\boldsymbol{\operatorname{HC}_{\fh}^{\mathfrak{g}}}\left(\frac{1}{8}t^2\tilde{h}^2+\frac{1}{4}t^2\tilde{h}+\frac{1}{2}\tilde{y}\tilde{x}\right)=\frac{t^2}{8}((\tilde{h}-1)^2+2(\tilde{h}-1)) $$
$$\boldsymbol{\operatorname{HC}_{\fk}^{\mathfrak{g}_d}}(\Omega_d)=\frac{t^2}{8}(\tilde{h}^2-1). $$

	\subsection{Generically irreducible $(\boldsymbol{\mathfrak{g}}_d,K)$-modules}
    In this subsection we recall some facts on generically irreducible $(\boldsymbol{\mathfrak{g}}_d,K)$-modules.  We will recall two basic  invariants of generically irreducible $(\boldsymbol{\mathfrak{g}}_d,K)$-modules, the   set  of $K$-types and  the action of the Casimir.  We shall explain  how these invariants can be used to explicitly describe  generically irreducible $(\boldsymbol{\mathfrak{g}}_d,K)$-modules.  
    We   provide no proofs, these are  easily deduced from the classification given in  \cite{eyallietheory}. 

    \subsubsection{The set of $K$-types}
The set of $K$-types $\mathcal{K}(\boldsymbol{V})$ of a generically irreducible  $(\boldsymbol{\mathfrak{g}}_d,K)$-module $\boldsymbol{V}$ must coincide with the set of  $K$-types  $\mathcal{K}(V)$ of some  irreducible $(\mathfrak{sl}(2,\C)),K)$-module $V$. The various  $\mathcal{K}(V)$  can be identified with the set of eigenvalues (or weights) of ${h}$ in $V$. These are subsets of integers of the same parity.  

We shall freely identify $K$-types and weights for the Cartan subalgebra $\fk$. 

Below we  specify $\mathcal{K}(V)$ for any irreducible $(\mathfrak{sl}(2,\C)),K)$-module $V$.
\begin{enumerate}
    \item Discrete series representations: 
    \begin{enumerate}
        \item Positive (or  lowest weight) Discrete series: For any $m\in \mathbb{N}$, there is an irreducible $(\mathfrak{sl}(2,\C)),K)$-module $DS_m^+$ with $\mathcal{K}(DS_m^+)=\{m,m+2,.... \}$.
        \item Negative (or highest weight) Discrete series: For any $m\in \mathbb{N}$, there is an irreducible $(\mathfrak{sl}(2,\C)),K)$-module $DS_m^-$ with $\mathcal{K}(DS_m^-)=\{-m,-m-2,.... \}$.
    \end{enumerate}
    \item Finite dimensional representations: For any $m\in \mathbb{N}_0$, there is an irreducible $(m+1)$-dimensional  $(\mathfrak{sl}(2,\C)),K)$-module $F_m$ with $\mathcal{K}(F_m)=\{-m,-m+2,...,m\}$.
    \item Principal series representations: 
    \begin{enumerate}
        \item Spherical principal series: $\mathcal{K}(V)=2\Z$
        \item Non-Spherical principal series: $\mathcal{K}(V)=2\Z+1$.
    \end{enumerate}
\end{enumerate}
\begin{remark*}
    As mentioned above, we identify each $K$-type with an eigenvalue of $h$. Below we shall also identify the eigenvalue $m\in \Z$ of $h$ with the linear functional $\mu_m\in \fk^*$ that is defined by $\mu_m(h)=m$. 
\end{remark*}
   \subsubsection{Action of the Casimir and infinitesimal character}
On a generically irreducible  $(\boldsymbol{\mathfrak{g}}_d,K)$-module $\boldsymbol{V}$ the Casimir $\Omega_d$ must act via multiplication by a polynomial function $\omega_{\boldsymbol{V}}(t)\in \C[t]$. 

The generically irreducible $(\boldsymbol{\mathfrak{g}}_d,K)$-module   $\boldsymbol{V}$ has an infinitesimal character with respect to the fundamental   Cartan subfamily $\k$   if and only if there is $\boldsymbol{\lambda}\in \k^*=\operatorname{Hom}_{\C[t]}(\k,\C[t])=\C[t]\otimes_{\C}\fk^*=\C[t]\otimes_{\C}\operatorname{Hom}_{\C}(\fk,\C)$, such that 
\[\omega_{\boldsymbol{V}}(t)=\frac{t^2}{8}(\boldsymbol{\lambda}(\tilde{h})^2-1).\]

\subsubsection{The weight basis}
Let $\boldsymbol{V}$ be a generically irreducible  $(\boldsymbol{\mathfrak{g}}_d,K)$-module. Then as $K$-modules over $\C[t]$ we have a direct sum decomposition 
\[\boldsymbol{V}=\bigoplus_{n\in \mathcal{K}(V)}\boldsymbol{V}_n, \]
where the action of $\tilde{h}$ on each $\boldsymbol{V}_n$ is given by  multiplication by $n$.

Let $v_n$ be a generator of $V_n$ over $\C[t]$.
The commutation relations of  $\{\tilde{x},\tilde{y},\tilde{h}\} $ imply that for every $n\in\mathcal{K}(V) $, there are polynomials $A_n(t),B_n(t)\in \C[t]$   such that 
	\begin{align*}
		\tilde{x} v_n&=A_n(t)v_{n+2}\\
		\tilde{y} v_{n+2}&=B_n(t)v_n,
	\end{align*}
	where by convention, we set $A_{n}(t)\equiv 0$, $v_{n+2}=0$ when $n+2\not \in \mathcal{K}(V)$, and $B_{n}(t)\equiv 0$, $v_n=0$ when $n\not \in \mathcal{K}(V)$.

Since $\Omega_d=\frac{1}{8}t^2\tilde{h}^2+\frac{1}{4}t^2\tilde{h}+\frac{1}{2}\tilde{y}\tilde{x}$, for every $n\in\mathcal{K}(V)$,
\begin{eqnarray}\nonumber
   && \omega_{\boldsymbol{V}}(t)=\frac{1}{8}t^2n^2+\frac{1}{4}t^2n+\frac{1}{2}A_n(t)B_n(t).
\end{eqnarray}
Equivalently $A_n(t)B_n(t)= 2\omega_{\boldsymbol{V}}(t)-\frac{t^2}{4}n\left(n+  2\right)$.
%\begin{eqnarray}\nonumber
%   && A_n(t)B_n(t)= 2\omega_{\boldsymbol{V}}(t)-\frac{t^2}{4}n\left(n+  2\right).
%\end{eqnarray}

	\subsection{Dirac cohomology of   $(\boldsymbol{\mathfrak{g}}_d,K)$-modules}
In this subsection we calculate the Dirac cohomology of  any generically irreducible  $(\boldsymbol{\mathfrak{g}}_d,K)$-module and we verify Vogan's conjecture.
\begin{theorem*}
		Let $\boldsymbol{V}$ be a generically irreducible $(\boldsymbol{\mathfrak{g}}_d,K)$-module. Then $H_{D_{d}}(\boldsymbol{V})=0$ if  and only if   $\mathcal{K}(\boldsymbol{V})$ is equal to the set of $K$-types of an $(\mathfrak{sl}(2,\C)),K)$-module of a principal series representation. 
        
    Moreover, 
    \begin{enumerate}
    \item     If $\mathcal{K}(\boldsymbol{V})=\mathcal{K}(DS_m^+)$, as $\k$-modules over $\C[t]$   \[H_{D_{d}}(\boldsymbol{V})\cong \mathbb{C}[t]\{v_m \otimes\tilde{y}\},\] 
    and in particular the only weight   of $H_{D_{d}}(\boldsymbol{V})$ is $\mu_{m-1}$.
        \item     If $\mathcal{K}(\boldsymbol{V})=\mathcal{K}(DS_m^-)$, as $\k$-modules over $\C[t]$   
        \[H_{D_{d}}(\boldsymbol{V})\cong \mathbb{C}[t]\{v_{-m} \otimes1\},\]
        and in particular the only weight   of $H_{D_d}(\boldsymbol{V})$ is $\mu_{-m+1}$.
         \item     If $\mathcal{K}(\boldsymbol{V})=\mathcal{K}(F_m)$, as $\k$-modules over $\C[t]$   \[H_{D_d}(\boldsymbol{V})\cong\mathbb{C}[t]\{v_m\otimes 1,v_{-m}\otimes \tilde{y}\},\] and in particular the only weights   of $H_{D_d}(\boldsymbol{V})$ are $\mu_{m+1}$, $\mu_{-m-1}$.
    \end{enumerate}

\end{theorem*}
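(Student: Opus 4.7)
The plan is to carry out the computation weight by weight on the tensor product $\boldsymbol{V}\otimes_{\C[t]}\boldsymbol{S}_d$. Since $D_d$ is $K$-invariant, it commutes with the diagonal action of $\fk$ on $\boldsymbol{V}\otimes_{\C[t]}\boldsymbol{S}_d$ and therefore preserves the decomposition into $\Delta(\tilde{h})$-weight components. Because $\boldsymbol{S}_d$ has precisely the two weight vectors $1$ and $\tilde{y}$, of weights $\mu_1$ and $\mu_{-1}$ respectively, the component of weight $\mu_k$ is spanned over $\C[t]$ by the at most two elements $v_{k-1}\otimes 1$ and $v_{k+1}\otimes\tilde{y}$, with the convention that a generator is dropped when the corresponding index is not in $\mathcal{K}(\boldsymbol{V})$. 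The whole computation will be performed under the assumption that each $\boldsymbol{V}_n$ is a free $\C[t]$-module of rank one generated by $v_n$, which is implicit in generic irreducibility via the classification recalled in the previous subsection.

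I would then plug these basis elements into the defining expression $D_d=\tfrac{1}{4}\bigl(\tilde{x}\otimes\gamma_d(\tilde{y})+\tilde{y}\otimes\gamma_d(\tilde{x})\bigr)$ and use the Clifford-module formulas $\gamma_d'(\tilde{x})\cdot 1=0$, $\gamma_d'(\tilde{x})\cdot\tilde{y}=4$, $\gamma_d'(\tilde{y})\cdot 1=\tilde{y}$, $\gamma_d'(\tilde{y})\cdot\tilde{y}=0$ together with $\tilde{x}v_n=A_n(t)v_{n+2}$ and $\tilde{y}v_{n+2}=B_n(t)v_n$. This yields $D_d(v_n\otimes 1)=\tfrac{1}{4}A_n(t)\,v_{n+2}\otimes\tilde{y}$ and $D_d(v_n\otimes\tilde{y})=B_{n-2}(t)\,v_{n-2}\otimes 1$, so that in the basis $\{v_{k-1}\otimes 1,v_{k+1}\otimes\tilde{y}\}$ the matrix of $D_d$ on the weight-$\mu_k$ component becomes the antidiagonal
\[
\begin{pmatrix}0 & B_{k-1}(t)\\ \tfrac{1}{4}A_{k-1}(t) & 0\end{pmatrix},
\]
with the appropriate row or column deleted whenever one of the two basis vectors is absent. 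The identity $A_n(t)B_n(t)=2\omega_{\boldsymbol{V}}(t)-\tfrac{t^2}{4}n(n+2)$ shows that $A_n$, respectively $B_n$, vanishes identically precisely at the extreme weights where $v_{n+2}$, respectively $v_n$, is missing from $\boldsymbol{V}$, and is nonzero everywhere else.

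The theorem then reduces to a short case analysis, since $\C[t]$ is an integral domain and the kernel of the above matrix is trivial whenever both $A_{k-1}$ and $B_{k-1}$ are nonzero. If $\mathcal{K}(\boldsymbol{V})$ is unbounded in both directions, which is the principal series case, every weight component has two generators with both $A_{k-1}$ and $B_{k-1}$ nonzero, so $\ker D_d=0$ and $H_{D_d}(\boldsymbol{V})=0$. For $DS_m^+$ the weight-$\mu_{m-1}$ component is generated only by $v_m\otimes\tilde{y}$, which lies in $\ker D_d$ because $v_{m-2}$ is missing, and cannot lie in the image since $D_d$ preserves weights; every other weight component has trivial kernel by the domain argument. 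This yields $H_{D_d}(\boldsymbol{V})\cong\C[t]\{v_m\otimes\tilde{y}\}$. The case $DS_m^-$ is symmetric, and $F_m$ contributes from both extreme weights $\mu_{m+1}$ and $\mu_{-m-1}$, each spanned by a single generator annihilated by $D_d$.

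Finally I would verify Vogan's conjecture by noting that $\fk$ is one-dimensional and hence $\rho_{\fk}=0$, so the $\widetilde{K}$-types appearing in $H_{D_d}(\boldsymbol{V})$ coincide with the predicted $(\mu+\rho_{\fk})^{\ft}_{\fh}=\mu$. Comparing with the infinitesimal character extracted from $\omega_{\boldsymbol{V}}(t)=\tfrac{t^2}{8}(\boldsymbol{\lambda}(\tilde{h})^2-1)$, which gives $\boldsymbol{\lambda}(\tilde{h})=\pm(m-1)$ in the discrete series case and $\pm(m+1)$ in the finite-dimensional case, confirms the expected agreement up to the Weyl group action $\lambda\mapsto-\lambda$. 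There is no serious obstacle in the argument; the only step requiring attention is the bookkeeping of which weight spaces in $\boldsymbol{V}\otimes\boldsymbol{S}_d$ have one rather than two generators, which is precisely where the nonzero Dirac cohomology concentrates.
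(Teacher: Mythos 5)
Your proof is correct and takes essentially the same route as the paper's: decompose $\boldsymbol{V}\otimes_{\C[t]}\boldsymbol{S}_d$ into $\tilde{h}$-weight components, observe that $D_d$ acts on each generically two-dimensional component by the antidiagonal matrix $\bigl(\begin{smallmatrix}0&B_{k-1}\\ \tfrac{1}{4}A_{k-1}&0\end{smallmatrix}\bigr)$ which has trivial kernel over the domain $\C[t]$ whenever $A_{k-1},B_{k-1}\neq 0$ (a consequence of generic irreducibility), and then observe that the Dirac cohomology concentrates in the rank-one ``extreme'' weight components where one of the two generators is absent. The only organizational difference is that you fold the Vogan-conjecture verification into the proof, whereas the paper treats it as a separate corollary; your normalization of $D_d(v_n\otimes\tilde{y})=B_{n-2}(t)v_{n-2}\otimes 1$ is in fact the correct one (the paper carries a harmless extraneous factor of $\tfrac14$ in that line).
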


\begin{proof}
    	Assume that $\mathcal{K}(\boldsymbol{V})=2\Z$ (the $K$-types of spherical principal series). 
        The weights of the $\boldsymbol{\mathfrak{k}}$-module $\boldsymbol{V}\otimes_{\C[t]}\boldsymbol{S}$ are parametrized by all odd integers. 
	The $(2m+1)$-weight subspace  $(\boldsymbol{V}\otimes_{\C[t]}\boldsymbol{S})_{2m+1}$  is a  free rank two $\C[t]$-module with basis 
	$\{v_{2m}\otimes1, v_{2(m+1)}\otimes \tilde{y}\}$. The Dirac operator acts on  $(\boldsymbol{V}\otimes_{\C[t]}\boldsymbol{S})_{2m+1}$ via
\begin{eqnarray}\nonumber   &D_d\big(v_{2m}\otimes1\big)&=\frac{1}{4}A_{2m}(t)v_{2(m+1)}\otimes \tilde{y} \\ \nonumber 
    &D_d\big(v_{2(m+1)}\otimes \tilde{y}\big)&=\frac{1}{4}B_{2m}(t)v_{2m}\otimes 1. 
\end{eqnarray}	
In order to have a non-zero kernel for $D_d$   on   $(\boldsymbol{V}\otimes_{\C[t]}\boldsymbol{S})_{2m+1}$  at least one of the polynomials $A_{2m}(t)$, $B_{2m}(t)$ must be identically zero. This can not be since $\boldsymbol{V}$ is generically irreducible. Hence in this case $H_D(\boldsymbol{V})=0$. A similar argument holds for the case of $\mathcal{K}(\boldsymbol{V})=2\Z+1$ (the $K$-types of non-spherical principal series).

Assume that $\mathcal{K}(\boldsymbol{V})=\mathcal{K}(DS_m^+)=\{m,m+2,.... \}$ for some $m\in \N$.  
The action of the element $\Omega_d$ on the lowest weight vector $v_m$ of $\boldsymbol{V}$ is 
	\begin{equation*}
		\Omega_d v_m=\frac{t^2}{8}m\big(m-2\big)v_m.
	\end{equation*}
	 Hence for every $n\in \mathcal{K}(\boldsymbol{V})$,
\begin{equation}\label{rel1}
	A_n(t)B_n(t)=\frac{t^2}{4}\{m(m-2)-n(n+2)\}.
	\end{equation}
	 
	The weights of the $\boldsymbol{\mathfrak{k}}$-module $\boldsymbol{V}\otimes_{\C[t]}\boldsymbol{S}$ are parametrized by the integers $m-1,m+1,m+3,\ldots$.
    
	For $k\geq 1$, the $(m+2k-1)$-weight subspace $(\boldsymbol{V}\otimes_{\C[t]}\boldsymbol{S})_{m+2k-1}$ is a free $\C[t]$-module of rank two with basis
	$\{v_{m+2(k-1)}\otimes1,v_{m+2k}\otimes \tilde{y}\}$. The $(m-1)$-weight subspace $(\boldsymbol{V}\otimes_{\C[t]}\boldsymbol{S})_{m-1}$ is a free rank one $\C[t]$-module  with basis $\{v_m\otimes \tilde{y}\}$.
   
    For $k\geq1$, the Dirac operator acts on  $(\boldsymbol{V}\otimes_{\C[t]}\boldsymbol{S})_{m+2k-1}$ via
    \begin{eqnarray}\nonumber
    & D_d\big(v_{m+2(k-1)}\otimes1\big)&=\frac{1}{4}A_{m+2(k-1)}(t)v_{m+2k}\otimes \tilde{y}\\ \nonumber
    &  D_d\big(v_{m+2k}\otimes \tilde{y}\big)&=\frac{1}{4}B_{m+2(k-1)}(t)v_{m+2(k-1)}\otimes 1.  
    \end{eqnarray}
 Hence as in the abovementioned case of principal series, on these rank two weight subspaces the kernel of the Dirac operator is trivial.

	On the other hand, $D_d$ annihilates the vector $v_m\otimes \tilde{y}$ so that $\ker D_d=\mathbb{C}[t]\{v_m\otimes \tilde{y}\}$. Clearly  $v_m\otimes \tilde{y}$   does not belong to $\mathrm{im}\hspace{0.5mm}D_{d}$.
	Hence 
	\begin{equation*}
		H_{D_{d}}(\boldsymbol{V})\cong\mathbb{C}[t]\{v_m\otimes \tilde{y}\}.
	\end{equation*}
    The case of $\mathcal{K}(\boldsymbol{V})=\mathcal{K}(DS_m^-)$ and $\mathcal{K}(\boldsymbol{V})=\mathcal{K}(F_m)$ are proven similarly.
\end{proof}
 
\begin{corollary*}
   Let $\boldsymbol{V}$ be a generically irreducible $(\boldsymbol{\mathfrak{g}}_d,K)$-module with nonzero Dirac cohomology. Then $\boldsymbol{V}$ has an infinitesimal character $\boldsymbol{\lambda}\in \operatorname{Hom}_{\C[t]}(\k,\C[t])$
 with respect to the fundamental  Cartan subfamily $\k$.
 
 Moreover, for any highest weight $\mu\in \fk^*$ such that the corresponding  weight space   $H_{D_{d}}(\boldsymbol{V})_{\mu}$ is non-zero  there is $w\in W(\fg,\fk)$ such that 
 \[ \boldsymbol{\lambda}=1\otimes w\cdot \mu \in \C[t]\otimes_{\C} \fk^* =\k^*. \]
\end{corollary*}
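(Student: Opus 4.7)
The plan is to invoke the theorem just proved, which tells us that the modules with nonzero Dirac cohomology are precisely those with $\mathcal{K}(\boldsymbol{V})$ equal to $\mathcal{K}(DS_m^{\pm})$ or $\mathcal{K}(F_m)$, and records the weights of $H_{D_d}(\boldsymbol{V})$ in each case. The strategy has three steps: (i) compute the scalar $\omega_{\boldsymbol{V}}(t)\in\mathbb{C}[t]$ by which $\Omega_d$ acts on $\boldsymbol{V}$; (ii) read off the infinitesimal character $\boldsymbol{\lambda}$ from $\omega_{\boldsymbol{V}}(t)$ via the Harish-Chandra image $\boldsymbol{\operatorname{HC}_{\fk}^{\mathfrak{g}_d}}(\Omega_d)=\tfrac{t^2}{8}(\tilde{h}^2-1)$; and (iii) compare $\boldsymbol{\lambda}$ with the weights of $H_{D_d}(\boldsymbol{V})$ listed in the theorem.

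For step (i), I would apply $\Omega_d=\tfrac{1}{8}t^2\tilde{h}^2+\tfrac{1}{4}\tilde{x}\tilde{y}+\tfrac{1}{4}\tilde{y}\tilde{x}$ to an extremal weight vector, using $[\tilde{x},\tilde{y}]=t^2\tilde{h}$. For $DS_m^+$ the lowest weight vector $v_m$ satisfies $\tilde{y}v_m=0$, and a short computation yields $\omega_{\boldsymbol{V}}(t)=\tfrac{t^2}{8}m(m-2)=\tfrac{t^2}{8}((m-1)^2-1)$. An analogous calculation using a highest weight vector gives the same expression for $DS_m^-$, and yields $\omega_{\boldsymbol{V}}(t)=\tfrac{t^2}{8}m(m+2)=\tfrac{t^2}{8}((m+1)^2-1)$ for $F_m$. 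In every case, $\omega_{\boldsymbol{V}}(t)=\tfrac{t^2}{8}(n^2-1)$ for a specific integer $n$.

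For step (ii), since $\mathcal{Z}(\boldsymbol{\mathfrak{g}}_d)=\mathbb{C}[t][\Omega_d]$, a character on $\mathcal{Z}(\boldsymbol{\mathfrak{g}}_d)$ is determined by its value on $\Omega_d$. The constant functional $\boldsymbol{\lambda}=1\otimes\lambda_0\in\boldsymbol{\mathfrak{k}}^*$ defined by $\lambda_0(\tilde{h})=n$ satisfies $\widehat{\boldsymbol{\lambda}}\circ\boldsymbol{\operatorname{HC}_{\fk}^{\mathfrak{g}_d}}(\Omega_d)=\omega_{\boldsymbol{V}}(t)$. Hence $\boldsymbol{V}$ admits the (constant) infinitesimal character $\boldsymbol{\lambda}$ with respect to $\boldsymbol{\mathfrak{k}}$, proving the first assertion of the corollary.

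For step (iii), the Weyl group $W(\mathfrak{g},\mathfrak{k})=\{1,s\}$ acts on $\mathfrak{k}^*$ by $\mu\mapsto\pm\mu$. In each of the three cases the weights of $H_{D_d}(\boldsymbol{V})$ listed in the theorem evaluate on $\tilde{h}$ to $\pm n$: namely $\mu_{m-1}$ for $DS_m^+$, $\mu_{-m+1}$ for $DS_m^-$, and $\mu_{\pm(m+1)}$ for $F_m$. Consequently $\boldsymbol{\lambda}=1\otimes w\cdot\mu$ for a suitable $w\in W(\mathfrak{g},\mathfrak{k})$, which is exactly the second assertion. Given the preceding theorem, there is no substantive obstacle: the corollary amounts to unpacking the explicit Casimir action and matching it against the listed $\widetilde{K}$-types.
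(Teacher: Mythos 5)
Your proposal is correct and follows essentially the same route as the paper: compute the scalar $\omega_{\boldsymbol{V}}(t)$ by which $\Omega_d$ acts, use the explicit Harish-Chandra image $\boldsymbol{\operatorname{HC}_{\fk}^{\mathfrak{g}_d}}(\Omega_d)=\tfrac{t^2}{8}(\tilde{h}^2-1)$ to read off a (constant) infinitesimal character $\boldsymbol{\lambda}$, and then match the integer $n$ with $\boldsymbol{\lambda}(\tilde{h})=\pm n$ against the Dirac-cohomology weights listed in the theorem. The paper writes out only the case $\mathcal{K}(\boldsymbol{V})=\mathcal{K}(F_m)$ and declares the others similar, whereas you run through all three, but the underlying computation is identical. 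One small thing the paper is slightly more explicit about, which you address only implicitly: since $\mathcal{Z}(\g_d)=\C[t][\Omega_d]$ and $\C[t]$ is an integral domain, the relation $\boldsymbol{\lambda}(\tilde{h})^2=n^2$ forces $\boldsymbol{\lambda}(\tilde{h})=\pm n$, so \emph{every} infinitesimal character of $\boldsymbol{V}$ with respect to $\k$ lies in the single $W(\fg,\fk)$-orbit $\{1\otimes\mu_n,1\otimes\mu_{-n}\}$; your remark that a character of $\mathcal{Z}(\g_d)$ is determined by its value on $\Omega_d$ is the key observation there, and it would be worth spelling out that last step.
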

\begin{proof}
    From the last theorem  we know that there is some $m\in \N_0$, such that 
    $\mathcal{K}(\boldsymbol{V})$ is either $\mathcal{K}(DS_m^+)$ or $ \mathcal{K}(DS_m^-)$ or $\mathcal{K}(F_m)$. We shall consider the case with $\mathcal{K}(\boldsymbol{V})=\mathcal{K}(F_m)$. The other cases are handled similarly. 
    
By direct calculations \begin{equation*}
		\omega_{\boldsymbol{V}}(t)=\frac{t^2}{8}m(m+2)= \frac{t^2}{8}((m+1)^2 -1).
	\end{equation*}
     
    Since  $H_{D_d}(\boldsymbol{V})\cong\mathbb{C}[t]\{v_m\otimes 1,v_{-m}\otimes \tilde{y}\}$, the highest weights that appear in $H_{D_d}(\boldsymbol{V})$ are $\mu_{\pm(m+1)}\in \fk^*$. 
    % which are defined by $\mu_{\pm(m+1)}(h)=\pm(m+1)$.
%\begin{eqnarray}\nonumber
 %  && \omega_{\boldsymbol{V}}(t)=\frac{1}{8}t^2n^2+\frac{1}{4}t^2n+\frac{1}{2}A_n(t)B_n(t).
%\end{eqnarray}
The Weyl group $W(\fg,\fk)= \{1,w_0\}$ is the two-element  group where the non-trivial  element $w_0$ acts via multiplication by $-1$ on $\fk^*$. In particular $w_0\cdot \mu_{\pm(m+1)}=\mu_{\mp(m+1)}$.

Clearly for $\boldsymbol{\lambda}_{\pm}:=1\otimes \mu_{\pm(m+1)}$ we have 
 \[\omega_{\boldsymbol{V}}(t)=\frac{t^2}{8}(\boldsymbol{\lambda}_{\pm}(\tilde{h})^2-1),\]
 and each of the functionals $\boldsymbol{\lambda}_{\pm}$ is a an infinitesimal character of  $\boldsymbol{V}$
 with respect to the fundamental $\theta$-stable Cartan subfamily $\k$.

Any other such infinitesimal character  
    $\boldsymbol{\lambda}\in \operatorname{Hom}_{\C[t]}(\k,\C[t])$ must satisfy $\boldsymbol{\lambda}(\tilde{h})^2=\boldsymbol{\lambda}_{\pm}(\tilde{h})^2$. Hence $\boldsymbol{\lambda}$ must belong to the orbit of $W(\fg,\fk)$ that is given by 
$\{\boldsymbol{\lambda}_{+},\boldsymbol{\lambda}_{-}\}=\{1\otimes \mu_{(m+1)},1\otimes \mu_{-(m+1)}\}$.
 
\end{proof}

\printbibliography
\Addresses
\end{document}